\newtheorem*{Thm}{Theorem}
\newtheorem{thm}{Theorem}[section]
\newtheorem{prop}[thm]{Proposition}
\newtheorem{lem}[thm]{Lemma}
\newtheorem{rem}[thm]{Remark}
\theoremstyle{definition}
\newtheorem{defi}{Definition}[section]
\newcommand{\R}{\mathbb{R}}
\renewcommand{\P}{\mathbb{P}}
\newcommand{\Z}{\mathbb{Z}}
\newcommand{\C}{\mathbb{C}}
\newcommand{\Y}{\mathbb{Y}}
\newcommand{\E}{\mathbb{E}}
\newcommand{\Conf}{\mathrm{Conf}}
\newcommand{\dilog}{\text{dilog}}
\newcommand{\K}{\mathcal{K}}
\newcommand{\T}{\mathbb{T}}
\title{A law of large numbers for local patterns in Schur measures and a Schur process}
\author[P. Lazag]{Pierre Lazag}
\address{CNRS, UMR 7373, \'Ecole centrale de Marseille, Institut de Math\'ematiques de Marseille, Aix-Marseille Universit\'e, Marseille, France}
\address{SISSA, via Bonomea 265, 34136, Trieste, Italy}
\date{}
\begin{document}

\begin{abstract}
The aim of this note is to prove a law of large numbers for local patterns in discrete point processes. We investigate two different situations: a class of point processes on the one dimensional lattice including certain Schur measures, and a model of random plane partitions, introduced by Okounkov and Reshetikhin. The results state in both cases that the linear statistic of a function, weighted by the appearance of a fixed pattern in the random configuration and conveniently normalized, converges to the deterministic integral of that function weighted by the expectation with respect to the limit process of the appearance of the pattern.
\end{abstract}
\maketitle
\section{Introduction}
Our main results, Theorems \ref{thm:schur} and \ref{thm:ppp} below, are weak laws of large numbers for two different but related examples of discrete determinantal point processes: the Schur measures and a model of random plane partitions. The latter is an example of a Schur process, which are two-dimensional and dynamical generalizations of Schur measures. We start by describing a common and general framework encompassing both situations, before moving into details in Sections \ref{sec:introllnschur} and \ref{sec:intropp}. 
\subsection{A general framework}  \label{sec:intro}
\subsubsection{Description of the models}
We let $E$ be the state space. In our examples, we have either $E=\Z$ or $E=\Z \times \frac{1}{2}\Z$, and we simply assume here that $E$ is a discrete (i.e. without accumulation point) countable subset of $\R^d$ for a given integer $d\geq 1$. The probability space is the space of configurations $\Conf(E):= \{0,1\}^E$, equipped with the usual Borel sigma-algebra generated by the cylinders. We identify the set $\Conf(E)$ with the set of all subsets of $E$. A point process on $E$ is a probability measure on $\Conf(E)$. For a finite subset $m \subset E$, called a \emph{pattern}, we define the random variable
\begin{align*}
c_m : \Conf(E) &\rightarrow \R \\
X &\mapsto \left\{ \begin{matrix}
 1 & \text{if $m \subset X$,} \\
 0 & \text{else.}
\end{matrix} \right.
\end{align*}
Observe that by the inclusion/exclusion principle, the law of the random variables $c_m$, $ m \subset E$ is a pattern, characterizes a given point process. For $x \in \R^d$ and a pattern $m = \{m_1,\dots,m_l\}$, we write $m+ x := \{\lfloor m_1 + x \rfloor, \dots, \lfloor m_l + x \rfloor \}$, where $\lfloor y \rfloor$ is one of the closest point to $y \in \R^d$ in $E$. We consider a one parameter family of point processes $(\P_\alpha)_{\alpha >0}$ which admit a local limit behavior: there exists a set $A \subset \R^d$ and a family of point processes $(\P_{\mathcal{S}(u)})_{u \in A}$ such that the following holds
\begin{align} \label{lim:geneloc}
\lim_{\alpha \rightarrow + \infty} \E_{\alpha}  [ c_{ m + \alpha u} ] = \E_{\mathcal{S}(u)}  [ c_m ],
\end{align}
for all $u \in A$ and all finite $m\subset E$, where $\E_\alpha$ (resp. $\E_{\mathcal{S}(u)}$) denotes the expectation with respect to $\P_\alpha$ (resp. $\P_{\mathcal{S}(u)}$). The convergence (\ref{lim:geneloc}) can be interpreted as follows: if, for large $\alpha>0$, we zoom locally around the position $\alpha u$ in a configuration distributed according to $\P_\alpha$, the configuration we see behaves as it were distributed according to $\P_{\mathcal{S}(u)}$.
\subsubsection{The result}
For a compactly supported continuous function $f : \R^d \rightarrow \R$ and a finite subset $m \subset E$, we form the sum
\begin{align} \label{eq:geneempiraverage}
\Sigma ( f, m, \alpha) := \frac{1}{\alpha^d} \sum_{x \in \alpha A \cap E} f \left( \frac{1}{\alpha} x \right) c_{m + x}.
\end{align}
Consider the deterministic integral
\begin{align} \label{eq:geneint}
I(f,m) := \int_{A} f(u) \E_{\mathcal{S}(u)} [c_m] du.
\end{align}
A general Theorem in this context is the following.
\begin{Thm}Assume that for all compact $\mathcal{K} \subset A$, we have
\begin{align} \label{lim:genelocrest}
\alpha^d\sup_{u \in \mathcal{K}} \left| \E_\alpha [c_{m + \alpha u} ] - \E_{\mathcal{S}(u)} [c_m] \right| \rightarrow 0
\end{align}
as $\alpha \rightarrow + \infty$ and that
\begin{align} \label{lim:genevar}
\mathrm{var}_{\alpha} \left(\Sigma ( f, m, \alpha) \right) := \E_{\alpha} (\Sigma ( f, m, \alpha)- \E_{\alpha} \Sigma ( f, m, \alpha))^2  \rightarrow 0
\end{align}
as $\alpha \rightarrow + \infty$. Then for all $\epsilon >0$, we have
\begin{align} \label{thm:gene}
\lim_{ \alpha \rightarrow + \infty} \P_\alpha \left( \left| \Sigma(f,m,\alpha) - I(f,m) \right| > \epsilon \right) = 0.
\end{align}
\end{Thm}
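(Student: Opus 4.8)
The plan is to prove the convergence in probability by the standard route of a second-moment (Chebyshev) argument, after first establishing that the means $\E_\alpha \Sigma(f,m,\alpha)$ converge to the deterministic limit $I(f,m)$. Indeed, for every $\alpha$ we have the deterministic bound
\begin{align*}
\left| \Sigma(f,m,\alpha) - I(f,m) \right| \leq \left| \Sigma(f,m,\alpha) - \E_\alpha \Sigma(f,m,\alpha) \right| + \left| \E_\alpha \Sigma(f,m,\alpha) - I(f,m) \right|.
\end{align*}
If I can show that the second term tends to $0$ as $\alpha \to +\infty$, then for $\alpha$ large enough it is $< \epsilon/2$, so the event $\{ |\Sigma - I| > \epsilon \}$ is contained in $\{ |\Sigma - \E_\alpha \Sigma| > \epsilon/2 \}$. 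Chebyshev's inequality together with hypothesis (\ref{lim:genevar}) then gives
\begin{align*}
\P_\alpha \left( \left| \Sigma(f,m,\alpha) - \E_\alpha \Sigma(f,m,\alpha) \right| > \epsilon/2 \right) \leq \frac{4}{\epsilon^2} \, \mathrm{var}_\alpha \left( \Sigma(f,m,\alpha) \right) \longrightarrow 0,
\end{align*}
which is exactly (\ref{thm:gene}). Thus the whole probabilistic content is carried by the variance hypothesis, and what remains is the deterministic convergence of the means.

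For the mean, linearity of expectation and the definition (\ref{eq:geneempiraverage}) give $\E_\alpha \Sigma(f,m,\alpha) = \alpha^{-d} \sum_{x \in \alpha A \cap E} f(x/\alpha)\, \E_\alpha[c_{m+x}]$. Since $\alpha (x/\alpha) = x \in E$, we have $\E_\alpha[c_{m+x}] = \E_\alpha[c_{m+\alpha u}]$ with $u = x/\alpha$, so I may compare this sum termwise with the Riemann sum built from $\E_{\mathcal{S}(u)}[c_m]$. Because $f$ is compactly supported, only the indices with $x/\alpha \in \mathrm{supp}(f) \cap A$ contribute; I will assume (as is the case in the applications) that these lie in a fixed compact set $\mathcal{K} \subset A$, so that the uniform estimate (\ref{lim:genelocrest}) applies. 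I then write
\begin{align*}
\E_\alpha \Sigma(f,m,\alpha) - I(f,m) = \underbrace{\frac{1}{\alpha^d} \sum_{x \in \alpha A \cap E} f\!\left(\tfrac{x}{\alpha}\right) \left( \E_\alpha[c_{m+x}] - \E_{\mathcal{S}(x/\alpha)}[c_m] \right)}_{(\mathrm{I})} + \underbrace{\left( \frac{1}{\alpha^d}\sum_{x \in \alpha A \cap E} f\!\left(\tfrac{x}{\alpha}\right) \E_{\mathcal{S}(x/\alpha)}[c_m] - I(f,m) \right)}_{(\mathrm{II})},
\end{align*}
splitting the difference into a term governed by the local-limit rate and a pure Riemann-sum term.

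To control $(\mathrm{I})$, I bound $|f| \leq \|f\|_\infty$ and use that the number of contributing lattice points, namely $\# \{ x \in E : x/\alpha \in \mathcal{K} \}$, grows like $O(\alpha^d)$ because $E$ is a discrete subset of $\R^d$ of bounded density. Combined with the uniform bound $\sup_{u \in \mathcal{K}} |\E_\alpha[c_{m+\alpha u}] - \E_{\mathcal{S}(u)}[c_m]| = o(\alpha^{-d})$ supplied by (\ref{lim:genelocrest}), this yields $|(\mathrm{I})| \leq \alpha^{-d} \cdot O(\alpha^d) \cdot \|f\|_\infty \cdot o(\alpha^{-d}) \to 0$. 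For $(\mathrm{II})$, I observe that it is precisely the difference between the integral over $A$ of the bounded, compactly supported function $g(u) := f(u)\, \E_{\mathcal{S}(u)}[c_m]$ and its Riemann sum along the refining lattice $\tfrac{1}{\alpha} E$ (whose mesh tends to $0$); hence $(\mathrm{II}) \to 0$ by convergence of Riemann sums.

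The main obstacle lies in this last step: convergence of the Riemann sum requires $u \mapsto \E_{\mathcal{S}(u)}[c_m]$ to be Riemann integrable on $\mathrm{supp}(f)$ (equivalently, bounded and continuous almost everywhere), a mild regularity property that is not part of the abstract hypotheses but holds in both concrete models. One must also take care that the effective summation region $\mathrm{supp}(f) \cap A$ is contained in a compact subset of $A$, so that (\ref{lim:genelocrest}) is legitimately applicable; any points accumulating near $\partial A$ can be handled separately using the a priori bound $0 \leq \E_{\mathcal{S}(u)}[c_m] \leq 1$ together with the smallness of the measure of a thin boundary neighbourhood on which $f$ is supported.
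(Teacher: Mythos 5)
Your proposal is correct and follows essentially the same route as the paper's own (sketched) proof: use the uniform rate (\ref{lim:genelocrest}) together with the $O(\alpha^d)$ count of lattice points in the support of $f$ to replace $\E_\alpha[c_{m+x}]$ by $\E_{\mathcal{S}(x/\alpha)}[c_m]$ up to $o(1)$, recognize the resulting expression as a Riemann sum for $I(f,m)$, and conclude via Chebyshev's inequality and hypothesis (\ref{lim:genevar}). Your closing remarks on the Riemann integrability of $u \mapsto \E_{\mathcal{S}(u)}[c_m]$ and on keeping the effective summation region in a compact subset of $A$ merely make explicit regularity assumptions the paper leaves implicit in its sketch (and which hold in both of its concrete models), so they are a refinement rather than a deviation.
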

\subsubsection{Scketch of the proof and some comments}
The proof is as follows. From (\ref{lim:geneloc}), we have
\begin{align*}
\E_\alpha \Sigma ( f, m, \alpha) = \frac{1}{\alpha^d} \sum_{x \in \alpha A \cap E } f \left( \frac{1}{\alpha} x \right)\E_{\mathcal{S}(x/\alpha)} [c_m] +o(1)
\end{align*}
where the $o(1)$ term arises from condition (\ref{lim:genelocrest}), since the function $f$ has compact support. One recognizes on the right hand side a Riemann sum for the integral $I(f,m)$. By condition (\ref{lim:genevar}), Chebyshev inequality implies the weak law of large numbers (\ref{thm:gene}).\\

We will prove in great details the same weak law of large numbers for a class Schur measures, Theorem \ref{thm:schur} below, and for a model of random plane partitions, Theorem \ref{thm:ppp} below. The main points will be to prove that conditions (\ref{lim:genelocrest}) and (\ref{lim:genevar}) are satisfied. Our proofs will lie on the fact that both point processes are determinantal point processes, with kernels described by double contour integrals that will reveal to be suitable for asymptotic analysis. We refer to Section \ref{sec:dpp} for the definition of determinantal point processes.\\

Both models we consider are models of random partitions. Other general laws of large numbers for discrete models related to random partitions have been established for example in \cite{bufetovgorin}, but in the case when the pattern $m$ is empty. The latter may thus be seen as global laws of large numbers, while Theorems \ref{thm:schur} and \ref{thm:ppp} below involve a mixture of global and local asymptotic  behaviors.

\subsection{The law of large numbers for Schur measures} \label{sec:introllnschur}
We start in Section \ref{sec:schurmeasuresorigin} by recalling the definition of the Schur measures on the set of partitions, following \cite{okounkovschurmeas}, see also \cite{borodingorin}, \cite{borodinrains} and \cite{johansson} for an other approach. We introduce a one parameter family of symmetric Schur measures and describe their local limit behavior in Section \ref{sec:schurmeaslim}. The law of large numbers is stated in Section \ref{sec:schurmeaslln}.
\subsubsection{The Schur measures on the set of partitions} \label{sec:schurmeasuresorigin}
Let $\Lambda$ be the algebra over $\C$ of symmetric functions, i.e. of symmetric polynomials with an infinite number of variables, see \cite{macdonald}. A partition $\lambda = (\lambda_1 \geq \lambda_2 \geq \dots )$ is an almost-zero sequence of non-negative integers, and is identified with a Young diagram. The set of all partitions is denoted by $\Y$. A distinguished basis in $\Lambda$ is formed by the Schur functions $s_\lambda$, indexed by partitions $\lambda \in \Lambda$. A specialization is an algebra morphism from $\Lambda$ to $\C$. A specialization $\rho$ is said to be \emph{Schur positive} if $\rho(s_\lambda) \geq 0$ for all $\lambda \in \Y$. The Schur measures are defined as follows.
\begin{defi}Let $\rho$, $\rho'$ be Schur positive specializations. The Schur measure $\mathbb{S}_{\rho,\rho'}$ is a probability measure on $\Y$ defined by 
\begin{align*}
\mathbb{S}_{\rho,\rho'}(\lambda)= C_{\rho,\rho'} \rho( s_\lambda) \rho' (s_\lambda), \quad \lambda \in \mathbb{Y}.
\end{align*}
\end{defi}
The normalizing constant $C_{\rho,\rho'}$ is given by the Cauchy formula (see \cite{macdonald})
\begin{multline*}
C_{\rho,\rho'}^{-1}= \sum_{ \lambda \in \Y} \rho (s_\lambda) \rho' (s_\lambda) = \rho \otimes \rho' \left( \prod_{i,j=1}^{+\infty} \frac{1}{1 - \mathrm{x}_i \mathrm{y}_j} \right)\\
= \rho \otimes \rho' \left( \exp \left( \sum_{k \geq 1} \frac{p_k(\mathrm{x}_1,\mathrm{x_2},\dots)p_k(\mathrm{y}_1,\mathrm{y_2},\dots)}{k}\right) \right),
\end{multline*}
where th $p_k$ are the Newton power sums (see \cite{macdonald}).

We assumed that the specializations $\rho$ and $\rho'$ are Schur positive in order to guarantee that the Schur measure $\mathbb{S}_{\rho,\rho'}$ is a positive measure. Observe now that this assumption might be avoided if one takes two complex conjugated specializations, which leads to the following definition.
\begin{defi}Let $\rho$ be a specialization. The \emph{symmetric Schur measure} $\mathbb{S}_\rho$ associated to $\rho$ is the probability measure on $\mathbb{Y}$ defined by
\begin{align*}
\mathbb{S}_{\rho} ( \lambda) = C_{\rho} |\rho ( s_\lambda) |^2, \quad \lambda \in \mathbb{Y}.
\end{align*}
\end{defi}
Again, the normalizing constant is given by the Cauchy formula
\begin{align*}
C_\rho^{-1} = \rho \otimes \overline{\rho}  \left( \prod_{i,j=1}^{+ \infty} \frac{1}{1-\mathrm{x}_i\mathrm{y}_j} \right).
\end{align*}
To a partition $\lambda =(\lambda_1 \geq \lambda_2 \geq \dots) \in \Y$, we associate a configuration $\mathfrak{S}(\lambda) \in \Conf(\Z)$ by
\begin{align*}
\mathfrak{S}(\lambda) = \{ \lambda_i - i, \hspace{0.1cm} i=1,2,\dots \}. 
\end{align*}
See Figure 1 below for a picture of the map $\mathfrak{S}$.
\begin{figure}
\begin{center}
\includegraphics[scale=0.75,  clip=true]{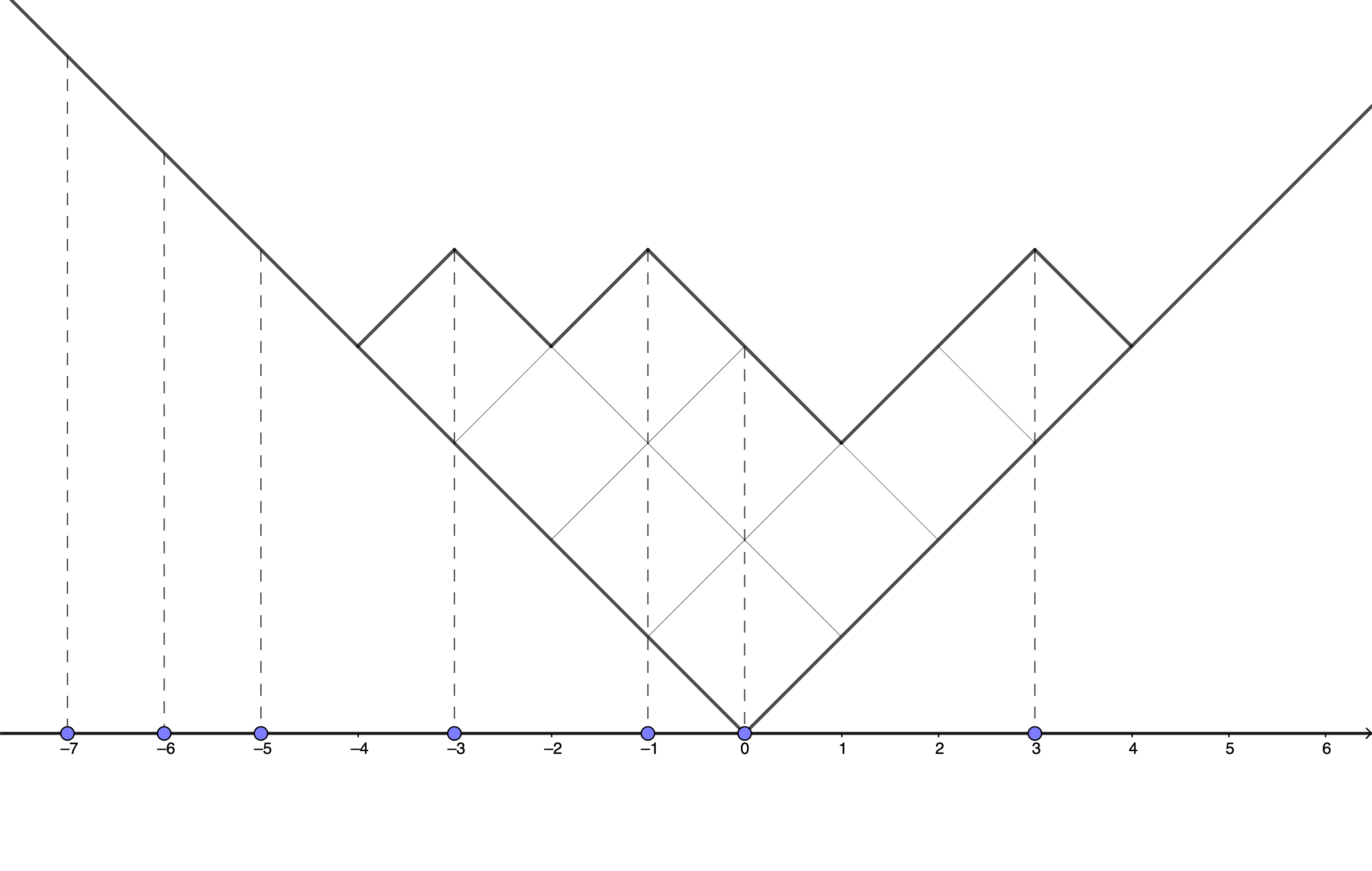}
\caption{The Young diagramm $(4,2,2,1)$
and its associated configuration.}
\end{center}
\end{figure}
Okounkov proved in \cite{okounkovschurmeas} that the image of the Schur measures by the map $\mathfrak{S}$ form determinantal point processes on $\Z$, see also \cite{borodingorin} and \cite{borodinrains} for other proofs. We recall this fact in the following Theorem, for the case of symmetric Schur measures.
\begin{thm}[Okounkov, \cite{okounkovschurmeas}] \label{thm:okounkovschurdet}The image of the symmetric Schur measure $\mathbb{S}_{\rho}$ by $\mathfrak{S}$, denoted by $\P_\rho$, is a determinantal point process on $\Z$. If the series
\begin{align*}
\sum_{k \geq 1} \frac{1}{k} \rho(p_k)z^k
\end{align*}
defines a holomorphic function in a neighborhood of the unit circle $\T := \{ z\in \C, \hspace{0.1cm} |z|=1\}$, then $\P_\rho$ admits the correlation kernel
\begin{multline} \label{eq:kernelschur}
K_\rho(x,y)=\\
= \frac{1}{(2i \pi)^2} \int_{|z|=1+ \varepsilon} \int_{|w|= 1 - \varepsilon} \exp \left(\sum_{k=1}^{+ \infty} \frac{\rho(p_k)}{k}(z^k- w^{k})+\frac{\overline{\rho(p_k)}}{k}(w^{-k}-z^{-k}) \right) \frac{1}{z-w}\frac{dzdw}{z^x w^{-y+1}}, \\
 x,y \in \Z,
\end{multline}
where $\varepsilon>0$ is sufficiently small.
\end{thm}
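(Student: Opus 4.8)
The plan is to realize the symmetric Schur measure inside the fermionic Fock space (the semi-infinite wedge representation) and to compute its correlation functions as vacuum expectations of fermion bilinears, which are determinantal by Wick's theorem. First I would set up the charge-zero sector of $\bigwedge^{\infty/2}$ with its orthonormal basis $\{|\lambda\rangle\}_{\lambda\in\Y}$ indexed by partitions, the fermionic operators $\psi_k,\psi_k^*$, and the vacuum $|\emptyset\rangle$. Under this identification the occupied sites of $|\lambda\rangle$ are precisely the points of $\mathfrak{S}(\lambda)=\{\lambda_i-i\}$ (up to the standard shift by $\tfrac12$ between half-integer modes and the lattice $\Z$), so the pattern indicator $c_{\{x\}}=\mathbf{1}[x\in\mathfrak{S}(\lambda)]$ becomes the number operator $\psi_{x}\psi_{x}^*$. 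I would then introduce the half-vertex operators
\[
\Gamma_+(\rho)=\exp\Big(\sum_{k\geq1}\tfrac{\rho(p_k)}{k}\,\alpha_k\Big),\qquad
\Gamma_-(\rho)=\exp\Big(\sum_{k\geq1}\tfrac{\rho(p_k)}{k}\,\alpha_{-k}\Big),
\]
where the $\alpha_{\pm k}$ are the Heisenberg currents. The boson--fermion correspondence (equivalently the Jacobi--Trudi expansion, see \cite{macdonald}) gives $\Gamma_-(\rho)|\emptyset\rangle=\sum_{\lambda}s_\lambda(\rho)\,|\lambda\rangle$ and, by taking adjoints, $\langle\emptyset|\Gamma_+(\overline\rho)=\sum_\lambda \overline{\rho(s_\lambda)}\,\langle\lambda|$. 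Hence the partition function is $\langle\emptyset|\Gamma_+(\overline\rho)\Gamma_-(\rho)|\emptyset\rangle=\sum_\lambda|\rho(s_\lambda)|^2=C_\rho^{-1}$, which is exactly the Cauchy formula recalled above, and more generally
\[
\E_\rho[c_{\{x_1\}}\cdots c_{\{x_n\}}]=\frac{\langle\emptyset|\Gamma_+(\overline\rho)\,\psi_{x_1}\psi_{x_1}^*\cdots\psi_{x_n}\psi_{x_n}^*\,\Gamma_-(\rho)|\emptyset\rangle}{\langle\emptyset|\Gamma_+(\overline\rho)\Gamma_-(\rho)|\emptyset\rangle}.
\]

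Next I would commute the vertex operators onto the vacua. Using $[\alpha_k,\psi(z)]=z^k\psi(z)$ and the analogous relation for $\psi^*$, conjugation of the fermion fields $\psi(z)=\sum_k\psi_k z^{k}$ and $\psi^*(w)=\sum_k\psi_k^* w^{-k}$ by $\Gamma_\pm$ multiplies them by scalar generating functions. After moving all the $\Gamma$'s through so that they annihilate against the vacua, the numerator becomes a vacuum expectation of a product of free fermions, to which Wick's theorem applies and yields $\det[\widetilde K(x_i,x_j)]_{i,j=1}^n$ for the two-point function $\widetilde K(x,y)=\langle\psi_x\psi_y^*\rangle$. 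This already establishes that $\P_\rho$ is a determinantal point process.

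It remains to extract $\widetilde K$ explicitly. For this I would use the free two-point function $\langle\emptyset|\psi(z)\psi^*(w)|\emptyset\rangle=\tfrac{1}{1-w/z}$, valid for $|z|>|w|$: this is precisely the source of the ordering $|z|=1+\varepsilon>1-\varepsilon=|w|$ in the statement and, after clearing the denominator, of the factor $\tfrac{1}{z-w}$. Conjugation by $\Gamma_+(\overline\rho)$ and $\Gamma_-(\rho)$ contributes the scalar factors $\exp\!\big(\sum_k\tfrac{\rho(p_k)}{k}(z^k-w^k)\big)$ from the positive modes and $\exp\!\big(\sum_k\tfrac{\overline{\rho(p_k)}}{k}(w^{-k}-z^{-k})\big)$ from the negative modes. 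Extracting the coefficients of $z^x$ and $w^{-y+1}$ by the Cauchy integral on the two circles then reproduces exactly the double contour integral \eqref{eq:kernelschur}, with the holomorphy of $\sum_k\tfrac{\rho(p_k)}{k}z^k$ near $\T$ ensuring these exponentials are holomorphic on both contours.

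The algebra above is classical; I expect the genuine difficulty to be analytic rather than combinatorial. One must check that the vertex operators and their infinite products are well defined on a dense domain, that the series in the exponent converge and define holomorphic functions on a common annulus containing both contours (this is exactly where the hypothesis on $\sum_k\rho(p_k)z^k/k$ enters), and that the interchange of the infinite mode sums with the contour integrals is legitimate. To sidestep the unbounded-operator subtleties, the cleanest route is to first prove the identity for specializations with only finitely many nonzero power sums, where every expression is a finite computation, and then pass to the general case by a limiting argument using the holomorphy hypothesis together with dominated convergence on the fixed contours $|z|=1+\varepsilon$, $|w|=1-\varepsilon$. This limiting step, rather than the free-fermion bookkeeping, is the crux of the argument.
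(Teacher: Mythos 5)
Your proposal is correct, and it is essentially Okounkov's original infinite-wedge argument, which is precisely the proof the paper relies on: the theorem is stated with attribution to \cite{okounkovschurmeas} and the paper gives no independent proof of it. Your setup (charge-zero sector, $\Gamma_-(\rho)|\emptyset\rangle=\sum_{\lambda}\rho(s_\lambda)|\lambda\rangle$ with $\Gamma_+(\overline\rho)$ its adjoint, Wick's theorem for the determinantal structure, conjugation of $\psi(z)$, $\psi^*(w)$ by the half-vertex operators, and coefficient extraction from $\frac{1}{1-w/z}$ on the contours $|z|=1+\varepsilon$, $|w|=1-\varepsilon$) matches the cited proof step for step, including the role of the holomorphy hypothesis in justifying the contour formula.
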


\subsubsection{The limit processes} \label{sec:schurmeaslim}
Our aim here is to state a local limit behavior for a certain class of Schur measures we now define. Let $\rho$ be a specialization such that the series
\begin{align*}
\sum_{k \geq 1} \frac{\rho(p_k)z^k}{k}
\end{align*}
defines a holomorphic function in a neighborhood of the unit circle $\T$. Let $G(z)$ be the function defined by
\begin{align} \label{eq:defg}
G(z)= \sum_{k \geq 1} \frac{\rho(p_k)z^k}{k} - \sum_{k \geq 1} \frac{\overline{\rho(p_k)}z^{-k}}{k}.
\end{align}
The function $G$ is holomorphic in a neighborhood of $\T$ and we have $G(z) \in i \R$ for all $z \in \T$. We let $\alpha>0$ be a positive parameter, and define the specialization $\rho_\alpha:= \alpha \rho$. We denote by $\mathbb{S}_\alpha$ the symmetric Schur measure with specialization $\rho_\alpha$ and let $\P_\alpha$ be its image on $\Conf(\Z)$ by the map $\mathfrak{S}$.

We can now state a local limit behavior for the point process $\P_\alpha$, as $\alpha \to + \infty$, Proposition \ref{prop:limschur} below, analogous to the convergence (\ref{lim:geneloc}) and (\ref{lim:genelocrest}).

Observe that $zG'(z) \in \R$ when $z \in \T$. Let  $u_{\min}$ (resp. $u_{\max}$) be the minimum (resp. maximum) of the function $zG'(z)$, $ z \in \T$. For $u \in [u_{\min},u_{\max}]$, the set
\begin{align*}
\{z \in \T, \hspace{0.1cm} zG'(z) \geq u \}
\end{align*}
consists of a finite number $L=L(u)$ of arcs $[e^{i\phi_k(u)},e^{i\psi_k(u)} ] \subset \T$, $k=1,\dots,L(u)$.  For $u \in [u_{\min},u_{\max}]$, we define the kernel
\begin{align} \label{eq:defsu}
\mathcal{S}(u)(x,y)&=\sum_{k=1}^{L(u)} \frac{e^{i\phi_k(u)(y-x)}-e^{i\psi_k(u)(y-x)}}{2i\pi(x-y)}, \quad x,y \in \Z, \hspace{0.1cm} x \neq y \\
\mathcal{S}(u)(x,x)&= \sum_{k=1}^{L(u)}\frac{\phi_k(u)-\psi_k(u)}{2\pi}, \quad x \in \Z
\end{align}
and let $\P_{\mathcal{S}(u)}$ be the determinantal point process with correlation kernel $\mathcal{S}(u)$. The local limit behavior (\ref{lim:geneloc}) together with the estimate (\ref{lim:genelocrest}) for the Schur measures $\P_\alpha$ are provided by the following Proposition. We denote by $\E_\alpha$ the expectation with respect to $\P_\alpha$, and $\E_{\mathcal{S}(u)}$ stands for the expectation with respect to $\P_{\mathcal{S}(u)}$. 
\begin{prop}\label{prop:limschur} For all $\delta \in (0,1)$ and all pattern $m\subset \Z$, there exists $C>0$ such that for all $u \in [u_{\min},u_{\max}]$,
\begin{align} \label{limprop:limschur}
\left| \E_\alpha [c_{m + \alpha u} ] - \E_{\mathcal{S}(u)} [c_m ] \right| \leq C \exp \left( - \alpha^\delta \right)
\end{align}
for all sufficiently large $\alpha>0$.
\end{prop}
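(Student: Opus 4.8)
The plan is to reduce the statement to a single entrywise estimate on the correlation kernels and then to read off the limit kernel $\mathcal{S}(u)$ from the double contour integral \eqref{eq:kernelschur} by a contour deformation: the main term $\mathcal{S}(u)$ will appear as a residue at $z=w$, while the leftover integral will be exponentially small by a steepest descent argument. First I would fix a pattern $m=\{p_1<\dots<p_l\}$, set $n:=\lfloor \alpha u\rfloor$ (so that $m+\alpha u=\{p_1+n,\dots,p_l+n\}$ since $p_i\in\Z$), and use that $\P_\alpha$ (Theorem \ref{thm:okounkovschurdet}) and $\P_{\mathcal S(u)}$ are determinantal to write $\E_\alpha[c_{m+\alpha u}]=\det\big(K_\alpha(p_i+n,p_j+n)\big)_{i,j}$ and $\E_{\mathcal S(u)}[c_m]=\det\big(\mathcal S(u)(p_i,p_j)\big)_{i,j}$, where $K_\alpha$ is \eqref{eq:kernelschur} with $\rho$ replaced by $\rho_\alpha=\alpha\rho$. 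Both kernels are uniformly bounded (the limit kernel \eqref{eq:defsu} is of projection type), so by multilinearity of the determinant it suffices to prove the uniform entrywise bound $\sup_{u}\max_{i,j}\big|K_\alpha(p_i+n,p_j+n)-\mathcal S(u)(p_i,p_j)\big|\le C e^{-\alpha^\delta}$.

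Next I would put both objects in a comparable analytic form. Substituting $\alpha\rho$ into \eqref{eq:kernelschur}, writing $z^{-n}=e^{-n\log z}$ (single valued for $n\in\Z$), using the function $G$ of \eqref{eq:defg}, and setting $\nu:=n/\alpha$ (so $|\nu-u|\le 1/\alpha$), the shifted kernel becomes
\[
K_\alpha(x+n,y+n)=\frac{1}{(2i\pi)^2}\int_{|z|=1+\varepsilon}\int_{|w|=1-\varepsilon}e^{\alpha(S_\nu(z)-S_\nu(w))}\,\frac{1}{z-w}\,\frac{dz\,dw}{z^{x}w^{-y+1}},\qquad S_\nu(z):=G(z)-\nu\log z .
\]
On the other side, each arc integral in \eqref{eq:defsu} is a Fourier coefficient, so with $\mathcal A(u):=\{z\in\T:\ zG'(z)\ge u\}$ one has $\mathcal S(u)(x,y)=\frac{1}{2i\pi}\int_{\mathcal A(u)}\frac{dw}{w^{x-y+1}}$.

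The core step is the steepest descent analysis of $S_\nu$. Since $G(\T)\subset i\R$, I have $\mathrm{Re}\,S_\nu\equiv 0$ on $\T$, and $\partial_r\,\mathrm{Re}\,S_\nu(re^{i\theta})\big|_{r=1}=zG'(z)-\nu$, which is real on $\T$; hence just outside $\T$ one has $\mathrm{Re}\,S_\nu>0$ exactly on the arcs $\{zG'(z)>\nu\}$, whose endpoints are precisely the saddle points $S_\nu'(z)=0$. I would then deform the contours to distance $\varepsilon_\alpha:=\alpha^{(\delta-1)/2}$ from $\T$, pushing the $z$-contour inside $\T$ over $\mathcal A(u)$ and outside over the complement, and doing the opposite for $w$, joining the pieces through the arc endpoints. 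Over $\mathcal A(u)$ the $z$-contour crosses the $w$-contour, and the pole $z=w$ contributes a residue equal to $\frac{1}{2i\pi}\int_{\mathcal A(u)}\frac{dw}{w^{x-y+1}}=\mathcal S(u)(x,y)$. On the deformed contours $\mathrm{Re}(S_\nu(z)-S_\nu(w))\le -c\,\varepsilon_\alpha$ away from the endpoints, giving $e^{-c\alpha\varepsilon_\alpha}=e^{-c\alpha^{(1+\delta)/2}}$, while in the windows of size $\varepsilon_\alpha$ around the endpoints the quadratic behaviour $\mathrm{Re}(S_\nu(z)-S_\nu(z_c))\sim -|S_\nu''(z_c)|\,|z-z_c|^2$ yields $e^{-c\alpha\varepsilon_\alpha^2}=e^{-c\alpha^{\delta}}$; both dominate $e^{-\alpha^\delta}$. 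The discrepancy $|\nu-u|\le 1/\alpha$ only shifts the saddles by $O(1/\alpha)$ and changes $S_\nu$ by $(u-\nu)\log z=O(\varepsilon_\alpha/\alpha)$ on the contours, which is negligible in the exponent; this is why the residue may be taken over $\mathcal A(u)$ rather than $\mathcal A(\nu)$ and still equals $\mathcal S(u)$ exactly.

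The hard part will be twofold. Near each arc endpoint the pole $z=w$ collides with a saddle point, so the two leftover contours nearly pinch there and $\frac{1}{z-w}$ is singular; this coincidence is exactly the mechanism producing the sine-type kernel, and it must be controlled by keeping the contours transverse and separated at scale $\varepsilon_\alpha$ inside the windows. The genuine obstacle, however, is \emph{uniformity in $u$ over the closed interval} $[u_{\min},u_{\max}]$: at the spectral edges $u=u_{\min},u_{\max}$, and at the values where the number of arcs $L(u)$ changes, consecutive saddles coalesce and $S_\nu''(z_c)$ degenerates, so the quadratic steepest descent estimate breaks down (one enters an Airy regime governed by $S_\nu'''$). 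In those windows the arcs shrink and both $K_\alpha$ and $\mathcal S(u)$ are small, so I expect them to be dispatched by separate, cruder cubic-order bounds, but arranging the contour geometry and the constant $C$ to remain uniform through these coalescences is where the main work lies; away from them the estimate is routine steepest descent.
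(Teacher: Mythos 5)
Your route is, in outline, exactly the paper's own proof: reduce to an entrywise kernel estimate via the determinantal identity and multilinearity, rewrite the shifted kernel as the double contour integral \eqref{eq:intkernasympschur} with phase $S_u(z)=G(z)-u\log z$, deform the two circles off $\T$ according to the sign of $zG'(z)-u$, pick up the residue at $z=w$ over the arcs $\{z\in\T,\ zG'(z)\ge u\}$ --- which is exactly $\mathcal{S}(u)(x,y)$, as in \eqref{eq:defsu} --- and argue that the leftover double integral is $O(e^{-\alpha^{\delta}})$ uniformly in $u$ by compactness. Two of your refinements actually go beyond the paper: you track the integer-shift discrepancy $\nu=\lfloor\alpha u\rfloor/\alpha$ versus $u$ (the paper silently treats $\alpha u$ as an integer and runs the whole argument with $S_u$), and your residue bookkeeping (residue $w^{y-x-1}$, arc integral $\frac{1}{2\pi}\int e^{i\theta(y-x)}d\theta$) is the internally consistent version of the paper's computation, which carries an immaterial off-by-one in the exponents.

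There is, however, a genuine gap in your quantitative claim for the windows around the arc endpoints, and it is precisely the point where the paper itself is thinnest. For the residue term to equal $\mathcal{S}(u)$ exactly, both deformed contours must pass through the critical points $e^{i\phi_k(u)},e^{i\psi_k(u)}$; at those crossing points $\Re\left(S_\nu(z)-S_\nu(w)\right)=0$, so there is no exponential decay at the pinch, and the local contribution is controlled only by the integrable singularity $1/|z-w|$ (transversal crossing) against a Gaussian factor --- a bound of polynomial order in $\alpha^{-1}$ (of size $O(\alpha^{-1/2})$ before symmetry cancellations), not $e^{-c\alpha\varepsilon_\alpha^{2}}=e^{-c\alpha^{\delta}}$ as you assert. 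Your proposed fix, keeping the contours ``separated at scale $\varepsilon_\alpha$'' inside the windows, destroys the exact residue identity on which the whole argument rests: the endpoints of the swap region then move by $O(\varepsilon_\alpha)$, so the residue term differs from $\mathcal{S}(u)(x,y)$ by $O(\varepsilon_\alpha)$, again polynomial. Neither variant yields the stated $Ce^{-\alpha^{\delta}}$ across the saddle windows. You should be aware that the paper disposes of this step by pinning the contours at the critical points and invoking dominated convergence to get \eqref{eq:intwithoutresidueschur}; but the natural dominating function $\sup_{\alpha}e^{\alpha^{\delta}-\alpha c(z,w)}/|z-w|$, with $c(z,w):=-\Re\left(S_u(z)-S_u(w)\right)$ vanishing quadratically at the crossings, is not integrable there, and the subsequent one-line compactness claim does not by itself give uniformity in $u$ --- in particular not through the coalescence of saddles at $u_{\min},u_{\max}$ and at values where $L(u)$ jumps, which you rightly single out. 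In short, your proposal reproduces the paper's argument while being more candid about its two delicate points; the pole--saddle interaction is the one you leave genuinely unresolved, and it cannot be resolved by quadratic decay or contour separation alone.
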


Observe that if we have $L(u)=1$, the kernel $\mathcal{S}(u)$ defined in Equation (\ref{eq:defsu}) above reads
\begin{align*}
\mathcal{S}(u)(x,y) = \frac{e^{i\phi(u)(y-x)} - e^{i \psi(u) (y-x) }}{2i \pi(x-y)} = e^{i\frac{1}{2}(\phi(u) + \psi(u))(y-x)} \frac{\sin\left( \frac{\psi(u)-\phi(u)}{2}(x-y) \right)}{\pi(x-y)}.
\end{align*}
Since the factor  $e^{i\frac{1}{2}(\phi(u) + \psi(u))(y-x)}$ can be ignored (see Remark \ref{rem:gaugetransform}), the point process $\P_\alpha$ converges then to a version of the discrete sine process (see e.g. \cite{boo}, \cite{borodingorin}).\\

The class of Schur measures $\mathbb{S}_\alpha$ we considere here above are also discussed by Okounkov in the notes \cite{okounkovuses}. The limit Theorem given by Proposition \ref{prop:limschur} is also stated in the same paper \cite{okounkovuses} without the error term, and a proof is sketched. We give a complete detailed proof of Proposition \ref{prop:limschur} in Section \ref{sec:schurmeaslim} below.\\

Taking $G(z)=z - z^{-1}$ in equation (\ref{eq:defg}) above, one obtains the discrete Bessel point process, the corresponding Schur measure being the poissonized Plancherel measure (see \cite{boo}, \cite{borodinbessel}, \cite{johanssonplancherel} or \cite{borodingorin}). In particular, Proposition \ref{prop:limschur} is in that case the convergence of the discrete Bessel point process to the discrete sine process, first established in \cite{boo}.\\

In Section \ref{sec:rkshift}, we give an interpretation of the symmetric Schur measures and of the limit kernels $\mathcal{S}(u)$ in terms of shift invariant subspaces of $l^2(\Z)$. Namely, we prove that, up to minor transformations, the correlation kernels of symmetric Schur measures are projection kernels onto simply shift invariant subspaces, while the kernels $S(u)$ are projection kernels onto doubly shift invariant subspaces. Such an interpretation is also given in \cite{bufetovolshanski}. Proposition \ref{prop:limschur} provides a connection between both kinds of invariant subspaces.

\subsubsection{The law of large numbers} \label{sec:schurmeaslln}
As before, we set
\begin{align*}
\Sigma(f,m,\alpha)=\frac{1}{\alpha}\sum_{x \in [\alpha u_{\min},\alpha u_{\max}] \cap \Z} f \left( \frac{x}{\alpha} \right) c_{m+x}
\end{align*}
where $f :\R \rightarrow \R$ is a continuous function and $ m \subset \Z$ is a pattern. We also define the integral
\begin{align*}
I(f,m)= \int_{u_{\min}}^{u_{\max}} f(u) \E_{\mathcal{S}(u)} [c_m] du.
\end{align*}
The law of large numbers for Schur measures is the following Theorem.
\begin{thm}\label{thm:schur} For any continuous function $f : \R \rightarrow \R$ and any pattern $m \subset \Z$, we have for all $\epsilon >0$ that
\begin{align} \label{lim:llnschur}
\lim_{\alpha \rightarrow +\infty} \P_{\alpha} \left( \left| \Sigma(f,m,\alpha) - I(f,m) \right| > \epsilon \right) = 0.
\end{align}
\end{thm}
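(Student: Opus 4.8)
The plan is to deduce Theorem \ref{thm:schur} from the abstract law of large numbers of Section \ref{sec:intro}, specialized to $d=1$, $E=\Z$, $A=[u_{\min},u_{\max}]$ and the limit kernels $\mathcal{S}(u)$: it suffices to verify hypotheses (\ref{lim:genelocrest}) and (\ref{lim:genevar}), after which the conclusion (\ref{thm:gene}) is exactly (\ref{lim:llnschur}). Since $f$ is continuous and the sum defining $\Sigma(f,m,\alpha)$ only sees the arguments $x/\alpha\in[u_{\min},u_{\max}]$, I may replace $f$ by a compactly supported continuous function agreeing with it on this compact interval, so that $f$ is bounded; write $\|f\|_\infty$ for its sup norm. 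Hypothesis (\ref{lim:genelocrest}) is then immediate from Proposition \ref{prop:limschur}: the bound $|\E_\alpha[c_{m+\alpha u}]-\E_{\mathcal{S}(u)}[c_m]|\le Ce^{-\alpha^\delta}$ holds uniformly on all of $[u_{\min},u_{\max}]$, so $\alpha^d=\alpha$ times it still tends to $0$. Combined with the Riemann-sum argument from the sketch in Section \ref{sec:intro}—using that $u\mapsto\E_{\mathcal{S}(u)}[c_m]$ is bounded and continuous off the finitely many $u$ where the number of arcs $L(u)$ jumps, hence Riemann integrable—this already yields $\E_\alpha\Sigma(f,m,\alpha)\to I(f,m)$.

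The core of the work is the variance estimate (\ref{lim:genevar}). Writing the sums over $[\alpha u_{\min},\alpha u_{\max}]\cap\Z$,
\[
\mathrm{var}_\alpha(\Sigma(f,m,\alpha))=\frac{1}{\alpha^2}\sum_{x,y}f(x/\alpha)f(y/\alpha)\,\mathrm{Cov}_\alpha(c_{m+x},c_{m+y}),
\]
and I would exploit the determinantal structure of $\P_\alpha$ (Theorem \ref{thm:okounkovschurdet}) to bound each covariance by the squared magnitude of the kernel entries linking the two shifted patterns. Let $l=|m|$ and, for a finite $S\subset\Z$, let $K_{\rho_\alpha}[S]$ denote the matrix $(K_{\rho_\alpha}(s,s'))_{s,s'\in S}$. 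Since $c_{m+x}c_{m+y}=c_{(m+x)\cup(m+y)}$, whenever $m+x$ and $m+y$ are disjoint one has
\[
\mathrm{Cov}_\alpha(c_{m+x},c_{m+y})=\det K_{\rho_\alpha}[(m+x)\cup(m+y)]-\det K_{\rho_\alpha}[m+x]\,\det K_{\rho_\alpha}[m+y].
\]
Expanding the first determinant over permutations of the $2l$-element set $(m+x)\cup(m+y)$, the permutations preserving the two-block partition exactly reconstruct $\det K_{\rho_\alpha}[m+x]\det K_{\rho_\alpha}[m+y]$; any other permutation moves an element across blocks and, being a bijection, moves one back, so its term contains at least two block-crossing factors, one of the form $K_{\rho_\alpha}(m_i+x,m_j+y)$ and one of the form $K_{\rho_\alpha}(m_{i'}+y,m_{j'}+x)$. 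Using the entrywise bound $|K_{\rho_\alpha}|\le1$ on the remaining factors and Hermitian symmetry on the second crossing factor, this gives the uniform bound
\[
|\mathrm{Cov}_\alpha(c_{m+x},c_{m+y})|\le (2l)!\,\max_{1\le i,j\le l}|K_{\rho_\alpha}(m_i+x,m_j+y)|^2 .
\]

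The decisive step is then an $\ell^2$ summation requiring no pointwise asymptotics of the kernel. As a determinantal correlation kernel, $K_{\rho_\alpha}$ is a Hermitian operator on $\ell^2(\Z)$ with $0\le K_{\rho_\alpha}\le1$ (indeed an orthogonal projection, see Section \ref{sec:rkshift}); hence $K_{\rho_\alpha}^2\le K_{\rho_\alpha}$ and, for every $a\in\Z$,
\[
\sum_{b\in\Z}|K_{\rho_\alpha}(a,b)|^2=(K_{\rho_\alpha}^2)(a,a)\le K_{\rho_\alpha}(a,a)\le1,
\]
uniformly in $\alpha$. Overestimating $\max_{i,j}$ by $\sum_{i,j}$ and extending the inner $y$-sum to all of $\Z$,
\[
\sum_{x,y}|\mathrm{Cov}_\alpha(c_{m+x},c_{m+y})|\le(2l)!\sum_{i,j}\sum_{x}\Big(\sum_{y}|K_{\rho_\alpha}(m_i+x,m_j+y)|^2\Big)\le(2l)!\,l^2\sum_{x}1=O(\alpha),
\]
because each inner $y$-sum is at most $K_{\rho_\alpha}(m_i+x,m_i+x)\le1$ and there are $O(\alpha)$ values of $x$. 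The excluded pairs with $m+x$ and $m+y$ overlapping satisfy $x-y\in\{m_j-m_i\}$, so there are only $O(\alpha)$ of them, each with $|\mathrm{Cov}_\alpha|\le1$, hence harmless. Altogether $\mathrm{var}_\alpha(\Sigma(f,m,\alpha))\le\|f\|_\infty^2\,\alpha^{-2}\,O(\alpha)=O(1/\alpha)\to0$, which is (\ref{lim:genevar}), and Chebyshev's inequality then yields (\ref{lim:llnschur}) as in the abstract sketch.

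I expect the genuine analytic obstacle to lie entirely in Proposition \ref{prop:limschur}, whose double-contour steepest-descent proof furnishes the uniform exponential control behind (\ref{lim:genelocrest}); the variance step above is by contrast \emph{soft}, using only $|K_{\rho_\alpha}|\le1$ and the operator inequality $0\le K_{\rho_\alpha}\le1$, both automatic and uniform in $\alpha$, so that no delicate pointwise decay of $K_{\rho_\alpha}(a,b)$ is needed. Should one wish to avoid invoking the projection/operator structure, the same $O(1/\alpha)$ bound is reachable by proving $|K_{\rho_\alpha}(a,b)|\le C/(1+|a-b|)$ uniformly in the bulk via steepest descent on (\ref{eq:kernelschur}); that route is more laborious, and securing its uniformity up to the spectral edges is where the real care would be required.
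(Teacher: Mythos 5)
Your proof is correct, and while it shares the paper's skeleton (convergence of $\E_\alpha\Sigma(f,m,\alpha)$ to $I(f,m)$ via Proposition \ref{prop:limschur} and a Riemann sum, then Chebyshev), your variance control takes a genuinely different route. The paper splits $A_\alpha^2$ into pairs with $|x-y|>\overline{m}$ and $|x-y|\leq\overline{m}$ and invokes two dedicated decorrelation lemmas: Lemma \ref{lem:varschur1}, proved by a second steepest-descent analysis of the double-contour formula giving $|K_\alpha(x+\alpha u_1,y+\alpha u_2)|\leq C/(\alpha|u_1-u_2|)$, and Lemma \ref{lem:varschur2}, proved via continuity of $u\mapsto\mathcal{S}(u)(x,y)$; this yields $\mathrm{var}_\alpha\Sigma(f,m,\alpha)=O(\log\alpha/\alpha)$. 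You instead keep only the combinatorial ingredient (your block-crossing permutation expansion is exactly the paper's Proposition \ref{prop:covdet}) and replace all pointwise kernel asymptotics by the soft operator facts $0\leq K_{\rho_\alpha}\leq 1$ and $\sum_b|K_{\rho_\alpha}(a,b)|^2=(K_{\rho_\alpha}^2)(a,a)\leq 1$ — legitimate here, since Proposition \ref{prop:shiftschur} identifies $K_{\rho_\alpha}$, up to the reflection $x\mapsto -x$, as the Hermitian kernel of an orthogonal projection under the standing holomorphy assumption on $G$. This is shorter, needs no uniformity-up-to-the-edge care, and even gives the sharper rate $O(1/\alpha)$. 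What the paper's harder route buys in exchange is twofold: the explicit spatial decay $C/(\alpha|u_1-u_2|)$ of covariances is quantitative information of independent interest beyond what the law of large numbers needs, and — more structurally — the method survives when the correlation kernel is \emph{not} Hermitian, which is precisely the situation for the plane-partition kernel $K_q$ in Theorem \ref{thm:ppp}, where your projection argument is unavailable and the paper must (and does) run the contour-deformation estimates of Lemma \ref{lemdecK} through Proposition \ref{prop:covdet}. So your proof is a valid and more economical treatment of Theorem \ref{thm:schur} specifically, but it does not generalize to the second half of the paper, which explains the paper's choice of strategy; your closing remark correctly locates the only genuinely hard analysis in Proposition \ref{prop:limschur}.
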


Theorem \ref{thm:schur} was first established for the discrete Bessel point process in \cite[Lem. 4.4]{bufetovgafa}, by means of simple estimations of the Bessel functions. The goal of the author in \cite{bufetovgafa} was to prove the Vershik-Kerov entropy conjecture for the Plancherel measure, and it would be intersting to see if our result could serve to define the entropy of more general Schur measures.

\subsection{The law of large numbers for plane partitions} \label{sec:intropp}

We now prepare the statement of the law of large numbers for plane partitions, Theorem \ref{thm:ppp} below. We present the model in Section \ref{sec:intromodelpp}, and give Okounkov-Reshtikhin determinantal formula in Section \ref{sec:ppdpp}. The limit processes are presented in Section \ref{sec:limpp}, and the law of large numbers is stated in Section \ref{sec:llnpp}.

\subsubsection{Introduction of the model} \label{sec:intromodelpp}
 A plane partition is a double non-increasing sequence of non-negative integers whith a finite number of non-zero elements. More precisely, $\pi=(\pi_{i,j})_{i,j \geq 1} \in \Z_{ \geq 0}^{\Z_{>0} \times \Z_{>0}} $ is a plane partition if and only if 
\begin{align*}
\pi_{i+1,j} & \leq \pi_{i,j} \text{ and } \pi_{i,j+1} \leq \pi_{i,j} \text{ for all } i,j \in \Z_{>0}, \\
|\pi| & := \sum_{i,j \geq 1} \pi_{i,j} < +\infty.
\end{align*}

For $q \in (0,1)$, we consider the geometric probability measure $\tilde{\mathbb{P}}_q$ on the set of all plane partition given by
\begin{align*}
\tilde{\mathbb{P}}_q(\pi) = M q^{|\pi|},
\end{align*}
where $M$ is the normalization constant given by MacMahon formula (\cite{stanley}, corollary 7.20.3) 
\begin{align*}
M=\prod_{n=1}^{+\infty}(1-q^n)^n.
\end{align*}
To a plane partition we associate a subset of $E:=\Z\times \frac{1}{2}\Z$ via the map 
\begin{align*}
\pi \mapsto \mathfrak{S}_2(\pi):= \{ (i-j,\pi_{i,j}-(i+j-1)/2), \hspace{0.1cm} i,j \geq 1 \},
\end{align*}
see Figure 2.
The first coordinate of a point $(t,h) \in E$ might be interpreted as the time coordinate, and the second as the space coordinate.\\

The point process we consider, denoted by $\P_q$, is the image of $\tilde{\P}_q$ under $\mathfrak{S}_2$
\begin{align*}
\P_q(\mathcal{B}) = \tilde{\P}_q(\mathfrak{S}_2^{-1}(\mathcal{B})), \quad \mathcal{B} \subset \{0,1\}^E \text{ is a Borel set}.
\end{align*}
\begin{figure}
\begin{center}
\includegraphics[scale=0.6, trim = 1cm 1.3cm 1cm 0cm, clip=true]{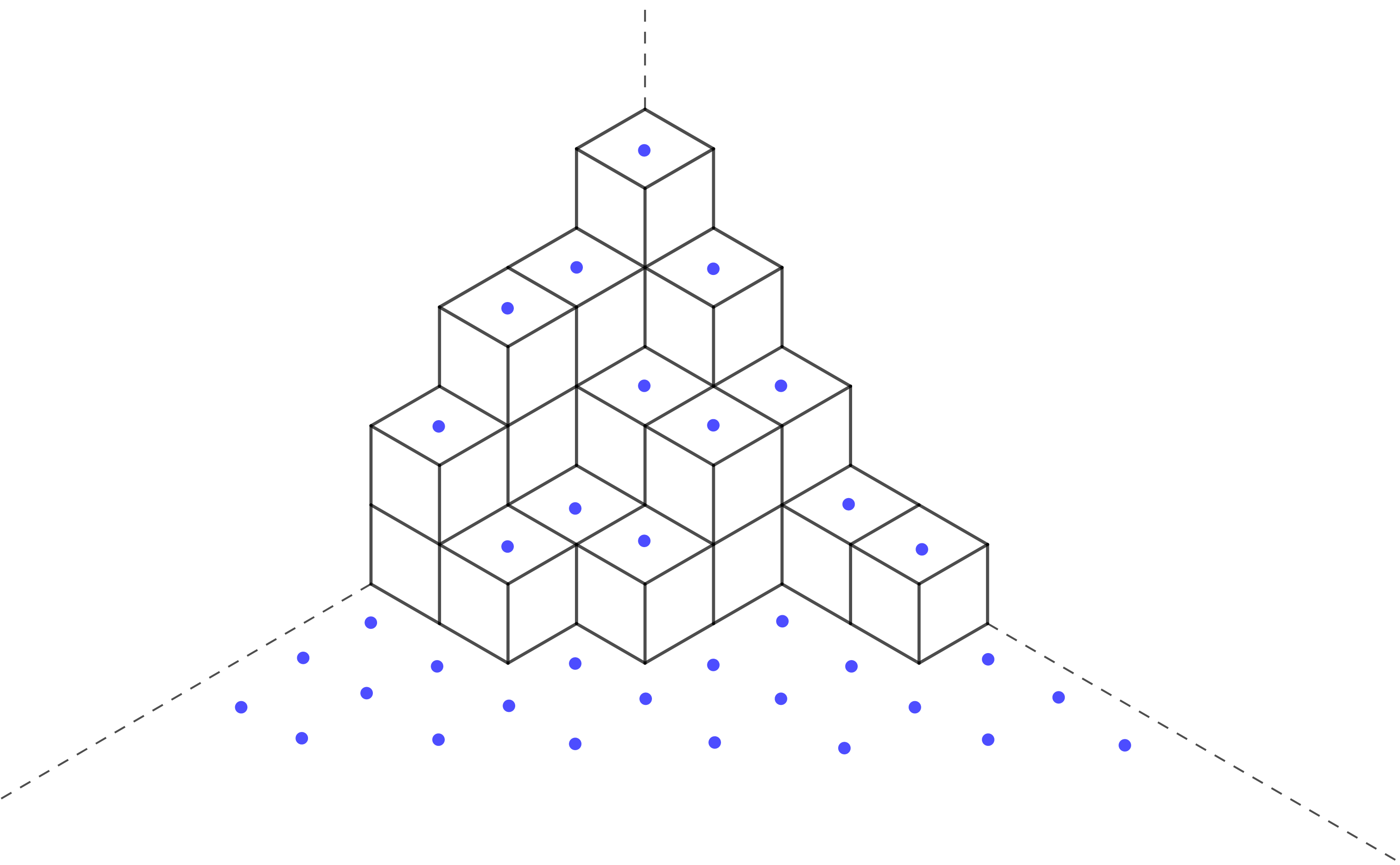}
\caption{The plane partition $\begin{pmatrix}
4 & 3 & 2 & 1 & 1 \\
3 & 2 & 2 & & \\
3 & 1 & 1 & & \\
2 & 1 & & &
\end{pmatrix}$ 
and its associated configuration.}
\end{center}
\end{figure}
\subsubsection{The determinantal formula}\label{sec:ppdpp}
Okounkov and Reshetikhin have shown in \cite{okounkovreshetikhin} that  $\mathbb{P}_q$ is a determinantal point procces on $E$. We define the kernel $K_q : E \times E \rightarrow \mathbb{R}$ by 
\begin{align} \label{defKq}
K_{q}(t_1,h_1;t_2,h_2)= \frac{1}{(2i\pi)^{2}}\int_{|z|=1 \pm \epsilon}\int_{|w|=1 \mp \epsilon} \frac{1}{z-w} \frac{\Phi(t_1,z)}{\Phi(t_2,w)}\frac{dz dw}{z^{h_1+\frac{|t_1|+1}{2}}w^{-h_2-\frac{|t_2|+1}{2}}}
\end{align}
where one picks the plus sign for $t_1 \geq t_2$ and the minus sign otherwise. The function $\Phi$ is defined by 
\begin{align*}
\Phi(t,z)= &\frac{(q^{1/2}/z;q)_\infty}{(q^{1/2+t}z;q)_\infty} \quad \text{for $t \geq 0$} \\
&\frac{(q^{1/2-t}/z;q)_\infty}{(q^{1/2}z;q)_\infty} \quad \text{for $t < 0$},
\end{align*}
where $(x;q)_\infty$ is a $q$ version of the Pochhammer symbol 
\begin{align*}
(x;q)_{\infty}=\prod_{k=0}^{\infty}(1-xq^{k}),
\end{align*}
and $\varepsilon$ is a sufficiently small positive number, which allows to avoid the singularities of the ratio 
\begin{align*}
\frac{\Phi(t_1,z)}{\Phi(t_2,w)}.
\end{align*}
Observe that there is no need of defining the square root in formula (\ref{defKq}), since, for $(t,h) \in E$, if there exists a plane partition $\pi$ such that $(t,h) \in \mathfrak{S}_2(\pi)$, then we have by construction that 
\begin{align*}
h+ \frac{|t|+1}{2} \in \Z.
\end{align*}
Okounkov-Reshetikhin determinantal formula is then the following statement 
\begin{thm}[Okounkov-Reshetikhin, \cite{okounkovreshetikhin}, 2003] \label{thmokresh} The point process $\P_q$ is a determinantal point process on $E$ with correlation kernel $K_q$. 
\end{thm}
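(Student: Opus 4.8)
The plan is to realize the random plane partition as a Schur process and then to exploit the free-fermion (Fock space) formalism, in which both the determinantal structure and the explicit form of the kernel follow from Wick's theorem together with a vertex-operator computation. First I would slice each plane partition $\pi$ along its diagonals. Letting $\lambda^{(t)}$ denote the ordinary partition read off the $t$-th diagonal of $\pi$, the defining inequalities $\pi_{i+1,j}\le\pi_{i,j}$ and $\pi_{i,j+1}\le\pi_{i,j}$ translate into the interlacing conditions $\cdots\prec\lambda^{(-1)}\prec\lambda^{(0)}\succ\lambda^{(1)}\succ\cdots$, while the total volume decomposes as $|\pi|=\sum_{t\in\Z}|\lambda^{(t)}|$. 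Consequently $\tilde{\P}_q(\pi)=M\prod_{t}q^{|\lambda^{(t)}|}$ is exactly a Schur process: a measure on interlacing sequences of partitions in which each consecutive step is weighted by a skew Schur function specialized at a single geometric variable depending on $q^{|t|}$. Under $\mathfrak{S}_2$, the point $(t,h)$ records the position of a particle in the slice $\lambda^{(t)}$, so that a configuration in $\Conf(E)$ encodes the particle positions of all diagonal slices simultaneously.

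Next I would pass to the bosonic/fermionic Fock space. Each partition $\lambda$ is encoded as a charge-zero semi-infinite wedge whose occupied sites are precisely those recording $\lambda$, and the interlacing weights are produced by the half-vertex operators $\Gamma_+(z)=\exp(\sum_{k\ge 1}\frac{z^k}{k}\alpha_k)$ and $\Gamma_-(z)=\exp(\sum_{k\ge 1}\frac{z^{-k}}{k}\alpha_{-k})$, which act on partition states by summing over horizontal strips. The essential algebraic input is the commutation relation $\Gamma_+(z)\Gamma_-(w)=(1-z/w)^{-1}\Gamma_-(w)\Gamma_+(z)$. Writing the Schur-process measure as a vacuum-to-vacuum matrix element of the form $M\langle\emptyset|\cdots\Gamma_+\cdots\Gamma_-\cdots|\emptyset\rangle$, the normalization $M^{-1}$ is recovered by commuting all $\Gamma_+$ past all $\Gamma_-$ and reproducing MacMahon's product $\prod_{n}(1-q^n)^{-n}$.

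Then I would compute the correlation functions. The probability that a finite set $\{(t_1,h_1),\dots,(t_n,h_n)\}$ is occupied equals a vacuum expectation of the corresponding fermionic number operators inserted among the vertex operators at the appropriate time slices. Since the $\Gamma_\pm$ are exponentials of bilinears in the fermions, conjugating the fermion fields by them preserves the free-fermion algebra, so Wick's theorem applies and the $n$-point function collapses to $\det[K_q(t_i,h_i;t_j,h_j)]_{i,j=1}^n$; this is precisely the determinantal property. It remains to identify the two-point function $K_q(t_1,h_1;t_2,h_2)=\langle\psi_{h_1}(t_1)\psi^*_{h_2}(t_2)\rangle$. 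Representing the fermion fields as generating series $\psi(z)=\sum_m\psi_m z^m$ and $\psi^*(w)=\sum_m\psi^*_m w^{-m}$ and commuting them through the product of vertex operators, each $\Gamma_\pm$ contributes a scalar factor; the accumulated product telescopes into the function $\Phi(t,z)$ expressed as a $q$-Pochhammer ratio, while the bare fermion correlator supplies $\frac{1}{z-w}$ together with the monomials $z^{-h_1-(|t_1|+1)/2}$ and $w^{h_2+(|t_2|+1)/2}$. The ordering of the operators forces the contours $|z|=1\pm\epsilon$, $|w|=1\mp\epsilon$, with the sign determined by whether $t_1\ge t_2$, exactly as in (\ref{defKq}).

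The main obstacle is this last step: organizing the infinite product of vertex-operator contributions so that it assembles into the precise Pochhammer symbols defining $\Phi(t,z)$, and tracking the normal-ordering and convergence constraints that fix the relative position of the two contours. Getting the geometric specialization parameters $q^{1/2+t}$ versus $q^{1/2-t}$ correct in the two regimes $t\ge 0$ and $t<0$, and verifying that the resulting series converge on the chosen contours, is where the genuine computation lies; by contrast the determinantal structure itself is a soft consequence of the free-fermion formalism.
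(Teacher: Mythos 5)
The paper does not prove this theorem itself---it quotes it from \cite{okounkovreshetikhin}---and your sketch correctly reconstructs the original Okounkov--Reshetikhin argument: diagonal slicing of the plane partition into an interlacing sequence of partitions forming a Schur process, transfer to the charge-zero fermionic Fock space via the half-vertex operators $\Gamma_\pm$, the determinantal structure from Wick's theorem, and the kernel $K_q$ with its ratio of functions $\Phi$ and time-ordered contours obtained by commuting the fermion generating series through the $\Gamma_\pm$. This is essentially the same approach as the cited source, and the computational core you flag (assembling the $q$-Pochhammer factors of $\Phi(t,z)$ and fixing the contour ordering according to $t_1 \geq t_2$ versus $t_1 < t_2$) is exactly where the detail lies in \cite{okounkovreshetikhin}.
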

The measure $\tilde{\P}_q$ is a particular case of a Schur process. Schur processes, first introduced in \cite{okounkovreshetikhin}, are dynamical generalizations of Schur measures in the sense that they form a Markov process on the set of partitions such that their marginal distributions are Schur measures. For a more elementary treatment of Schur processes, see e.g. \cite{borodinrains}, \cite{borodingorin} and references therein.
\subsubsection{The limit processes} \label{sec:limpp}
In the same article, Okounkov and Reshetikhin proved a scaling limit theorem for $\mathbb{P}_q$, when $r=-\log(q)$ tends to $0^+$, which we now formulate. Let us define 
\begin{align*}A:= \lbrace (t,x) \in \mathbb{R}^{2} , |2\cosh(t/2)-e^{-x}| < 2 \rbrace  \subset \mathbb{R}^2,
\end{align*}
and for $(\tau,\chi) \in A$, let $z(\tau,\chi)$ be the intersection point of the circles $C(0,e^{-\tau/2})$ and $C(1,e^{-\tau/4-\chi/2})$ with positive imaginary part, see Figure 3.
\begin{figure}
\includegraphics[scale=0.5,clip=true,trim=0cm 0cm 0cm 0cm]{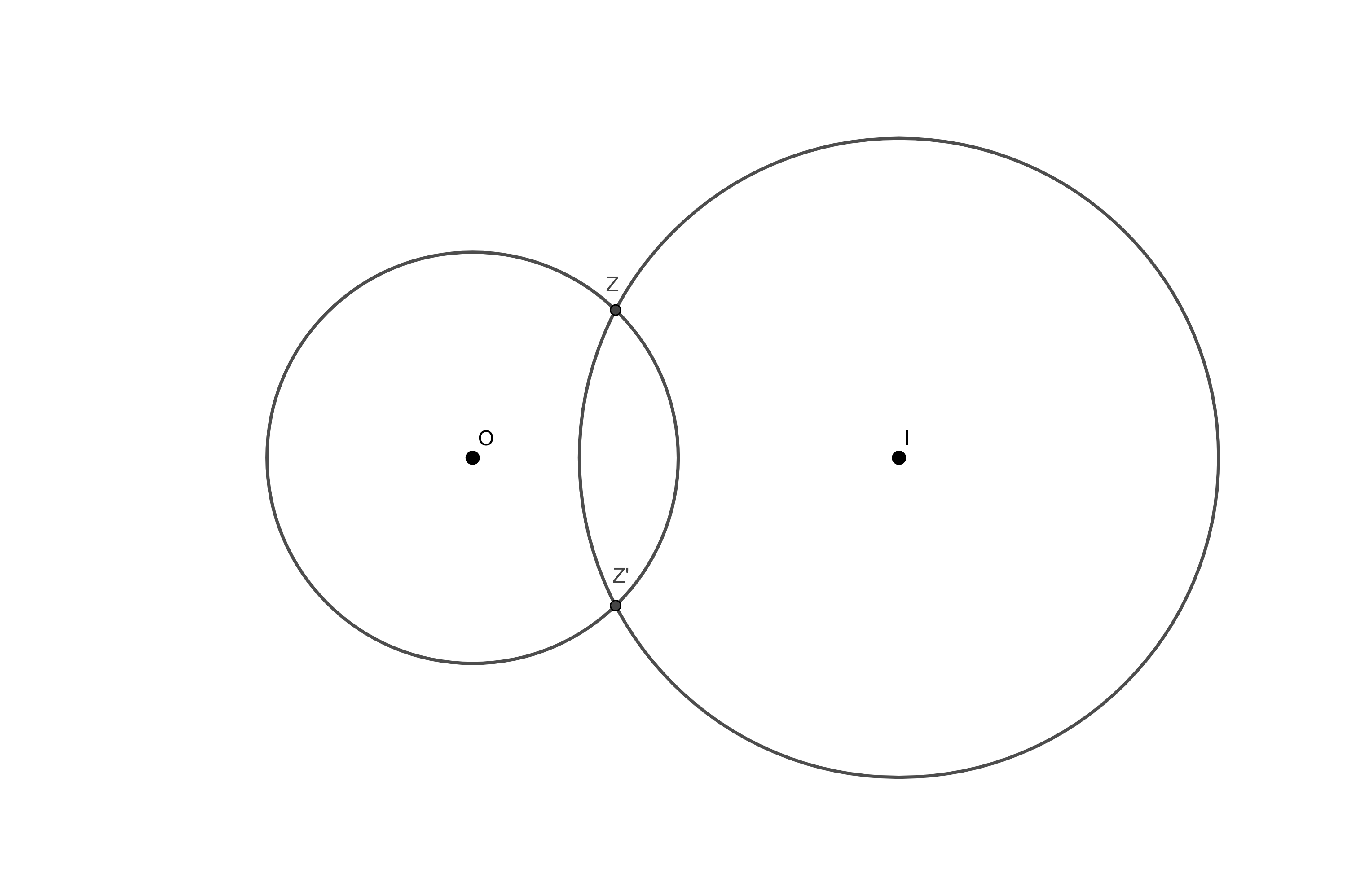}
\caption{The circles $C(0,e^{-\tau/2})$ and $C(1,e^{-\tau/4-\chi/2})$ and their intersection points $z=z(\tau,\chi)$ and its complex conjugate $z'=\overline{z(\tau,\chi)}$.}
\end{figure}
The condition $(\tau,\chi) \in A$ guarantees that $z(\tau,\chi)$ exists and is not real. For $(\tau,\chi) \in A$, we define the translation invariant kernel $\mathcal{S}_{z(\tau,\chi)} : E \rightarrow \mathbb{C}$
\begin{align} \label{defdynsin}
\mathcal{S}_{z(\tau,\chi)}(\Delta t, \Delta h)=\frac{1}{2i\pi}\int_{\overline{z(\tau,\chi)}}^{z(\tau,\chi)}(1-w)^{\Delta t}w^{-\Delta h - \frac{\Delta t}{2}} \frac{dw}{w},
\end{align}
where the integration path crosses $(0,1)$ for $\Delta t \geq 0$ and $(-\infty,0)$ for $\Delta t < 0$. For reasons explained below, this kernel will be called the \textit{extended sine kernel}. Then, the following holds
\begin{thm} [Okounkov-Reshetikhin, \cite{okounkovreshetikhin}] \label{thmcvokresh} For all $(\tau, \chi) \in A$ and all pattern $m=\{ (t_1,h_1),...,(t_l,h_l) \} \subset E$, we have 
\begin{align*}
\lim_{r\rightarrow 0}  \E_{e^{-r}} \left[c_{\frac{1}{r}(\tau,\chi)+m } \right]= \det \left( \mathcal{S}_{z(\tau,\chi)}(t_i-t_j,h_i-h_j) \right)_{i,j=1}^l.
\end{align*}
\end{thm}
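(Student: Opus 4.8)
The plan is to exploit the determinantal structure to reduce the statement to an \emph{entrywise} convergence of the correlation kernel, and then to establish that convergence by a steepest descent analysis of the double contour integral (\ref{defKq}). Since $\P_q$ is determinantal with kernel $K_q$ (Theorem \ref{thmokresh}), writing $p_a := \frac{1}{r}(\tau,\chi) + (t_a,h_a)$ for $a=1,\dots,l$ we have
\[
\E_{e^{-r}}\!\left[c_{\frac{1}{r}(\tau,\chi)+m}\right] = \det\!\left(K_{e^{-r}}(p_a,p_b)\right)_{a,b=1}^l .
\]
Because a determinant depends continuously on its finitely many entries, it suffices to prove that each entry $K_{e^{-r}}(p_a,p_b)$ converges, as $r\to0$, to $\mathcal{S}_{z(\tau,\chi)}(t_a-t_b,h_a-h_b)$ up to a gauge factor of the form $g(p_a)/g(p_b)$, since such factors cancel in the determinant (cf. Remark \ref{rem:gaugetransform}). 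The lattice rounding implicit in $\frac{1}{r}(\tau,\chi)$ perturbs the macroscopic position only by $O(r)$ and the offsets by bounded amounts, so it is harmless.

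To set up the asymptotics I would write $q=e^{-r}$ and use the dilogarithm asymptotics of the $q$-Pochhammer symbol: as $r\to0$, uniformly for $x$ in compact subsets of $\C\setminus[1,\infty)$,
\[
\log(x;q)_\infty = -\frac{1}{r}\,\mathrm{Li}_2(x) + O(\log(1/r)),
\]
which follows from Euler--Maclaurin applied to $\sum_{k\geq0}\log(1-xe^{-rk})$. Substituting the scaled coordinates into (\ref{defKq}) exhibits the integrand as $\exp\!\big(\tfrac{1}{r}(f(z)-f(w))\big)$ times a bounded local factor and $\tfrac{1}{z-w}$, where
\[
f(z)=\mathrm{Li}_2(e^{-\tau}z)-\mathrm{Li}_2(1/z)-(\chi+\tfrac{\tau}{2})\log z ,
\]
and where the finite offsets $t_a,h_a$ survive in the combined residue local factor $(1-e^{-\tau}z)^{\Delta t}z^{-\Delta h-\Delta t/2}$ with $\Delta t=t_a-t_b$, $\Delta h=h_a-h_b$. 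The saddle equation $f'(z)=0$ reads $(1-1/z)(1-e^{-\tau}z)=e^{-\chi-\tau/2}$, a quadratic whose two roots are complex conjugate precisely when $(\tau,\chi)\in A$; a direct check then shows that $e^{-\tau}z$ lies simultaneously on $C(0,e^{-\tau/2})$ and $C(1,e^{-\tau/4-\chi/2})$, identifying the saddles with $e^{\tau}z(\tau,\chi)$ and $e^{\tau}\overline{z(\tau,\chi)}$.

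Next I would deform the $z$-contour onto the steepest-descent path of $\mathrm{Re}\,f$ through both saddles and the $w$-contour onto the steepest-ascent path. Since the original contours $|z|=1\pm\epsilon$, $|w|=1\mp\epsilon$ are nested according to the sign of $t_1-t_2$, sweeping them onto the common saddle contour forces a crossing of the pole $z=w$ along the arc joining the two saddles, and the resulting residue contributes a single integral. On the locus $z=w$ the exponent $f(z)-f(w)$ vanishes identically, so this term carries no exponential weight and equals
\[
\frac{1}{2i\pi}\int (1-e^{-\tau}w)^{\Delta t}\,w^{-\Delta h-\Delta t/2}\,\frac{dw}{w}
\]
along the arc between the saddles, while the leftover double integral over the deformed, non-crossing contours is $O(e^{-c/r})$ because $\mathrm{Re}(f(z)-f(w))<0$ strictly off the saddles. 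The substitution $w\mapsto e^{\tau}w$ turns $(1-e^{-\tau}w)^{\Delta t}$ into $(1-w)^{\Delta t}$, moves the endpoints to $\overline{z(\tau,\chi)}$ and $z(\tau,\chi)$, and yields exactly $\mathcal{S}_{z(\tau,\chi)}(\Delta t,\Delta h)$ times the gauge factor $e^{-\tau(h_a+t_a/2)}/e^{-\tau(h_b+t_b/2)}$, which cancels in the determinant; the path convention (crossing $(0,1)$ for $\Delta t\geq0$ and $(-\infty,0)$ for $\Delta t<0$) matches the $\pm\epsilon$ choice in (\ref{defKq}).

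The main obstacle is the global steepest descent rather than its local form. The genuinely delicate points are: (i) making the dilogarithm asymptotics uniform on the contours with a controlled error; (ii) constructing the steepest-descent and steepest-ascent contours through the conjugate saddles and verifying $\mathrm{Re}(f(z)-f(w))<0$ \emph{globally}, not merely near the saddles, since this is exactly what legitimizes discarding the residual double integral; and (iii) the bookkeeping of the residue together with the sign of $\tau$ and the two cases $t_1\geq t_2$ and $t_1<t_2$, so that the arc, its orientation, and the integration-path convention in (\ref{defdynsin}) come out correctly. A minor additional nuisance is that $\Phi$ has separate definitions for $t\geq0$ and $t<0$, so the asymptotic input must be recomputed in each regime, although the resulting action $f$ and its saddles agree.
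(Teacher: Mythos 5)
Your proposal is correct and follows essentially the same route as the paper: the paper's proof of Proposition \ref{prop:cvpp} in Section \ref{sec:proofproplimpp} (which implies Theorem \ref{thmcvokresh} with the rate $O(r)$) uses exactly your ingredients --- reduction to entrywise kernel convergence, the dilogarithm asymptotics of $(x;q)_\infty$, the action $S(z;\tau,\chi)$ (your $f$), the conjugate saddles $e^{\tau}z(\tau,\chi)$ and $e^{\tau}\overline{z(\tau,\chi)}$ on the circle $\gamma_\tau$, deformation onto $\gamma_\tau^<$ and $\gamma_\tau^>$ with the residue at $z=w$ yielding the arc integral that becomes $\mathcal{S}_{z(\tau,\chi)}$ after $w \mapsto e^{\tau}w$, and removal of the resulting gauge factor via Remark \ref{rem:gaugetransform}, with the case $\tau<0$ handled by the modified action $\tilde{S}$. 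The only deviations are cosmetic (your $O(\log(1/r))$ error in the Pochhammer expansion versus the paper's $O(1)$, and a sign convention in the gauge factor), and neither affects the argument.
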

For $(\tau, \chi) \in A$, the kernel $\mathcal{S}_{z(\tau,\chi)}$ defines a determinantal point process $\P_{(\tau,\chi)}$ on $E$ by
\begin{align} \label{sindynproc}
\forall m=\{ (m^1_1,m^2_1), ..., (m^1_l,m^2_l) \} \subset E, \quad \E_{(\tau,\chi)} [c_m] = \det\left( \mathcal{S}_{z(\tau,\chi)}(m_i^1-m_j^1,m_i^2-m_j^2)\right)_{i,j=1}^l,
\end{align}
where $\E_{(\tau,\chi)}$ is the expectation with respect to $\P_{(\tau,\chi)}$. This point process can be seen as a two-dimensional or dynamical version of the usual discrete sine-process on $\Z$ (see e.g. \cite{boo} or \cite{borodingorin}). Indeed, setting $\Delta t =0$ in (\ref{defdynsin}) leads to 
\begin{align*}
\mathcal{S}_{z(\tau,\chi)}(0,\Delta h) = e^{\frac{\tau \Delta h}{2}} \frac{\sin(\phi\Delta h)}{\pi\Delta h}
\end{align*}
where $z(\tau,\chi)= e^{-\frac{\tau}{2}+i\phi}$. Note that we can ignore the factor $e^{\frac{\tau \Delta h}{2}}$, see Remark \ref{rem:gaugetransform}.\\

We give the speed of convergence in Theorem \ref{thmcvokresh} above in the following Proposition.
\begin{prop} \label{prop:cvpp} For any compact $\mathcal{K} \subset \R^2$, and any pattern $m \subset E$, there exists $C>0$, such that for all $r>0$ sufficiently small and all $(\tau,\chi) \in A \cap \mathcal{K}$, one has 
\begin{align}
|\mathbb{E}_{e^{-r}}[c_{\frac{1}{r}(\tau,\chi)+m}]-\mathbb{E}_{(\tau,\chi)}[c_m]| \leq Cr.
\end{align}
\end{prop}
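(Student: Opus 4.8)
The plan is to establish the rate of convergence in Theorem \ref{thmcvokresh} by a careful steepest-descent (saddle-point) analysis of the double contour integral defining the kernel $K_q$. Since both $\E_{e^{-r}}[c_{\frac{1}{r}(\tau,\chi)+m}]$ and $\E_{(\tau,\chi)}[c_m]$ are finite determinants built from the respective kernels (of fixed size $l \times l$, where $l = |m|$), and the determinant is a smooth (polynomial) function of its entries, it suffices to prove the entrywise estimate
\begin{align} \label{eq:planentrywise}
\left| K_{e^{-r}}\!\left(\tfrac{1}{r}(\tau,\chi)+m_i,\ \tfrac{1}{r}(\tau,\chi)+m_j\right) - \mathcal{S}_{z(\tau,\chi)}(m_i^1 - m_j^1,\, m_i^2 - m_j^2) \right| \leq C r
\end{align}
uniformly for $(\tau,\chi) \in A \cap \mathcal{K}$, and then propagate it through the multilinear determinant expansion; because the matrix entries are uniformly bounded on the compact set, the $O(r)$ bound on entries yields an $O(r)$ bound on the two determinants, hence on the difference of expectations. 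So the whole problem reduces to the single-kernel estimate \eqref{eq:planentrywise}.

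First I would substitute the scaling $(t_k, h_k) = \frac{1}{r}(\tau,\chi) + m_k$ into the integrand of \eqref{defKq} and use the known asymptotics of the $q$-Pochhammer symbols as $r = -\log q \to 0^+$: with $q = e^{-r}$, one has the classical expansion $\log (x;q)_\infty = \frac{1}{r}\,\mathrm{dilog}(x) + O(1)$ (the Euler/dilogarithm asymptotics), where the error is controlled uniformly away from the singularities. This turns the ratio $\Phi(t_1,z)/\Phi(t_2,w)$ into $\exp\left(\frac{1}{r} S(z,w; \tau,\chi) + O(1)\right)$ for an explicit action $S$ whose leading critical structure is governed by the dilogarithm. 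The point $z(\tau,\chi)$ defined as the circle intersection should emerge precisely as the critical point of the phase $S$, and the condition $(\tau,\chi) \in A$ should guarantee that this critical point is nonreal, so the two contours $|z| = 1\pm\epsilon$ can be deformed to pass through $z(\tau,\chi)$ and $\overline{z(\tau,\chi)}$ in the standard steepest-descent configuration. The residue picked up when deforming one contour across the other through the pole at $z = w$ produces exactly the contour integral \eqref{defdynsin}, while the remaining double integral is exponentially small.

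The key steps, in order, are: (i) insert the scaling and take logarithms to isolate the $\frac{1}{r}$-order phase; (ii) locate the saddle point and verify it coincides with $z(\tau,\chi)$, checking that $(\tau,\chi)\in A$ places it off the real axis with the correct steepest-descent directions; (iii) deform the $z$ and $w$ contours to the steepest-descent paths, collecting the residue at $z=w$, which recovers the single integral $\mathcal{S}_{z(\tau,\chi)}$ as the leading term; (iv) bound the leftover double integral by $O(\exp(-c/r))$ and, crucially, control the subleading correction to the single residue integral to show it contributes at most $O(r)$. I expect the main obstacle to be step (iv): obtaining the sharp $O(r)$ rate rather than merely $o(1)$ requires tracking the $O(1)$ correction terms in the dilogarithm asymptotics of $(x;q)_\infty$ (the Euler–Maclaurin next-order term, essentially a $\log$ of a product) and showing they perturb the limiting contour integral by a quantity of order $r$, all \emph{uniformly} as $(\tau,\chi)$ ranges over the compact $\mathcal{K}$ and as the saddle $z(\tau,\chi)$ is allowed to approach (but, by compactness and the open condition defining $A\cap\mathcal{K}$, stays bounded away from) the real axis or the unit circle. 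Maintaining uniformity of all the $q$-Pochhammer asymptotics and of the steepest-descent error bounds as the saddle varies over $\mathcal{K}$ will be the most delicate bookkeeping; the finite-size determinant reduction at the start is what makes the scalar $O(r)$ estimate suffice.
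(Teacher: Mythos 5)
Your proposal is essentially the paper's own proof: the same dilogarithm asymptotics $-\log(z;q)_\infty = r^{-1}\mathrm{dilog}(1-z)+O(1)$ for the $q$-Pochhammer symbols, the same phase $S(z;\tau,\chi)$ with constant real part on the circle $|z|=e^{\tau/2}$ and complex-conjugate critical points $e^{\tau}z(\tau,\chi)$, $e^{\tau}\overline{z(\tau,\chi)}$, the same steepest-descent deformation leaving an $O(\exp(-r^{-\delta}))$ double integral plus the residue term at $z=w$ that becomes the extended sine kernel, and the same implicit reduction to an entrywise kernel bound via the polynomial dependence of the fixed-size determinants. Two remarks on the details: your entrywise estimate is literally false as stated, since the scaled kernel converges to $e^{-\tau(\Delta h-\Delta t/2)}\,\mathcal{S}_{z(\tau,\chi)}(\Delta t,\Delta h)$ rather than to $\mathcal{S}_{z(\tau,\chi)}$ itself, so a gauge conjugation must be inserted before the determinant expansion (harmless by Remark \ref{rem:gaugetransform}, which is exactly how the paper disposes of this factor); and the sharp $O(r)$ in your step (iv) does not in fact require tracking next-order corrections in the dilogarithm expansion — the paper keeps the residue term \emph{exact}, where the Pochhammer ratio $(q^{1/2+\tau/r+t_2}w;q)_\infty/(q^{1/2+\tau/r+t_1}w;q)_\infty$ is essentially a finite product equal to $(1+O(r))(1-e^{-\tau}w)^{t_1-t_2}$ uniformly on the compact, while the $O(1)$ terms of the dilog expansion multiply only the exponentially small double integral and hence need only be bounded, not expanded to order $r$.
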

\begin{rem}We chose here and all along the the rest of the paper that concerns this model of random plane partitions to use the notation of \cite{okounkovreshetikhin}. In order to connect with the notation for the parameters used in the general framework of Section \ref{sec:intro}, we shall set
\[ \alpha := \frac{1}{r} = -\frac{1}{\log q} \]
and $u:=(\tau,\chi)$.
\end{rem}
The extended sine kernel appears in many other models as the kernel of the bulk scaling limit of two dimensional statistical mechanics models, for example non-intersecting paths (see e.g. \cite{gorinpaths} and references therein). It also has a continuous counter-part arising in the Dyson's brownian motion model (see e.g. \cite{katoritanemura}).

A similar model of random plane partitions has also been considered in \cite{ferrarispohn}, and related models of random skew plane partitions and their scaling limits have been studied for example in \cite{okounkovreshetikhin2}, \cite{boutilliermrktchyan}, \cite{mrktchyan} and \cite{mrktchyanpetrov}.
\subsubsection{The law of large numbers}\label{sec:llnpp}
For $r >0$, we define the set $A_{r} \subset E$ by 
\begin{align*}
A_{r}= r^{-1}A  \cap E.
\end{align*}
For brievety, we write $\P_r$ (resp. $\E_r$) instead of $\P_{e^{-r}}$ (resp. $\E_{e^{-r}}$). For a continuous compactly supported function $f : \R^2 \rightarrow \R$, and a pattern $m \subset E$, we form the empirical average
\begin{align} \label{sum}
\Sigma(f,m,r)=r^2 \sum_{(t,h) \in A_{r} }f(rt,rh)c_{(t,h)+m},
\end{align}
and we define the deterministic integral 
\begin{align*}
I(f,m) = \int_{A} f(\tau,\chi)\E_{(\tau,\chi)} [ c_m ] d\tau d\chi.
\end{align*}
Our Theorem establishes that, under $\P_r$, the sum $\Sigma(f,m,r)$ converges to $I(f,m)$.
\begin{thm} \label{thm:ppp}For every continuous compactly supported function $f: \mathbb{R}^2 \rightarrow \mathbb{R}$, every pattern $m \subset E$, one has 
\begin{align}
\forall \epsilon >0, \quad \lim_{r \rightarrow 0} \mathbb{P}_{r} \left( |\Sigma(f,m,r) - I(f,m) | > \epsilon \right) =0.
\end{align}
\end{thm}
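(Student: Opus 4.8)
The plan is to verify the two hypotheses of the general Theorem stated in the introduction, namely the uniform pointwise convergence estimate \eqref{lim:genelocrest} and the variance estimate \eqref{lim:genevar}, specialized to the plane-partition setting with $d=2$, $\alpha=1/r$, and $u=(\tau,\chi)$. The first hypothesis is already supplied by Proposition \ref{prop:cvpp}: indeed, since $f$ is continuous and compactly supported, there is a compact $\mathcal{K}\subset\R^2$ containing the support of $f$, and for $(\tau,\chi)\in A\cap\mathcal{K}$ Proposition \ref{prop:cvpp} gives $|\E_r[c_{r^{-1}(\tau,\chi)+m}]-\E_{(\tau,\chi)}[c_m]|\leq Cr$, so that $r^{-2}$ times the supremum is $O(r^{-1}\cdot r^{-1}\cdot r)$... here I must be careful, because the general Theorem requires $\alpha^d\sup|\cdots|\to 0$, i.e. $r^{-2}\cdot Cr=Cr^{-1}\to\infty$. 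So Proposition \ref{prop:cvpp} as stated is \emph{not} strong enough to feed directly into \eqref{lim:genelocrest}; the error $O(r)$ must be shown to only contribute to the \emph{expectation} $\E_r\Sigma$ converging to $I(f,m)$ as a Riemann sum, which is a weaker requirement than \eqref{lim:genelocrest}. Thus I would not invoke the abstract Theorem verbatim but rather re-run its proof by hand.

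First I would show that $\E_r\,\Sigma(f,m,r)\to I(f,m)$. Writing $\E_r\,\Sigma(f,m,r)=r^2\sum_{(t,h)\in A_r}f(rt,rh)\,\E_r[c_{(t,h)+m}]$, I replace each $\E_r[c_{(t,h)+m}]$ by $\E_{(rt,rh)}[c_m]$ at a cost bounded, via Proposition \ref{prop:cvpp}, by $Cr$ per term. The number of lattice points $(t,h)\in A_r$ with $(rt,rh)$ in the compact support of $f$ is $O(r^{-2})$, so the total replacement error is $O(r^{-2}\cdot r^2\cdot r)=O(r)\to 0$. The remaining sum $r^2\sum f(rt,rh)\,\E_{(rt,rh)}[c_m]$ is a Riemann sum over the lattice $rE=r\Z\times\frac{r}{2}\Z$ (mesh $\to 0$) for the integral $I(f,m)=\int_A f(\tau,\chi)\E_{(\tau,\chi)}[c_m]\,d\tau\,d\chi$; convergence follows from continuity of $f$ and of $(\tau,\chi)\mapsto\E_{(\tau,\chi)}[c_m]$ on $A$, plus compact support to control boundary effects near $\partial A$. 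I would need the integrand to be Riemann-integrable, which holds because $\E_{(\tau,\chi)}[c_m]=\det(\mathcal{S}_{z(\tau,\chi)}(\cdots))$ is continuous in $(\tau,\chi)$ on the open set $A$ and bounded by $1$.

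Second, and this is the main obstacle, I must establish $\mathrm{var}_r(\Sigma(f,m,r))\to 0$. Expanding,
\begin{align*}
\mathrm{var}_r(\Sigma)=r^4\sum_{(t,h),(t',h')\in A_r} f(rt,rh)f(rt',rh')\,\mathrm{Cov}_r\big(c_{(t,h)+m},\,c_{(t',h')+m}\big).
\end{align*}
Because $\P_q$ is determinantal with kernel $K_q$ (Theorem \ref{thmokresh}), both $\E_r[c_{(t,h)+m}c_{(t',h')+m}]$ and the product $\E_r[c_{(t,h)+m}]\E_r[c_{(t',h')+m}]$ are minors of $K_q$, and their difference $\mathrm{Cov}_r$ is a sum of products of off-diagonal kernel entries $K_q(p,p')$ with $p$ near $(t,h)$ and $p'$ near $(t',h')$. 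The key is a decay estimate: the covariance is controlled by $|K_q(p,p')|^2$-type terms that decay in the separation between the two patterns. I would extract, from the double-contour-integral representation \eqref{defKq} by saddle-point or steepest-descent analysis, a bound of the form $|K_q(p,p')|\leq C/\mathrm{dist}(p,p')$ for well-separated points (reflecting the $1/(z-w)$ singularity after deformation), with the kernel staying bounded on the diagonal. With such a bound, $|\mathrm{Cov}_r((t,h),(t',h'))|\lesssim \big(1+\mathrm{dist}((t,h),(t',h'))\big)^{-2}$, and summing $r^4$ times a function of order $(1+|(t,h)-(t',h')|)^{-2}$ over the $O(r^{-2})\times O(r^{-2})$ pairs in the support of $f$ gives $r^4\cdot r^{-2}\cdot\sum_{\Delta}(1+|\Delta|)^{-2}$; since $\sum_{\Delta\in\Z^2}(1+|\Delta|)^{-2}$ diverges only logarithmically (and in two dimensions one actually gets $O(r^{-2})$ from the diagonal band), a careful accounting yields $\mathrm{var}_r(\Sigma)=O(r^2\log(1/r))$ or at worst $O(r^2\cdot r^{-2})$ times a convergent factor, hence $\to 0$. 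The delicate point is that the naive bound $(1+|\Delta|)^{-2}$ is summable in $\Z^2$ only marginally, so I expect to need the sharper two-dimensional decay of $|K_q(p,p')|^2$ (which behaves like $\mathrm{dist}^{-2}$, giving $|K_q|^2\sim\mathrm{dist}^{-2}$ as well) together with the boundedness of $f$ to close the estimate; quantifying the uniformity of the steepest-descent bound as $(t,h),(t',h')$ range over the $r^{-1}$-dilated support and as $r\to 0$ is where the real work lies. Chebyshev's inequality then combines the two pieces to yield \eqref{lim:llnschur}.
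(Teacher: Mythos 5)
Your first half is sound and matches the paper: the paper likewise does not invoke the abstract introductory theorem verbatim but shows directly that $\E_r\Sigma(f,m,r)\to I(f,m)$, replacing each $\E_r[c_{(t,h)+m}]$ by $\E_{(rt,rh)}[c_m]$ at cost $O(r)$ per term via Proposition \ref{prop:cvpp}, using $|A_{r,\mathcal{K}}|=O(r^{-2})$, and recognizing a Riemann sum (with continuity of $(\tau,\chi)\mapsto\E_{(\tau,\chi)}[c_m]$ supplied by Proposition \ref{propcont}). Your observation that the stated hypothesis \eqref{lim:genelocrest} is too strong to be fed the $O(r)$ bound, while only the total replacement error matters, is a correct and good catch.

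The genuine gap is in the variance step, precisely at the sentence where you propose to extract ``a bound of the form $|K_q(p,p')|\leq C/\mathrm{dist}(p,p')$'' for individual kernel entries. No such bound holds uniformly in this regime: the kernel \eqref{defKq} is not Hermitian, individual entries are only meaningful up to a gauge factor $f(p)/f(p')$ (Remark \ref{rem:gaugetransform}), and in the steepest-descent analysis at equal times ($\tau_1=\tau_2=\tau$, $\chi_1\neq\chi_2$) each cross entry carries a factor $\exp\left(\pm\frac{\tau}{2r}(\chi_2-\chi_1)\right)$, exponentially large in $1/r$. Only the gauge-invariant product $K_q(p,p')K_q(p',p)$ is controlled, and this is exactly why the paper first proves Proposition \ref{prop:covdet}: every term of the covariance of two disjoint patterns contains such a pair of opposite cross entries, which the paper explicitly introduces as the device for symmetrizing the non-symmetric kernel. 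Moreover the true decay is anisotropic, not the isotropic $(1+\mathrm{dist})^{-2}$ you posit. Lemma \ref{lemdecK} gives $O\left(\exp(-r^{-\delta})/|\tau_1-\tau_2|^2\right)$ when the time coordinates differ (superexponential, because the contours $\gamma_{\tau_1}$ and $\gamma_{\tau_2}$ stay at positive distance and no residue survives), but only $O\left(r/|\chi_1-\chi_2|\right)=O\left(1/|h_1-h_2|\right)$ at equal times — first-order decay, dominated by the cross terms between a residue arc integral (estimated by integration by parts) and a double integral, whose exponential gauge factors cancel only in the product. Your own accounting flags ``$O(r^2\cdot r^{-2})$ times a convergent factor,'' i.e. $O(1)$, as a possible outcome, a symptom that the isotropic single-entry bound does not close the argument as written. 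The paper instead splits $A_{r,\mathcal{K}}^2$ into three regions ($t_1\neq t_2$; equal times with well-separated heights; bounded separation, handled by Lemma \ref{lemdiag}), yielding $\mathrm{var}_r=O(r)$. Your plan is repairable by transferring the distance-decay claim to the product of the two opposite cross entries and treating the equal-time band separately, but establishing those uniform anisotropic estimates is precisely the content of Lemma \ref{lemdecK}, which your proposal defers as ``where the real work lies.''
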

\begin{rem}The assumption of compactness of the support of the function $f$ is used for the uniformity of constants in estimations of averages and variances. It might be interesting to see if Theorem \ref{thm:ppp} still holds for a wider class of functions, e.g. Schwartz functions.
\end{rem}

\subsection{Organization of the paper}
The paper is organized as follows. In Section \ref{sec:dpp}, we recall the definition of a determinantal point process on a discrete space $E$. We show in Proposition \ref{prop:covdet} a way to control covariances for disjoint patterns in a determinantal point process. The proposition will be used in the proofs of Lemma \ref{lem:varschur1} and especially in the proof of Lemma \ref{lemdecK}, as it provides a method of symmetrizing the problem posed by the fact that the correlation kernel $K_q$ is not symmetric.\\

Section \ref{sec:proofthmschur} is devoted to the proof of Theorem \ref{thm:schur}. We state preliminary results in Section \ref{sec:covschur} concerning the control of the covariances for the symmetric Schur measures, Lemmas \ref{lem:varschur1} and \ref{lem:varschur2}. We prove Theorem \ref{thm:schur} in Section \ref{sec:proofllnschur}, assuming Proposition \ref{prop:limschur} and Lemmas \ref{lem:varschur1} and \ref{lem:varschur2}. We prove Proposition \ref{prop:limschur} in Section \ref{sec:proofproplimschur}. In Section \ref{sec:rkshift}, we give an interpretation of Proposition \ref{prop:limschur} in terms of shift invariant subspaces of $l^2(\Z)$; this part is rather independant of the rest of the paper. We finally prove the decorrelation Lemmas \ref{lem:varschur1} and \ref{lem:varschur2} in Section \ref{sec:proofcovschur}.\\

We prove Theorem \ref{thm:ppp} in Section \ref{sec:proofthmpp}, the structure being the same as for the preceding section \ref{sec:proofthmschur}. We state Lemmas \ref{lemdecK} and \ref{lemdiag} concerning the control of the covariances in Section \ref{sec:covpp}, before proving Theorem \ref{thm:ppp} in Section \ref{sec:proofllnpp}, assuming Proposition \ref{prop:cvpp}, Lemmas \ref{lemdecK} and \ref{lemdiag}. We prove Proposition \ref{prop:cvpp} in Section \ref{sec:proofproplimpp}, and the decorrelation lemmas \ref{lemdecK} an \ref{lemdiag} in Section \ref{sec:proofcovpp}

\subsection*{Acknowledgements} I would like to thank Alexander Bufetov for posing the problem to me and for helpful discussions. I also would like to thank Alexander Boritchev, Nizar Demni and Pascal Hubert for helpful discussions and remarks.

This project has received funding from the European Research Council (ERC) under the European Union's Horizon 2020 research and innovation programme (grant agreement N 647133).

The author acknowledges the support of the fellowship "Assegni di ricerca FSE SISSA 2019"  from Fondo Sociale Europeo - Progetto SISSA OPERAZIONE 1 codice FP195673001.
\section{Determinantal point processes} \label{sec:dpp}
\subsection{Definitions}
We recall the following
\begin{defi}A point process $\P$ on $E$ is a \emph{determinantal point process} if there exists a kernel
\[K : E \times E \rightarrow \C, \]
called a correlation kernel for $\P$, such that, for any finite $m=\{m_1,\dots,m_l \} \subset E$, we have
\begin{align}\label{eq:defdpp}
\P ( m \subset X) = \E_\P [c_m] = \det \left(K(m_i,m_j) \right)_{i,j=1}^l.
\end{align}
\end{defi}
\begin{rem}\label{rem:gaugetransform} A correlation kernel of a given determinantal point process is not unique. Indeed, if $\P$ is a determinantal point process with correlation kernel $K$, then for any non-vanishing function $f : E \rightarrow \C$, the kernel
\[\tilde{K}(x,y) = \frac{f(x)}{f(y)} K(x,y) \]
also serves as a correlation kernel for $\P$, since the factors involving the function $f$ will disappear from any determinant of the form (\ref{eq:defdpp}).
\end{rem}
A correlation kernel $K$ defines to an integral operator on $l^2(E)$, which we denote by the same letter $K$
\begin{align*}
K :l^2(E) &\rightarrow l^2(E) \\
f &\mapsto \left( x \mapsto \sum_{y \in E} K(x,y)f(y) \right).
\end{align*}

Conversally, by the Macchi-Soshnikov/Shirai-Takahashi Theorem  (\cite{macchi}, \cite{soshnikov}, \cite{shiraitakahashi}), any locally trace class hermitian integral operator $K$ gives rise to a determinantal point process. In particular, any orthogonal projection onto a closed subspace $\mathcal{E}$ of $l^2(E)$ gives rise to a determinantal point process. 
\subsection{Covariances for discrete determinantal point processes}
We here provide a method for the estimation of covariances for discrete determinantal point processes. In particular, this method will be useful for determinantal point processes with a non-symmetric correlation kernel.
\begin{prop}\label{prop:covdet}Let $\P$ be a determinantal point process on $E$ with correlation kernel $K$. Let $m=\{ m_1,\dots, m_l\}$, $m'=\{m_1',\dots,m_{l'}' \} \subset E$ be two disjoint patterns. Then, the covariance
\begin{align} \label{eq:covgene}
\mathrm{cov}_\P(c_m,c_m'):= \E_\P [c_m c_{m'} ] - \E_\P [c_m] \E_\P[c_{m'}]
\end{align}
is a sum of $(l+l')! -l!l'!$ terms, each of one containing a factor of the form
\begin{align*}
K(m_i,m_j')K(m_i',m_j).
\end{align*}
\end{prop}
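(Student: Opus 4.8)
The plan is to reduce the whole statement to the Leibniz expansion of a single block determinant. First I would use disjointness: since $m$ and $m'$ are disjoint, the product $c_m c_{m'}$ is exactly the indicator $c_{m\cup m'}$ of the event $\{m\cup m'\subset X\}$, and $m\cup m'$ is a pattern with $l+l'$ points. Applying the defining relation (\ref{eq:defdpp}), I obtain $\E_\P[c_m c_{m'}]=\det\left(K(n_i,n_j)\right)_{i,j=1}^{l+l'}$, where I order the points of $m\cup m'$ so that $m=\{n_1,\dots,n_l\}$ comes first and $m'=\{n_{l+1},\dots,n_{l+l'}\}$ second. In this ordering the correlation matrix takes the block form $\left(\begin{smallmatrix} A & B \\ C & D \end{smallmatrix}\right)$ with $A=(K(m_i,m_j))$, $D=(K(m_i',m_j'))$, $B=(K(m_i,m_j'))$ and $C=(K(m_i',m_j))$, while the two subtracted factors are $\E_\P[c_m]=\det A$ and $\E_\P[c_{m'}]=\det D$.

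Next I would expand $\det\left(\begin{smallmatrix} A & B \\ C & D \end{smallmatrix}\right)$ by the Leibniz formula as a signed sum over all $\sigma\in S_{l+l'}$ of $\prod_{k=1}^{l+l'}K(n_k,n_{\sigma(k)})$. The key observation is that $\det A\,\det D$ is precisely the partial sum of this expansion restricted to the \emph{block-preserving} permutations, i.e. those $\sigma$ mapping $\{1,\dots,l\}$ and $\{l+1,\dots,l+l'\}$ each to itself: there are exactly $l!\,l'!$ of them, and for each the sign and the product factorize across the two blocks. Subtracting therefore leaves exactly the terms indexed by the permutations that do \emph{not} preserve the block decomposition, of which there are $(l+l')!-l!\,l'!$, giving the claimed count.

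It then remains to exhibit the announced factor in each surviving term. Here I would use an elementary balancing count: if $\sigma$ is not block-preserving, let $p\geq 1$ be the number of indices $i\in\{1,\dots,l\}$ with $\sigma(i)>l$. Counting how the image of the first block distributes between the two blocks shows that exactly $p$ indices $i'>l$ satisfy $\sigma(i')\leq l$ as well. Hence the product $\prod_k K(n_k,n_{\sigma(k)})$ contains at least one factor $K(m_i,m_j')$ (from an index of the first block sent to the second) \emph{and} at least one factor $K(m_{i'}',m_{j'})$ (from an index of the second block sent to the first), which is a factor of the form asserted in the statement.

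The argument is routine once the block structure is in place, so I do not expect a genuine obstacle; the only point requiring a moment of care is this balancing count, which guarantees that a crossing from $m$ to $m'$ is always accompanied by a crossing from $m'$ to $m$, so that a $K(m_\bullet,m_\bullet')$ factor and a $K(m_\bullet',m_\bullet)$ factor are simultaneously present in every non-block-preserving term.
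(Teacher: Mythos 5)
Your proposal is correct and follows essentially the same route as the paper: both expand $\E_\P[c_{m\sqcup m'}]$ via the Leibniz formula for the $(l+l')\times(l+l')$ determinant, identify the $l!\,l'!$ block-preserving permutations with $\E_\P[c_m]\E_\P[c_{m'}]$, and observe that every remaining permutation must cross between the blocks in both directions. Your explicit balancing count is just a more detailed version of the paper's remark that a permutation fixing $m$ setwise would also fix $m'$, so there is nothing to add.
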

\begin{proof}Since the patterns $m$ and $m'$ are disjoint, we have 
\[c_mc_{m'}  =c_{m \sqcup m'}.\]
By definition of a determinantal point process, the expectation 
\[\E_\P [c_mc_{m'}]  \]
is an alternate sum of $(l+l')!$ indexed by the permutations of the set $m\sqcup m'$. Among these permutations, let us consider those that leave the sets $m$ and $m'$ invariant, i.e. permutations $\sigma$ which can be be factorized as $\sigma=\tau \tau'$ where $\tau$ (resp. $\tau'$) is a permutation of $m$ (resp. $m'$). The alternate sum over all the permutations leaving the sets $m$ and $m'$ invariant is nothing but the product of the determinants
\[ \E_\P[c_m] \E_\P [c_{m'}]. \]
Thus, the remaining terms in the covariance (\ref{eq:covgene}) are all indexed by permutations $\sigma$ leaving neither $m$ nor $m'$ invariant (since $\sigma $ is a permutation, if $\sigma$ left $m$ invariant, it would also leave $m'$ invariant), i.e. there exist $m_i,m_j \in \{m_1,\dots,m_l\}$ and $m_i',m_j' \in \{m_1',\dots,m_{l'}'\}$ such that
\begin{align*}
\sigma(m_i)=m_j' \quad \text{and} \quad \sigma(m_{i}')=m_j.
\end{align*}
The proof is complete.
\end{proof}
\section{Proof of Theorem \ref{thm:schur}} \label{sec:proofthmschur}

\subsection{Control of the covariances} \label{sec:covschur}
Let $\alpha >0$ and let $\P_\alpha$ be the image by $\mathfrak{S}$ of the symmetric Schur measure $\mathbb{S}_\alpha$ defined in section \ref{sec:introllnschur}. We start by stating preliminary results on the control of the covariances
\begin{align*}
\mathrm{cov}_\alpha (c_{m + \alpha u_1},c_{m + \alpha u_2})= \E_\alpha [c_{m+\alpha u_1} c_{m + \alpha u_2} ] - \E_\alpha [c_{m+\alpha u_1}] \E_\alpha [ c_{m + \alpha u_2} ],
\end{align*}
distinguishing the cases when $u_1$, $u_2 \in [u_{\min}, u_{\max}]$ are away from each other or not.
\begin{lem}\label{lem:varschur1}Let $m = \{m_1,\dots,m_l\} \subset \Z$ be a pattern and denote by $\overline{m}$ its supremum norm
\[\overline{m} = \max \{|m_i|, \hspace{0.1cm} i=1,\dots,l\}. \]
There exists $C>0$ such that for all sufficiently large $\alpha>0$ and for any $u_1,u_2 \in [u_{\min},u_{\max}]$ such that
\[|u_1-u_2| > \frac{\overline{m}}{\alpha}, \]
we have
\begin{align}
 \left| \mathrm{cov}_\alpha(c_{m+ \alpha u_1 }, c_{m +  \alpha u_2 }) \right| \leq \frac{C}{\alpha |u_1-u_2|}.
\end{align}
\end{lem}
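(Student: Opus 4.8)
The plan is to control the covariance of two pattern-indicators using Proposition~\ref{prop:covdet}, which expresses $\mathrm{cov}_\alpha(c_{m+\alpha u_1},c_{m+\alpha u_2})$ as a sum of finitely many terms (the number depending only on $l=|m|$), each containing a factor of the form
\[
K_\rho(x_i,y_j)K_\rho(x_j,y_i),
\]
where the arguments come from the two translated patterns $m+\alpha u_1$ and $m+\alpha u_2$. Since the pattern $m$ is fixed, the number of such terms is a constant independent of $\alpha$, so it suffices to bound a single factor $K_\rho(a,b)K_\rho(b,a)$ where $a$ ranges over $m+\alpha u_1$ and $b$ over $m+\alpha u_2$. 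Because $|u_1-u_2|>\overline{m}/\alpha$, the two patterns are disjoint and in fact any such $a,b$ satisfy $|a-b|\geq \alpha|u_1-u_2|-2\overline{m}$, which for large $\alpha$ is comparable to $\alpha|u_1-u_2|$. Thus the whole problem reduces to showing an off-diagonal decay estimate for the Schur kernel: $|K_\rho(x,y)|\lesssim 1/|x-y|$ (with a constant uniform over the relevant range), so that the product $|K_\rho(a,b)K_\rho(b,a)|\lesssim 1/|a-b|^2$, and then summing/bounding the finitely many terms gives the claimed $C/(\alpha|u_1-u_2|)$ — with one factor of $1/|a-b|$ producing the stated bound and the second factor being harmlessly $O(1)$, or both combining to give an even stronger bound that is then relaxed.

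The core analytic step is therefore the off-diagonal bound on the kernel. Here I would use the double-contour representation \eqref{eq:kernelschur}, writing
\[
K_\rho(x,y)=\frac{1}{(2i\pi)^2}\int_{|z|=1+\varepsilon}\int_{|w|=1-\varepsilon} e^{\alpha S(z)-\alpha S(w)}\,\frac{1}{z-w}\,\frac{dz\,dw}{z^{x}w^{-y+1}},
\]
where $S$ collects the specialization data scaled by $\alpha$ (recall $\rho_\alpha=\alpha\rho$, so the action $G$ from \eqref{eq:defg} enters with a factor $\alpha$). The key observation is that $K_\rho(x,y)$ is, up to the exponential phase factors, a Fourier-type coefficient, and the $1/(z-w)$ singularity together with the monomials $z^{-x}w^{y-1}$ is exactly the Cauchy-kernel structure. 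I would integrate by parts in one variable, or equivalently exploit the telescoping identity $\frac{1}{z-w}=\frac{1}{z}\sum_{k\ge0}(w/z)^k$ on the nested contours, to extract a gain of $1/(x-y)$: each summand becomes a product of two single integrals of the form $\oint e^{\alpha S(z)}z^{-x+j}\,dz$, and the difference structure produces an explicit factor $1/(x-y)$ after resummation, mirroring the computation that yields the discrete sine kernel $\sin(\cdot)/(\pi(x-y))$ in the limit.

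The main obstacle I anticipate is obtaining the off-diagonal bound \emph{uniformly} in $\alpha$ and in the spatial arguments, rather than a bound that degrades as $\alpha\to\infty$. The exponential factor $e^{\alpha S(z)-\alpha S(w)}$ is oscillatory on $\T$ (since $G(z)\in i\R$ there) but potentially large off $\T$; one must choose the contour radii $1\pm\varepsilon$ carefully, possibly with $\varepsilon$ depending on $\alpha$, to keep $|e^{\alpha(S(z)-S(w))}|$ controlled while still enclosing the correct poles. A clean way around this is to first reduce to $\varepsilon=0$, i.e. integrate over $\T\times\T$ in a principal-value sense, so that the exponential has modulus one and only the $1/(z-w)$ factor and the Cauchy structure matter; then the needed estimate is essentially that a Fourier coefficient of a function with the Cauchy-kernel singularity decays like $1/|x-y|$, uniformly in $\alpha$ because $G'$ stays in a fixed bounded range on $\T$. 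I expect that establishing this uniformity — i.e. that the implied constant in $|K_\rho(x,y)|\lesssim 1/|x-y|$ does not blow up with $\alpha$ — is where the real work lies, and it is plausible that the cleaner route is to prove the sharper two-factor bound $|K_\rho(a,b)K_\rho(b,a)|\lesssim 1/|a-b|^2$ directly and then relax it to the stated $C/(\alpha|u_1-u_2|)$ using $|a-b|\gtrsim\alpha|u_1-u_2|$.
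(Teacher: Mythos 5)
Your reduction is exactly the paper's: Proposition \ref{prop:covdet}, together with the symmetry of $K_\alpha$ and the boundedness of its entries, reduces the lemma to the single off-diagonal estimate $|K_\alpha(x+\alpha u_1,y+\alpha u_2)|\leq C/(\alpha|u_1-u_2|)$ for $x,y$ ranging over the fixed pattern, with one decaying cross factor sufficing (your sharper two-factor bound $1/|a-b|^2$ is more than is needed or proved). You also correctly identified that the whole difficulty is uniformity in $\alpha$. But the mechanism you propose for the kernel bound does not deliver that uniformity, and this is a genuine gap. Expanding $\frac{1}{z-w}=\sum_{k\geq 0}w^kz^{-k-1}$ turns the double integral into the representation $K_\alpha(x,y)=\sum_{n\geq 0}\hat F_\alpha(x-n)\overline{\hat F_\alpha(y-n)}$ (cf.\ Proposition \ref{prop:shiftschur}), and any summation-by-parts or "resummation'' that extracts the factor $x-y$ converts it into a derivative of the symbol; since the symbol is $e^{\alpha G}$-type, each derivative costs a factor $\alpha$, so this route yields only $|K_\alpha(x,y)|\lesssim \alpha/|x-y|$, which at separation $|x-y|\asymp\alpha|u_1-u_2|$ gives $O(1/|u_1-u_2|)$ --- a full factor of $\alpha$ short of the lemma, and useless for the variance estimate. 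Your proposed "clean way around'' has the same defect: the $1/|n|$ Fourier decay of a Cauchy-singular function comes with a constant proportional to smoothness norms of the amplitude, and here the amplitude $e^{\alpha(S_{u_1}(z)-S_{u_2}(w))}$ oscillates with frequency $\sim\alpha$ on $\T\times\T$, so "$G'$ stays in a fixed bounded range'' controls nothing at the relevant scale.

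The missing idea is that the extra factor of $\alpha$ can only be recovered from the stationary-phase structure, which is how the paper proceeds: it deforms the $z$-contour to the steepest-descent contour $\gamma_{u_1}^<$ and the $w$-contour to $\gamma_{u_2}^>$ --- note, contours adapted to the two \emph{different} parameters $u_1\neq u_2$ --- after which the double integral is $O(\exp(-\alpha^\delta))$ uniformly, while the deformation crosses the pole at $z=w$ and leaves residue contributions
\begin{align*}
\sum_k \frac{1}{2\pi}\int_{\phi_k}^{\psi_k} e^{i\theta(\alpha u_2-\alpha u_1+y-x-1)}\,d\theta
\end{align*}
over arcs joining the saddle points. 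The bound $C/(\alpha|u_1-u_2|)$ then comes from directly integrating this explicit oscillatory term, whose frequency is $\alpha(u_2-u_1)+O(1)$ --- not from any generic Fourier-decay property of the Cauchy kernel. In short: your skeleton (covariance reduction plus off-diagonal kernel decay) coincides with the paper's, but the core analytic step, which you yourself flagged as "where the real work lies,'' is absent, and the specific substitutes you suggest (telescoping/integration by parts; principal value on the torus) provably lose a factor of $\alpha$.
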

\begin{lem}\label{lem:varschur2}For any patterns $m,m' \subset \Z$, there exists $C>0$ such that for any $u \in [u_{\min},u_{\max}]$ and any sufficiently large $\alpha >0$ we have
\begin{align}
\left| \mathrm{cov}_\alpha( c_{m + \alpha u}, c_{m' + \alpha u}) \right| \leq C.
\end{align}
\end{lem}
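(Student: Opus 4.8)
The plan is to reduce the estimate to a uniform bound on the entries of the correlation kernel, and then to expand the covariance into a number of such entries that depends only on the sizes of $m$ and $m'$. The essential input is that $\P_\alpha$, being a determinantal point process with a Hermitian kernel (indeed, as recalled for the symmetric Schur measures, its kernel is, up to a gauge transformation as in Remark \ref{rem:gaugetransform}, the orthogonal projection onto a shift invariant subspace), admits a correlation kernel $K_\alpha$ satisfying $0 \leq K_\alpha \leq 1$ as an operator on $l^2(\Z)$. Since the covariance and each of the products of kernel entries produced below are invariant under the gauge transformations of Remark \ref{rem:gaugetransform} (the gauge factors telescope along each permutation), I may work with this Hermitian representative. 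From $0 \leq K_\alpha \leq 1$, the positive semi-definiteness of every principal $2\times 2$ compression gives $|K_\alpha(x,y)|^2 \leq K_\alpha(x,x)\,K_\alpha(y,y) \leq 1$, hence
\[
|K_\alpha(x,y)| \leq 1 \qquad \text{for all } x,y \in \Z,
\]
a bound that is uniform in $\alpha>0$ and in $u \in [u_{\min},u_{\max}]$.

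With this in hand I would estimate the covariance termwise. Writing $l = |m|$ and $l' = |m'|$, if the shifted patterns $m+\alpha u$ and $m'+\alpha u$ are disjoint, then Proposition \ref{prop:covdet} expresses $\mathrm{cov}_\alpha(c_{m+\alpha u},c_{m'+\alpha u})$ as a sum of exactly $(l+l')! - l!\,l'!$ signed terms, each of which is a product of $l+l'$ kernel entries; by the displayed bound each such product has modulus at most $1$, so the covariance is bounded by $(l+l')! - l!\,l'!$. If instead the shifted patterns overlap (which does occur, since the relevant shift is the genuine translation by $\lfloor \alpha u\rfloor$ and so disjointness is equivalent to $m\cap m'=\emptyset$), the same conclusion follows by writing the covariance as the difference of $\E_\alpha[c_{(m+\alpha u)\cup(m'+\alpha u)}]$ and $\E_\alpha[c_{m+\alpha u}]\,\E_\alpha[c_{m'+\alpha u}]$: each of these is a determinant, respectively a product of determinants, of size at most $l+l'$ whose entries are bounded by $1$, hence bounded by a constant depending only on $l,l'$. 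Taking $C$ to be the resulting constant completes the argument.

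I should point out that in this symmetric Schur setting an even shorter route is available: since $c_{m+\alpha u}$ and $c_{m'+\alpha u}$ take values in $\{0,1\}$, both $\E_\alpha[c_{m+\alpha u}c_{m'+\alpha u}]$ and $\E_\alpha[c_{m+\alpha u}]\,\E_\alpha[c_{m'+\alpha u}]$ lie in $[0,1]$, so the covariance already lies in $[-1,1]$ and $C=1$ works. I would nonetheless present the determinantal argument above, as it makes the uniform kernel bound explicit within the paper's framework and parallels the analogous near-diagonal estimate for the plane partition model (Lemma \ref{lemdiag}), where the kernel is no longer Hermitian.

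I do not expect a serious obstacle here; the only point that genuinely requires care is the uniformity in $\alpha$. This is exactly what the operator inequality $0 \leq K_\alpha \leq 1$ settles for free: it yields $|K_\alpha(x,y)| \leq 1$ with no dependence on $\alpha$ or $u$, whereas extracting the same uniform bound by estimating the double contour integral \eqref{eq:kernelschur} directly would require controlling the exponential integrand on the contours $|z|=1+\varepsilon$, $|w|=1-\varepsilon$ as $\alpha \to +\infty$, which is more delicate and best avoided. The role of this lemma is merely to supply the uniform constant needed in the near-diagonal band $|x-x'|\leq \overline{m}$ of the variance sum, the off-diagonal decay being provided by Lemma \ref{lem:varschur1}.
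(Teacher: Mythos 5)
Your proposal is correct, but it proves the lemma by a genuinely different route than the paper. The paper's own proof is a transfer-to-the-limit argument, parallel to its proof of Lemma \ref{lemdiag} for plane partitions: since $c_{m+\alpha u}c_{m'+\alpha u}=c_{(m\cup m')+\alpha u}$, Proposition \ref{prop:limschur} (applied to the patterns $m$, $m'$ and $m\cup m'$) shows that $\mathrm{cov}_\alpha(c_{m+\alpha u},c_{m'+\alpha u})$ equals the corresponding covariance under the limit process $\P_{\mathcal{S}(u)}$ up to an error $O(e^{-\alpha^\delta})$ uniform in $u$, and the limit covariance is bounded on $[u_{\min},u_{\max}]$ because $u\mapsto \mathcal{S}(u)(x,y)$ is continuous (the endpoints $e^{i\phi_k(u)}$, $e^{i\psi_k(u)}$ solve $zG'(z)=u$) and the interval is compact. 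You instead obtain the uniform bound with no asymptotic input at all: either via the entrywise bound $|K_\alpha(x,y)|\leq 1$ combined with Proposition \ref{prop:covdet} (or a direct determinant expansion when the shifted patterns overlap), or — as you rightly point out — trivially, since covariances of $\{0,1\}$-valued variables lie in $[-1,1]$, so $C=1$ works and the entire determinantal machinery is unnecessary for this particular statement. Both arguments are sound; one small correction is that by Proposition \ref{prop:shiftschur} the kernel is an orthogonal projection kernel up to the reflection $x\mapsto -x$, not up to a gauge transformation as you write, but since the reflection preserves the moduli of the entries your bound $0\leq K_\alpha\leq 1$, hence $|K_\alpha(x,y)|\leq 1$, stands. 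Your route is more elementary, gives explicit constants, and makes transparent that the real content of the variance estimate sits entirely in Lemma \ref{lem:varschur1}; the paper's route is heavier but reuses machinery it needs anyway and yields strictly more, namely that the near-diagonal covariances actually converge to those of $\P_{\mathcal{S}(u)}$ rather than merely remaining bounded.
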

\subsection{Proof of Theorem \ref{thm:schur}} \label{sec:proofllnschur}
For brevety, we write
\begin{align*}
A_\alpha:= [\alpha u_{\min}, \alpha u_{\max}] \cap \Z.
\end{align*}
We start by writing
\begin{align*}
\E_\alpha \Sigma(f,m,\alpha)= \frac{1}{\alpha} \sum_{x \in A_\alpha} f \left(\frac{x}{\alpha} \right) \E_{\alpha} [ c_{m + x} ] = \frac{1}{ \alpha} \sum_{ u \in [ u_{\min}  ,  u_{\max}] \cap \frac{1}{\alpha}\Z} f(u) \E_{\alpha} [c_{m + \alpha u} ]. 
\end{align*}
By Proposition \ref{prop:limschur}, and since
\begin{align*}
\left| A_\alpha \right| = O \left(\alpha \right),
\end{align*}
we have that, for any $\delta \in (0,1)$, there exists $C>0$ such that
\begin{align} \label{ineq:cvexpschur}
 \left|\E_\alpha \Sigma(f,m,\alpha) - \frac{1}{\alpha} \sum_{ u \in [ u_{\min}  ,  u_{\max}] \cap \frac{1}{\alpha}\Z} f(u) \E_{\mathcal{S}(u)} [c_m] \right| 
\leq C \alpha  \exp \left(-\alpha^{-\delta} \right).
\end{align}
The sum
\begin{align*}
\frac{1}{\alpha} \sum_{ u \in [ u_{\min}  ,  u_{\max}] \cap \frac{1}{\alpha}\Z} f(u) \E_{\mathcal{S}(u)} [c_m] = \frac{1}{\alpha} \sum_{ x \in A_\alpha} f \left( \frac{x}{\alpha} \right) \E_{\mathcal{S}(x/\alpha)} [c_m]
\end{align*}
is a Riemann sum for the integral
\begin{align*}
I(f,m)= \int_{u_{\min}}^{u_{\max}} f(u) \E_{\mathcal{S}(u)} [c_m] du,
\end{align*}
whence we deduce from (\ref{ineq:cvexpschur}) that
\begin{align*}
\lim_{\alpha \rightarrow + \infty} \E_\alpha \Sigma(f,m ,\alpha) = I(f,m).
\end{align*}
It suffices now to prove that the variance
\begin{multline} \label{eq:varschur}
\mathrm{var}_\alpha \Sigma(f,m,\alpha) = \E_\alpha \left| \Sigma(f,m,\alpha) - \E_\alpha \Sigma(f,m,\alpha) \right|^2 \\
= \frac{1}{\alpha^2} \sum_{x,y \in A_\alpha} f\left( \frac{x}{\alpha} \right) f \left( \frac{y}{\alpha} \right) \mathrm{cov}_{\alpha} (c_{m + x}, c_{m + y})
\end{multline}
goes to $0$ as $\alpha \rightarrow + \infty$. We cut the set $A_\alpha^2$
into two parts by setting
\begin{align*}
A_\alpha^>&= \{(x,y) \in A_\alpha^2, \hspace{0.1cm} |x-y| > \overline{m} \} , \\
A_\alpha^\leq &= A_\alpha^2 \setminus A_\alpha^>,
\end{align*}
and decompose the variance (\ref{eq:varschur}) into two sums
\begin{multline} \label{eq;varschurcut}
\mathrm{var}_\alpha \Sigma(f,m,\alpha)  = \frac{1}{\alpha^2} \left( \sum_{(x,y) \in A_\alpha^>} f\left( \frac{x}{\alpha} \right) f \left( \frac{y}{\alpha} \right) \mathrm{cov}_{\alpha} (c_{m + x}, c_{m + y}) \right.\\
+ \left. \sum_{ (x,y) \in A_\alpha^\leq} f\left( \frac{x}{\alpha} \right) f \left( \frac{y}{\alpha} \right) \mathrm{cov}_{\alpha} (c_{m + x}, c_{m + y}) \right).
\end{multline}
By construction, for all $(x,y) \in A_\alpha^>$, there exists $u_1,u_2 \in [u_{\min},u_{\max}] \cap \frac{1}{\alpha }\Z$ satisfying
\[|u_1-u_2| > \frac{\overline{m}}{\alpha} \]
and such that
\begin{align*}
x=\alpha u_1, \quad y = \alpha u_2.
\end{align*}
By lemma \ref{lem:varschur1}, there exists $C >0$ such that for all $(x,y) \in A_\alpha^>$, we have
\begin{align} \label{ineq:covschur1}
|\mathrm{cov}_{\alpha}(c_{m+x},c_{m+ y })| \leq \frac{C}{|x-y|}.
\end{align}
Since
\[|A_\alpha^>| =O \left( \alpha^2\right), \]
and since the function $f$ is bounded on $[u_{\min},u_{\max}]$, we deduce from (\ref{ineq:covschur1}) that
\begin{align} \label{ineq:varchur1}
\sum_{(x,y) \in A_\alpha^>}  \left| f\left( \frac{x}{\alpha} \right) f \left( \frac{y}{\alpha} \right)  \mathrm{cov}_{\alpha} (c_{m + x}, c_{m + y}) \right| \leq C' \sum_{(x,y) \in A_\alpha^>} \frac{1}{|x-y|}  = O \left( \alpha \log(\alpha) \right).
\end{align}
Now, for any $(x,y) \in A_\alpha^\leq$, there exist $u \in [u_{\min}, u_{\max}]$ and patterns $m', m'' \subset \Z$ such that
\begin{align*}
x+m = \alpha u + m', \quad y+ m = \alpha u + m''.
\end{align*}
Observe that there is only a finite number of possibilities for the patterns $m', m''$ as $(x,y)$ ranges over $A_\alpha^\leq$. Thus by Lemma \ref{lem:varschur2}, there exists $C>0$ such that for all $(x,y) \in A_\alpha^\leq$, we have
\begin{align} \label{ineq:covschur2}
\left| \mathrm{cov}_\alpha(c_{m+x,m+y}) \right| \leq C.
\end{align}
We obtain from (\ref{ineq:covschur2}) that there exists $C'>0$ such that 
\begin{align} \label{ineq:varschur2}
\sum_{(x,y) \in A_\alpha^\leq} \left| f\left( \frac{x}{\alpha} \right) f \left( \frac{y}{\alpha} \right)  \mathrm{cov}_{\alpha} (c_{m + x}, c_{m + y}) \right| \leq C'\alpha.
\end{align}
Inserting inequalities (\ref{ineq:varchur1}) and (\ref{ineq:varschur2}) into (\ref{eq;varschurcut}), we obtain
\begin{align*}
\mathrm{var}_\alpha \Sigma(f,m,\alpha) = O \left( \frac{\log \alpha}{\alpha } \right) 
\end{align*}
and the proof is complete.

\subsection{Proof of Proposition \ref{prop:limschur}} \label{sec:proofproplimschur}
For $u \in [u_{\min},u_{\max}]$, we define the function
\begin{align*}
S_u(z) = G(z) - u\log(z),
\end{align*}
for $z$ in a neighborhood of the unit circle $\T$, except at the intersection with the semi-axis $(-\infty,0]$, so that the logarithm is well defined. Let $K_\alpha$ be the correlation kernel of the point process $\P_\alpha$. We will prove that
\begin{align} \label{lim:schur}
\lim_{ \alpha \to + \infty} K_\alpha( \alpha u + x , \alpha u + y) = \mathcal{S}(u)(x,y),
\end{align}
with a remaining term of order at most $\exp \left( -\alpha^\delta \right)$, $\delta \in (0,1)$. By Theorem \ref{thm:okounkovschurdet}, we have
\begin{align} \label{eq:intkernasympschur}
K_\alpha(x +\alpha u, y+ \alpha u) = \frac{1}{(2i \pi)^2} \int_{|z|=1 + \varepsilon} \int_{|w| = 1 - \varepsilon} \frac{\exp \left(\alpha \left(S_u(z) - S_u(w) \right)\right)}{z-w} \frac{dzdw}{z^{x+1}w^{-y+1}}.
\end{align}
We want to deform the contours of integration in order to have $\Re S_u(z) - \Re S_u(w) <0$. To this aim, observe first that the real part of the function $S_u$ is constant and equals zero on the unit circle $\T$. Identifying the complex plane with $\R^2$, the gradient of the real part of the function $S_u$ is thus orthogonal to $\T$. Its direction is given by the sign of the scalar product
\begin{align*}
\langle \nabla \Re S_u(z) , z \rangle_{\R^2} = zG'(z) -u.
\end{align*}
Recall that for $u \in [u_{\min},u_{\max}]$, the set
$\{ z \in \T, \hspace{0.1cm} zG'(z)\geq u \}$ consists of $L=L(u)$ arcs $[e^{i\phi_k(u)}e^{i\psi_k(u)} ]$, $k=1,\dots ,L(u)$, and observe that the numbers $e^{i\phi_k(u)}$, $e^{i\psi_k(u)}$ are the critical points of the function $\Re S_u$. One deforms the circle $\T$ into a contour $\gamma_u^>$ by following the direction of the gradient $\nabla \Re S_u$, and into an other contour $\gamma_u^{<}$ by following the opposite direction of the gradient$\nabla \Re S_u$, see Figure 4.
\begin{figure} \label{fig:contoursdefschurlim}
\includegraphics[scale=0.2, trim = 0cm 5cm 0cm 4cm, clip =true]{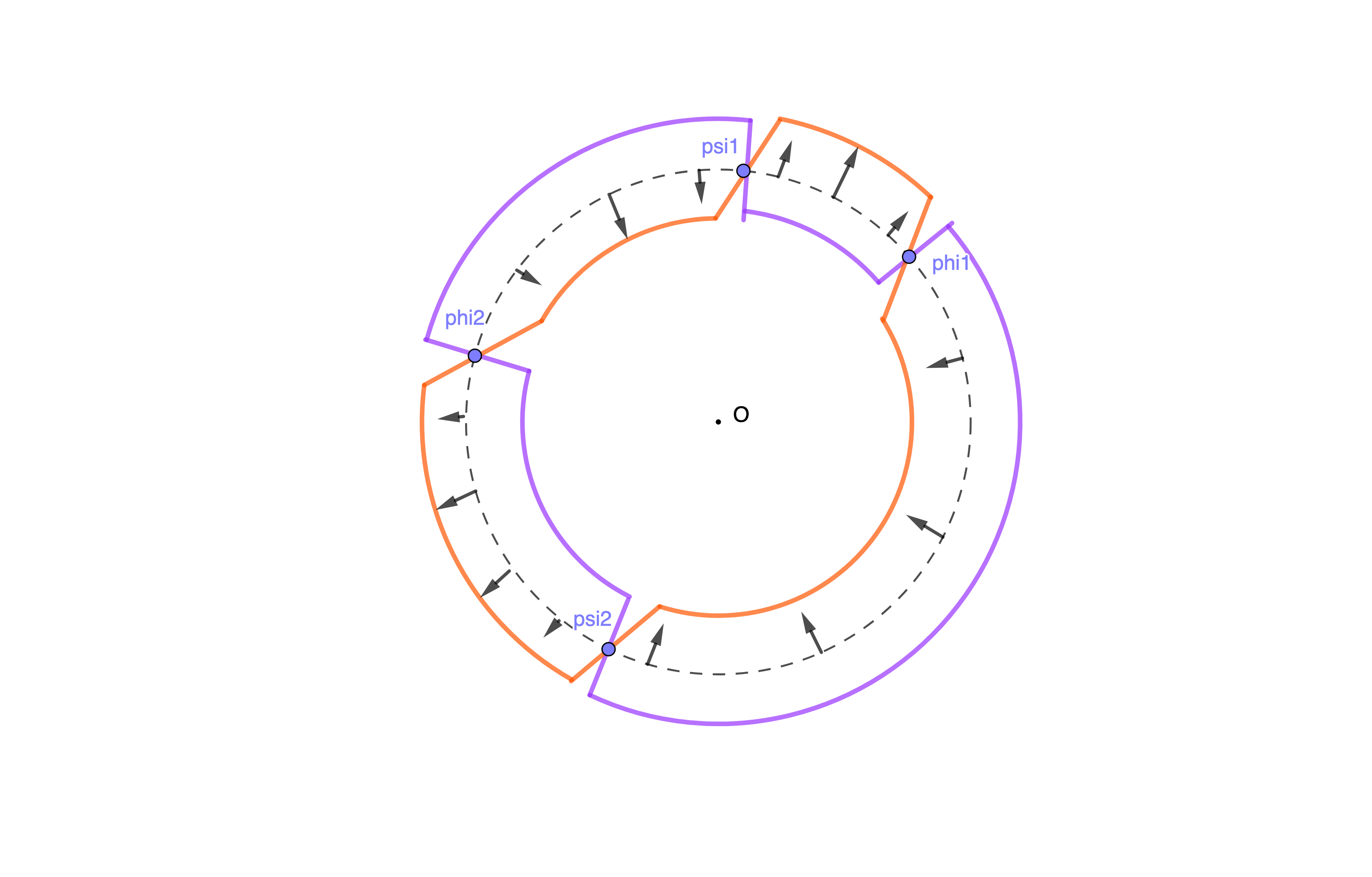}
\caption{The variable $z$ belongs to the purple contour $\gamma_u^<$, while $w$ belongs to the orange one $\gamma_u^>$.}
\end{figure}
By construction, we have
\begin{align*}
\Re S_u(z)  \geq 0, \quad z \in \gamma_u^>,
\end{align*}
and
\begin{align*}
\Re S_u(z) \leq 0, \quad z \in \gamma_u^<,
\end{align*}
with equality only at the critical points $e^{i\phi_k(u)}$, $e^{i\psi_k(u)}$, $k=1,\dots,L(u)$. By the preceding discussion, the dominated convergence Theorem implies that
\begin{align} \label{eq:intwithoutresidueschur}
\lim_{\alpha \rightarrow + \infty} \exp \left( \alpha^\delta \right)\int_{z \in \gamma_u^<} \int_{w \in \gamma_u^>} \frac{\exp \left( \alpha \left(S_u(z)-S_u(w)\right) \right)}{z-w} \frac{dzdw}{z^{x+1}w^{-y+1}} =0
\end{align}
for all $u \in [u_{\min},u_{\max}]$, all $\delta \in (0,1)$ and all $x,y \in \Z$. Observe now that the residue at $z=w$ integrated in the expression (\ref{eq:intkernasympschur}) is no longer integrated in (\ref{eq:intwithoutresidueschur}) on the arcs $[e^{i\phi_k(u)},e^{i\psi_k(u)}]$. Since this residue is $w^{y-x-2}$, we have from (\ref{eq:intwithoutresidueschur})
\begin{multline*}
K_\alpha(x+ \alpha u, y + \alpha u) \\
= \frac{1}{(2i\pi)^2}\int_{z \in \gamma_u^<} \int_{w \in \gamma_u^>} \frac{\exp \left( \alpha \left(S_u(z)-S_u(w)\right) \right)}{z-w} \frac{dzdw}{z^{x+1}w^{-y+1}}
 + \sum_{k=1}^{L(u)}\frac{1}{2\pi} \int_{\phi_k(u)}^{\psi_k(u)} e^{i\theta(y-x-1)}d\theta \\
=O\left( \exp \left( -\alpha^\delta \right) \right) + \mathcal{S}(u)(x,y).
\end{multline*}
Since $[u_{\min},u_{\max}]$ is compact, the $O\left( \exp \left( -\alpha^\delta \right) \right)$ is uniform in $u$. Recalling Definition (\ref{eq:defdpp}), we obtain (\ref{limprop:limschur}) and the proof is complete.
\subsection{A remark on shift invariant subspaces} \label{sec:rkshift}
\subsubsection{Wiener-Helson's Theorem on the classification of shift invariant spaces}
We here give a classification of shift invariant subspaces of $l^2(\Z)$ due to Wiener and Helson, Theorem \ref{thm:wienerhelson} below. The material we present is quite standard, and may be found for instance in the first pages of the book \cite{nikolski1}.
\\

Let $S : l^2(\Z) \to l^2(\Z)$ be the shift
\begin{align*}
S (a_k)_{k \in \Z} = (a_{k-1})_{k \in \Z}, \quad (a_k)_{k \in \Z} \in l^2(\Z).
\end{align*}
We say that a closed subspace $\mathcal{E} \subset l^2(\Z)$ is shift invariant if $S(\mathcal{E}) \subset \mathcal{E}$. If $\mathcal{E}$ is shift invariant, it is said to be \emph{simply} invariant if $S(\mathcal{E}) \neq \mathcal{E}$ and doubly invariant otherwise. \\

We equip the unit circle $\T$ with the normalized Lebesgue measure and recall the Fourier-Plancherel unitary isomorphism
\begin{align*}
\mathcal{F} : L^2(\T) &\to l^2(\Z) \\
F &\mapsto (\hat{F}(k))_{k \in \Z},
\end{align*}
where
\begin{align*}
\hat{F}(k):= \frac{1}{2\pi}\int_{-\pi}^\pi e^{-i\theta k} F(e^{i\theta}) d\theta = \frac{1}{2i \pi} \int_{|z|=1} z^{-k} F(z) \frac{dz}{z}.
\end{align*}
The map $S$ is conjugated to the multiplication by $z$ by the Fourier-Plancherel isomorphism.\\

The Hardy space $H^2(\T)$ is defined as the closed linear space of functions of $L^2(\T)$ with vanishing negative Fourier coefficients:  $H^2(\T):= \{ F \in L^2(\T), \hspace{0.1cm} \hat{F}(k)=0, \hspace{0.1cm} k=-1,-2,\dots \}$. Before stating the Wiener-Helson's Theorem classifying the subspaces of $l^2(\Z)$ which are invariant by the shift $S$ (or, equivalently, the subspaces of $L^2(\T)$ which are invariant by the multiplication by $z$), we make the following elementary observation: if a function $F \in L^2(\T)$ is such that $|F(z)|=1$ for almost all $z \in \T$, then the functions $z \mapsto F(z)z^n$, $n=0,1,\dots$ form an orthonormal basis of $FH^2(\T)$, and the space $FH^2(\T)$ is invariant by the multiplication by $z$. Let us write
\begin{align*}
\mathcal{E}_F:= \mathcal{F}(F H^2(\T)).
\end{align*}
The space $\mathcal{E}_F$ is then a simply shift invariant subspace of $l^2(\Z)$. Helson's Theorem, which is the second point of the Theorem \ref{thm:wienerhelson} below, establishes the reversed statement. The first point is due to Wiener, and classifies doubly shift invariant subspaces.
\begin{thm} \label{thm:wienerhelson}
Let $\mathcal{E} \subset l^2(\Z)$ be a shift invariant subspace. Then we have the following
\begin{enumerate}
\item[$\bullet$] If the subspace $\mathcal{E}$ is doubly invariant, then there exists a borel set $A \subset \T$ such that $\mathcal{E}= \mathcal{F} \left( \mathfrak{1}_A  L^2(\T) \right) $, where $\mathfrak{1}_A$ is the indicator function of $A$.
\item[$\bullet$] If the subspace $\mathcal{E}$ is simply invaraint, then there exists a function $F \in L^2(\T)$ verifying $|F(z)|=1$ for almost every $z \in \T$ and such that $\mathcal{E}=\mathcal{F} \left( F H^2(\T) \right)$. 
\end{enumerate}
\end{thm}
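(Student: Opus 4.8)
The plan is to transport the problem to $L^2(\T)$ through the Fourier--Plancherel isomorphism $\mathcal F$ and to work there throughout. Writing $M := \mathcal F^{-1}(\mathcal E)$, the hypothesis that $\mathcal E$ is shift invariant becomes the statement that $M$ is a closed subspace invariant under the multiplication operator $M_z : F \mapsto zF$, which is \emph{unitary} on $L^2(\T)$ because $|z|=1$ there; in particular $zM$ and each $z^nM$ are automatically closed. The dichotomy between doubly and simply invariant then reads $zM = M$ versus $zM \subsetneq M$, and it suffices to establish the two representations for $M$ and push them back through $\mathcal F$.

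For the first (Wiener) point, I would first upgrade double invariance to a full module structure: $zM = M$ forces $M$ to be invariant under $M_z$ and $M_{\bar z} = M_z^{-1}$, hence under every trigonometric polynomial, and then under every $M_\phi$ with $\phi \in L^\infty(\T)$ by approximating $\phi$ boundedly and almost everywhere by trigonometric polynomials and using that $M$ is closed. Consequently the orthogonal projection $P$ onto $M$ commutes with $M_z$ and $M_{\bar z}$ — because both $M$ and $M^\perp$ are invariant under each of them — and therefore with every $M_\phi$. Setting $\varphi := P\mathbf 1$, the fact that $P$ commutes with each $M_p$ on trigonometric polynomials $p$, together with $\mathbf 1 - \varphi \perp M$, yields $(1-\varphi)\overline{\varphi} = 0$ almost everywhere, so $\varphi$ is real and idempotent, i.e. $\varphi = \mathfrak{1}_A$ for a Borel set $A$. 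One then identifies $P = M_{\mathfrak{1}_A}$ by equality on the dense set of trigonometric polynomials, whence $M = \mathfrak{1}_A L^2(\T)$.

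For the second (Helson) point, the restriction $V := M_z|_M$ is an isometry of $M$ with $VM = zM \subsetneq M$, so the Wold decomposition applies: $M = \big(\bigoplus_{n\geq 0} z^n N\big) \oplus M_\infty$, where $N := M \ominus zM \neq \{0\}$ and $M_\infty := \bigcap_{n\geq 0} z^n M$. The arithmetic core is the analysis of $N$. Taking a unit vector $F \in N$, orthogonality of $F$ to $z^n M$ for $n \neq 0$ gives $\langle F, z^n F\rangle = 0$ for all $n \neq 0$, so every nonzero Fourier coefficient of $|F|^2$ vanishes and $|F| = 1$ almost everywhere; the same computation applied to two elements of $N$ shows their quotient is constant, so $\dim N = 1$. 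Finally $M_\infty = \bigcap_{n\geq 1} z^n M = z M_\infty$ is doubly invariant, hence $M_\infty = \mathfrak{1}_B L^2(\T)$ by the first point; since $F \perp M_\infty$ and $|F| = 1$ a.e., the set $B$ must be null, so $M_\infty = \{0\}$. The decomposition then collapses to $M = \overline{\mathrm{span}}\{z^n F : n \geq 0\} = F H^2(\T)$, with $F$ unimodular as required.

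The main obstacle — and the step I would treat most carefully — is the Helson direction, specifically the two facts that feed off each other: producing the unimodular generator $F$ and ruling out a nontrivial doubly invariant remainder $M_\infty$. This is where the logical ordering matters, since the vanishing of $M_\infty$ is deduced by \emph{invoking the already-proved Wiener classification} on $M_\infty$ and confronting it with the unimodularity of $F$. I would also be careful to record at the outset that $M_z$ is unitary on $L^2(\T)$, as this is what guarantees that $zM$, each $z^n M$, and images of closed subspaces remain closed, and it is what makes the Wold decomposition of the isometry $V$ available.
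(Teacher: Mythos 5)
The paper states Theorem \ref{thm:wienerhelson} without proof: it is presented as standard material with a pointer to the first pages of \cite{nikolski1}, so there is no internal argument to compare yours against. Your proposal is the classical proof found in precisely such references, and it is correct. In the Wiener half, the chain (double invariance $\Rightarrow$ invariance of $M$ and $M^\perp$ under $M_z$ and $M_{\bar z}$ $\Rightarrow$ the projection $P$ onto $M$ commutes with these $\Rightarrow$ $\varphi := P\mathbf{1}$ satisfies $(1-\varphi)\overline{\varphi}=0$ a.e. $\Rightarrow$ $\varphi = \mathfrak{1}_A$ and $P = M_{\mathfrak{1}_A}$ by density of trigonometric polynomials) is complete; note that the upgrade to the full $L^\infty(\T)$ module structure, while true, is not actually needed, since commutation with $M_z$ and $M_{\bar z}$ already yields commutation with $M_p$ for every trigonometric polynomial $p$, which is all the computation with $P\mathbf{1}$ uses. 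In the Helson half, the Wold decomposition of the isometry $V = M_z|_M$, the identity $\langle F, z^n F\rangle = \widehat{|F|^2}(n) = 0$ for $n \neq 0$ forcing $|F|=1$ a.e. for a unit wandering vector (and, applied to $G\overline{F}$ for $F,G \in N$, forcing $\dim N = 1$), and the elimination of $M_\infty$ by invoking the already-proved Wiener part — via $\mathfrak{1}_B F \in \mathfrak{1}_B L^2(\T) = M_\infty$, $F \perp M_\infty$, hence $\int_B |F|^2\,dm = m(B) = 0$ — are all in order, and you correctly identify both the logical dependence of the Helson statement on the Wiener one and the role of unitarity of $M_z$ on $L^2(\T)$ in keeping $z^nM$ closed. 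The only ingredient taken as a black box is the Wold decomposition itself, which is standard for Hilbert-space isometries; everything else in your argument is self-contained, so the proposal can stand in for the omitted proof.
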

\subsubsection{The Schur measures revisited} \label{sec:schurmeasuresrevisited}
We now connect the preceding discussion with Schur measures and their limit processes. Let $G$ be a function holomorphic in a neighborhood of the unit circle $\T$ such that $G(z) \in i \R$ for $z \in \T$, i.e. such that $\hat{G}(-k)=- \overline{ \hat{G}(k)}$ for all $k \in \Z$. We set 
\begin{align*}
F(z) = \exp \left( G(z^{-1}) \right).
\end{align*}
We have $|F(z)|=1$ for almost every $z \in \T$.

Let $K_F$ be the kernel of the orthogonal projection onto $\mathcal{E}_F$. By the Macchi-Soshnikov/Shirai-Takahashi Theorem, the kernel $K_F$ serves as a correlation kernel of a determinantal point process. The following Proposition says that the determinantal point process with kernel $K_F$ is a symmetric Schur measure, up to the transformation $X \mapsto -X$, $X \in \Conf(\Z)$.
\begin{prop} \label{prop:shiftschur} Let $\rho : \Lambda \to \C$ be the specialization defined by 
\begin{align*}
\rho(p_k)= k \hat{G}(k), \quad k=1,2,\dots
\end{align*}
and let $\P_\rho$ be the symmetric Schur measure with specialization $\rho$. Let $K_\rho$ be the correlation kernel of the corresponding determinantal point process $\mathfrak{S}^* \P_\rho$ on $\Z$. Then, for all $x, y \in \Z$, we have
\begin{align}
K_F(x,y) = K_\rho(-x,-y).
\end{align}
\end{prop}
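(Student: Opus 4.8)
The plan is to compute both kernels explicitly and match them term by term. First I would feed the relation $\rho(p_k)=k\hat{G}(k)$ into Okounkov's formula (\ref{eq:kernelschur}). Since then $\rho(p_k)/k=\hat{G}(k)$ and $\overline{\rho(p_k)}/k=\overline{\hat{G}(k)}$, the exponent in (\ref{eq:kernelschur}) collapses, by the very definition (\ref{eq:defg}) of $G$, to $G(z)-G(w)$, giving
\[ K_\rho(a,b)=\frac{1}{(2i\pi)^2}\int_{|z|=1+\varepsilon}\int_{|w|=1-\varepsilon}\frac{e^{G(z)-G(w)}}{z-w}\frac{dz\,dw}{z^{a}w^{-b+1}}. \]
Evaluating at $(a,b)=(-x,-y)$ and expanding $\frac{1}{z-w}=\sum_{n\geq 0}w^n z^{-n-1}$, which converges because $|z|=1+\varepsilon>1-\varepsilon=|w|$ on the contours, decouples the double integral into
\[ K_\rho(-x,-y)=\sum_{n\geq 0}\Big(\tfrac{1}{2i\pi}\!\int_{|z|=1+\varepsilon}\! e^{G(z)}z^{x-n-1}dz\Big)\Big(\tfrac{1}{2i\pi}\!\int_{|w|=1-\varepsilon}\! e^{-G(w)}w^{n-y-1}dw\Big). \]
Because $e^{\pm G}$ are holomorphic in a neighbourhood of $\T$, each factor is a Laurent coefficient that may be read off on $\T$ itself.

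Next I would identify these Laurent coefficients with Fourier coefficients of $F$. Writing $e^{G(z)}=\sum_m c_m z^m$, the substitution $z\mapsto 1/z$ in $\hat{F}(k)=\frac{1}{2i\pi}\oint_{\T}z^{-k-1}e^{G(1/z)}dz$ shows $\hat{F}(k)=c_{-k}$, so the first factor above equals $c_{n-x}=\hat{F}(x-n)$. For the second factor, the hypothesis $G(z)\in i\R$ on $\T$ gives $e^{-G(z)}=\overline{e^{G(z)}}$ for $z\in\T$; expanding the conjugate and using $\bar z=z^{-1}$ on $\T$ yields the coefficient identity $[w^{\,j}]e^{-G(w)}=\overline{[w^{-j}]e^{G(w)}}=\overline{c_{-j}}$. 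Applied with $j=y-n$, the second factor equals $\overline{c_{n-y}}=\overline{\hat{F}(y-n)}$. Hence $K_\rho(-x,-y)=\sum_{n\geq 0}\hat{F}(x-n)\,\overline{\hat{F}(y-n)}$.

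Finally I would compute $K_F$ directly from its definition as the kernel of the orthogonal projection onto $\mathcal{E}_F=\mathcal{F}(FH^2(\T))$. Since $|F|=1$ almost everywhere, the family $\{Fz^n\}_{n\geq 0}$ is an orthonormal basis of $FH^2(\T)$, and by Plancherel its image $\psi_n:=\mathcal{F}(Fz^n)$, whose $x$-th coordinate is $\widehat{Fz^n}(x)=\hat{F}(x-n)$, is an orthonormal basis of $\mathcal{E}_F$. The projection therefore has kernel $K_F(x,y)=\sum_{n\geq 0}\psi_n(x)\overline{\psi_n(y)}=\sum_{n\geq 0}\hat{F}(x-n)\,\overline{\hat{F}(y-n)}$, which is exactly the expression obtained for $K_\rho(-x,-y)$, completing the proof.

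The contour manipulations and the $z\mapsto 1/z$ bookkeeping are routine; the one genuinely delicate point is the coefficient identity $[w^{\,j}]e^{-G}=\overline{[w^{-j}]e^{G}}$, which is where the standing assumption that $G$ is purely imaginary on $\T$ (equivalently $\hat{G}(-k)=-\overline{\hat{G}(k)}$) is essential. It is precisely this symmetry that turns the $w$-integral into the complex conjugate of a Fourier coefficient of $F$, and hence makes $K_\rho(-\,\cdot,-\,\cdot)$ a genuine orthogonal projection kernel rather than a general reproducing kernel.
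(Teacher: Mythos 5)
Your proposal is correct and is essentially the paper's own proof run in reverse: the paper starts from the orthonormal-basis expansion $K_F(x,y)=\sum_{n\geq 0}\hat{F}(x-n)\overline{\hat{F}(y-n)}$, rewrites the Fourier coefficients as contour integrals, changes variables $z \mapsto z^{-1}$ (producing the reflection $(-x,-y)$), deforms to the circles $|z|=1+\varepsilon$, $|w|=1-\varepsilon$, and resums the geometric series $\sum_{n\geq 0}(w/z)^n$ into the $\frac{1}{z-w}$ factor of (\ref{eq:kernelschur}), whereas you start from (\ref{eq:kernelschur}) and unfold the identical chain in the opposite direction. Every key ingredient coincides — the identity $F(z^{-1})F^{-1}(w^{-1})=\exp\left(G(z)-G(w)\right)$ coming from $\rho(p_k)=k\hat{G}(k)$, the geometric-series decoupling of $\frac{1}{z-w}$ justified by $|w|<|z|$, the use of $|F|=1$ on $\T$ (equivalently $G \in i\R$ on $\T$, i.e. $\hat{G}(-k)=-\overline{\hat{G}(k)}$) to convert conjugates into $F^{-1}$, and the orthonormal basis $\{Fz^n\}_{n\geq 0}$ of $FH^2(\T)$ — so this is the same argument, not a different route.
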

\begin{proof}
Since the family $\{ (\hat{F}(k-n))_{k \in \Z}, \hspace{0.1cm} n=0,1,\dots \}$ is an orthonormal basis of $\mathcal{E}_F$, we have
\begin{align*}
K_F(x,y) &= \sum_{n =0}^{+ \infty} \hat{F}(x-n) \overline{\hat{F}(y-n)} \\
&= \sum_{n=0}^{+ \infty} \frac{1}{(2  \pi)^2} \int_{-\pi}^\pi e^{-i\theta(x-n)} F(e^{i\theta}) d\theta \int_{-\pi}^\pi e^{i\theta'(y-n)} F^{-1}(e^{i\theta'})d\theta',
\end{align*}
where we used the fact that $|F(z)|=1$ for $z \in \T$. Changing the variable $\theta \mapsto -\theta$, $\theta' \mapsto -\theta'$, we obtain
\begin{align*}
K_F(x,y) &= \sum_{n = 0}^{+ \infty} \frac{1}{(2\pi)^2} \int_{-\pi}^\pi e^{i \theta(x-n)} F(e^{-i \theta}) d\theta \int_{- \pi}^\pi e^{i\theta'(n-y)} F^{-1}(e^{-i \theta'}) d\theta' \\
&= \sum_{n = 0}^{+ \infty} \frac{1}{(2i \pi)^2} \int_{|z|=1} z^{x-n} F(z^{-1})  \frac{dz}{z} \int_{|w|=1} w^{n-y} F^{-1}(w^{-1}) \frac{dw}{w}.
\end{align*}
Since $F$ is holomorphic in a neighborhood of $\T$, we have for $\varepsilon >0$ small enough that
\begin{align*}
K_F(x,y) &=  \sum_{n = 0}^{+ \infty} \frac{1}{(2i \pi)^2} \int_{|z|=1+ \varepsilon} z^{x-n} F(z^{-1})  \frac{dz}{z} \int_{|w|=1- \varepsilon} w^{n-y} F^{-1}(w^{-1}) \frac{dw}{w} \\
&= \frac{1}{(2 i \pi)^2} \int_{|z|=1+ \varepsilon} \int_{|w|=1-\varepsilon} \frac{F(z^{-1})F^{-1}(w^{-1})}{1-w/z} \frac{dzdw}{z^{-x+1}w^{y+1}} \\
&=\frac{1}{(2 i \pi)^2} \int_{|z|=1+ \varepsilon} \int_{|w|=1-\varepsilon} \frac{F(z^{-1})F^{-1}(w^{-1})}{z-w} \frac{dzdw}{z^{-x}w^{y+1}},
\end{align*}
where we used Fubini's Theorem for the second line. Observing that from the definitions of the function $F$ and of the specialization $\rho$ we have
\begin{align*}
F(z^{-1})F^{-1}(w^{-1})&=\exp \left(G(z)-G(w)) \right) \\
&=\exp \left( \sum_{k \geq 1} \hat{G}(k)(z^{k}  -w^k) +\overline{\hat{G}(k)}(w^{-k}-z^{-k} ) \right)  \\
&= \exp \left( \sum_{k \geq 1} \frac{\rho(p_k)}{k}(z^k-w^{k}) + \frac{\overline{\rho(p_k)}}{k}(w^{-k} - z^{-k}) \right),
\end{align*}
and recalling Formula (\ref{eq:kernelschur}) for the kernel $K_\rho$, the proof is complete.
\end{proof}
\begin{rem}As observed in \cite{bufetovolshanski}, Proposition \ref{prop:shiftschur} above remains true for more general symmetric Schur measures, i.e. when the function $G$ is not necessarly holomorphic in a neighborhood of $\T$. We chose to restrict ourselves to the cas when $G$ is holomorphic in a neighborhood of $\T$ in order to enlight the interpretation of Proposition \ref{prop:limschur} in terms of shift invariant subspaces, which we describe below.
\end{rem}
\subsubsection{An interpretation of Proposition \ref{prop:limschur}}
Observe that the limit kernels $\mathcal{S}(u)$ are projection kernels  onto doubly shift invariant subspaces 
\[\mathcal{E}(u):= \mathcal{F} \left( \mathfrak{1}_{\sqcup [e^{i\phi_k(u)},e^{i\psi_k(u)}] } L^2(\T) \right),\] since we have
\begin{align*}
\mathcal{S}(u)(x,y)= \sum_{k=1}^{L(u)} \hat{\mathfrak{1}}_{[e^{i\phi_k(u)},e^{i\psi_k(u)}]}(x-y).
\end{align*}

Proposition \ref{prop:limschur}, and more precisely the convergence (\ref{lim:schur}) establishes thus a link between simply and doubly shift invariant subspaces of $l^2(\Z)$. In a certain regime and in a certain sense, some simply invariant subspaces converge to doubly invariant subspaces. We make this fact precise: if $G$ is a holomorphic function in the neighborhood of $\T$ as above, we can define a function $F_\alpha(z)= \exp \left(\alpha G(z^{-1}) \right)$, and consider the simply invariant subspace $\mathcal{E}_{F_\alpha}$. With a little computation, we have that the kernel 
\[(x,y) \mapsto K_{F_\alpha}(\lfloor \alpha u \rfloor + x, \lfloor \alpha u \rfloor +y)\]
is the kernel of the orthogonal projection onto the subspace $S^{-\lfloor \alpha u \rfloor} (\mathcal{E}_{F_\alpha})$. By Proposition \ref{prop:shiftschur} and slighly adapting the proof of the convergence (\ref{lim:schur}), we have that, as $\alpha \to + \infty$, the space $S^{-\lfloor \alpha u \rfloor} (\mathcal{E}_{F_\alpha})$ converges to $\mathcal{E}(u)$ in the sense that the orthogonal projection onto $S^{-\lfloor \alpha u \rfloor } (\mathcal{E}_{F_\alpha})$ converges to the orthogonal projection onto $\mathcal{E}(u)$ in the strong operator topology.
\subsection{Proof of Lemmas \ref{lem:varschur1} and \ref{lem:varschur2}} \label{sec:proofcovschur}
\subsubsection{Proof of Lemma \ref{lem:varschur1}}
By Proposition \ref{prop:covdet} and since the kernel $K_\alpha$ is symmetric, it suffices to prove that for all $x,y \in \Z$, there exists $C>0$ such that
\begin{align} \label{ineq:covschurkernel}
|K_\alpha(x+\alpha u_1,y+ \alpha u_2) | \leq \frac{C}{\alpha | u_1-u_2|}
\end{align}
for all $u_1,u_2 \in [u_{\min},u_{\max}]$, $u_1 \neq u_2$ and all $\alpha>0$ sufficiently large. As in the proof of Proposition \ref{prop:limschur}, write
\begin{align*}
K_\alpha(x+ \alpha u_1, y+ \alpha u_2) = \frac{1}{(2i \pi)^2} \int_{|z|=1 + \varepsilon} \int_{|w|=1- \varepsilon} \frac{\exp \left(S_{u_1}(z) - S_{u_2}(w) \right) }{z-w} \frac{dzdw}{z^{x+1}w^{-y+1}}.
\end{align*}
We then deform the contour and integrate over $z \in \gamma_{u_1}^<$ and $w \in \gamma_{u_2}^>$ so that
\begin{align*}
\Re S_{u_1}(z) - \Re S_{u_2}(w) <0,
\end{align*}
except at a finite number of points, see Figure 5.
\begin{figure} \label{fig:covschur}
\includegraphics[scale=0.2, trim = 0cm 4cm 0cm 4cm, clip =true]{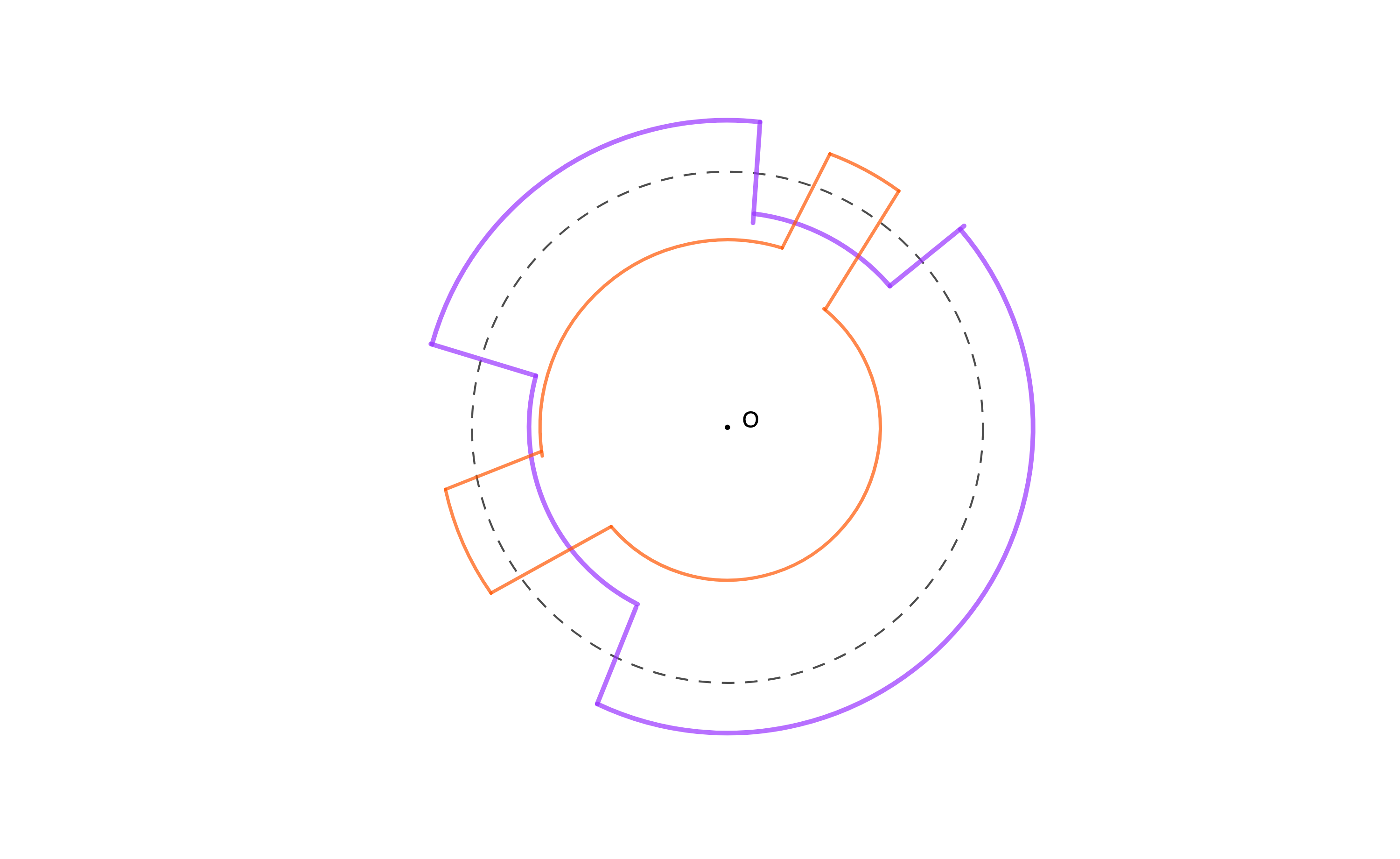}
\caption{The contours $\gamma_{u_1}^<$ and $\gamma_{u_2}^>$.}
\end{figure}In order to recover the value of the kernel $K_\alpha$, we must again integrate the residue at $z=w$ over a finite number of arcs $[e^{i\phi_k},e^{i\psi_k}]$ which depend on $u_1$ and $u_2$. We have
\begin{align*}
K_\alpha(x+ \alpha u_1, y+ \alpha u_2)  = \frac{1}{(2i \pi)^2} \int_{ z \in \gamma_{u_1}^<} \int_{w \in \gamma_{u_2}^>} \cdots + \sum_{k} \frac{1}{2 \pi} \int_{\phi_k}^{\psi_k} e^{i \theta (\alpha u_2 - \alpha u_1 + y -x  - 1)} d\theta.
\end{align*}
By construction, the integral
\begin{align*}
\int_{ z \in \gamma_{u_1}^<} \int_{w \in \gamma_{u_2}^>} \cdots
\end{align*}
is dominated by $\exp(-\alpha^\delta)$, for any $\delta \in (0,1)$. By a direct integration, we have
\begin{align*}
 \left| \int_{\phi_k}^{\psi_k} e^{i \theta (\alpha u_2 - \alpha u_1 + y -x  - 1)} d\theta \right| \leq \frac{C}{\alpha |u_1-u_2|}
\end{align*}
for all $\alpha>0$ sufficiently large and all $u_1 \neq u_2$, where $C$ only depens on $x$ and $y$. We have just established (\ref{ineq:covschurkernel}), and the lemma is proved.
\subsubsection{Proof of Lemma \ref{lem:varschur2}}
Since the points $e^{i\phi_k(u)}$, $e^{i\psi_k(u)}$ are the solutions of the equation $zG'(z)=u$, the function 
\[u \mapsto \mathcal{S}(u)(x,y) \]
is continuous in $u$ for any fixed $x,y \in \Z$. Since $[u_{\min},u_{\max}]$ is compact, this established the proof of Lemma \ref{lem:varschur2}.

\section{Proof of Theorem \ref{thm:ppp}} \label{sec:proofthmpp}
\subsection{Control of the covariances} \label{sec:covpp}
Let $r>0$ and let $\P_r$ be the image by $\mathfrak{S}_2$ of the Schur process $\tilde{\P}_{e^{-r}}$ defined in section \ref{sec:intropp}. We start by stating the lemmas we use for the control of the covariances for $\P_r$, distinguishing the situations where the considered positions are far away from each other or not.
\begin{lem}\label{lemdecK} Let $\mathcal{K} \subset \R^2$ be a compact set, and let $m \subset \E$ be finite. Let $\overline{m}$ denote the supremum norm of $m$  \begin{align*}
\overline{m}=\max \{ |t|,|h|, \hspace{0.1cm} (t,h) \in m \}.
\end{align*}
Then for any $\delta\in (0,1)$, there exists $C$ which only depends on $\delta$, $\mathcal{K}$ and $m$, such that for all $r >0$ sufficiently small and any $(\tau_1,\chi_1),(\tau_2,\chi_2) \in A\cap K$ such that 
\begin{align} \label{conddist}
\max \{ |\tau_1-\tau_2|, |\chi_1-\chi_2| \} > \overline{m}r,
\end{align}
one has 
\begin{align*}
\left| \E_r \left[ c_{\frac{1}{r}(\tau_1,\chi_1)+m}c_{\frac{1}{r}(\tau_2,\chi_2)+m} \right]-\E_r \left[ c_{\frac{1}{r}(\tau_1,\chi_1)+m}\right]\E_r\left[c_{\frac{1}{r}(\tau_2,\chi_2)+m} \right] \right | \leq \frac{C\exp\left(-r^{-\delta}\right)}{|\tau_1-\tau_2|^2}
\end{align*}
when $\tau_1 \neq \tau_2$, and 
\begin{align*}
\left| \E_r \left[ c_{\frac{1}{r}(\tau,\chi_1)+m}c_{\frac{1}{r}(\tau,\chi_2)+m} \right]-\E_r \left[ c_{\frac{1}{r}(\tau,\chi_1)+m}\right]\E_r\left[c_{\frac{1}{r}(\tau,\chi_2)+m} \right] \right | \leq \frac{Cr}{|\chi_1-\chi_2|}
\end{align*}
when $\tau_1=\tau_2=\tau$.
\end{lem}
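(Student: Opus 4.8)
The plan is to reduce the covariance to a product of off-diagonal kernel entries via the determinantal structure, and then to estimate those entries by a steepest-descent analysis of the double contour integral (\ref{defKq}). First I would invoke Proposition \ref{prop:covdet}. The separation hypothesis (\ref{conddist}) places the two clusters $\frac{1}{r}(\tau_1,\chi_1)+m$ and $\frac{1}{r}(\tau_2,\chi_2)+m$ at distance at least $\overline{m}$ (in the original scale) in one coordinate, so that for $r$ small they are disjoint and the covariance is a finite sum of terms, each carrying a factor of the form $K_q(p,p')K_q(p',p)$, where $p$ lies within bounded distance of $\frac{1}{r}(\tau_1,\chi_1)$ and $p'$ within bounded distance of $\frac{1}{r}(\tau_2,\chi_2)$. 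All remaining factors in such a term are kernel entries between points of a single cluster; by Theorem \ref{thmcvokresh} and Proposition \ref{prop:cvpp} these converge to entries of the bounded limit kernel $\mathcal{S}_{z(\tau,\chi)}$ and are thus $O(1)$ uniformly on $\K$. Hence it suffices to bound the "cross products" $|K_q(p,p')K_q(p',p)|$, which is precisely the symmetrization mentioned after Proposition \ref{prop:covdet}: although $K_q$ is not symmetric, only the symmetric combination $K_q(p,p')K_q(p',p)$ enters the covariance.

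Next I would estimate a single cross entry $K_q(t_1,h_1;t_2,h_2)$ with $(t_1,h_1)=\frac{1}{r}(\tau_1,\chi_1)+O(1)$ and $(t_2,h_2)=\frac{1}{r}(\tau_2,\chi_2)+O(1)$ by steepest descent on (\ref{defKq}), reusing the asymptotics of the $q$-Pochhammer symbols as $q=e^{-r}\to1$ that underlie Proposition \ref{prop:cvpp}: one has $r\log\Phi(\tau/r,z)\to\mathcal{S}(\tau,z)$ for an explicit (dilogarithmic) action $\mathcal{S}$, whose saddle points in $z$ (resp. $w$) are $z(\tau_1,\chi_1),\overline{z(\tau_1,\chi_1)}$ (resp. $z(\tau_2,\chi_2),\overline{z(\tau_2,\chi_2)}$), lying on the circles $C(0,e^{-\tau_1/2})$ and $C(0,e^{-\tau_2/2})$. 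I would deform the $z$- and $w$-contours onto the corresponding steepest-descent paths, keeping them at distance $r^{\delta}$ from the saddles so that the bulk double integral is $O(\exp(-r^{-\delta}))$, exactly as in the proof of Proposition \ref{prop:limschur}.

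The analysis then splits according to whether the pole $z=w$ is crossed during the deformation, and this is where the two regimes of the statement emerge. When $\tau_1\neq\tau_2$ the two saddle circles have distinct radii, separated by $|e^{-\tau_1/2}-e^{-\tau_2/2}|\geq c\,|\tau_1-\tau_2|$ for a constant $c=c(\K)>0$; the factor $\frac{1}{z-w}$ in (\ref{defKq}) is then bounded by $C/|\tau_1-\tau_2|$ along the deformed contours, contributing $1/|\tau_1-\tau_2|$ per cross entry and $1/|\tau_1-\tau_2|^2$ to the product. Moreover, any residue picked up at $z=w$ is an integral whose integrand carries the genuinely exponential factor $\exp\left(\frac{1}{r}\left(\mathcal{S}(\tau_1,z)-\mathcal{S}(\tau_2,z)\right)\right)$, whose real part can be made strictly negative on a contour at distance $r^{\delta}$ from the critical points, so the residue is again $O(\exp(-r^{-\delta}))$. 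Combining these gives $C\exp(-r^{-\delta})/|\tau_1-\tau_2|^2$. When $\tau_1=\tau_2=\tau$, the saddles $z(\tau,\chi_1),z(\tau,\chi_2)$ sit on the same circle but at different arguments, with $|z(\tau,\chi_1)-z(\tau,\chi_2)|\geq c|\chi_1-\chi_2|$; now the residue integrand loses its $1/r$-exponential and reduces to an oscillatory integral $\int z^{\,h_2-h_1+O(1)}\,\frac{dz}{z}$ with $h_1-h_2\approx(\chi_1-\chi_2)/r$ large, which decays like $r/|\chi_1-\chi_2|$ by integration by parts. Bounding one cross entry by this sine-kernel decay and the other by the uniform bound yields $Cr/|\chi_1-\chi_2|$.

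I expect the main obstacle to be the bookkeeping of the contour deformation in the non-symmetric, time-ordered kernel (\ref{defKq}): for each of the two cross entries, and for each sign choice $|z|=1\pm\varepsilon$ dictated by the order of $t_1,t_2$, one must track whether the pole $z=w$ is crossed and verify the sign of $\Re\left(\mathcal{S}(\tau_1,\cdot)-\mathcal{S}(\tau_2,\cdot)\right)$ on the relevant arcs, so that both the bulk integral and any residue are exponentially small when $\tau_1\neq\tau_2$, while isolating the oscillatory residue responsible for the slower $r/|\chi_1-\chi_2|$ decay at equal times. Maintaining uniformity of all constants over $(\tau_i,\chi_i)\in A\cap\K$ throughout is where compactness of $\K$ is essential.
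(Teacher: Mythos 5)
Your outline reproduces the paper's argument in all structural respects: reduction via Proposition \ref{prop:covdet} to products of cross entries between the two clusters (the within-cluster factors being bounded uniformly through Propositions \ref{prop:cvpp} and \ref{propcont}), then steepest descent on the resulting quadruple contour integral, with the two regimes arising exactly as you describe --- distinct circles $\gamma_{\tau_1}$, $\gamma_{\tau_2}$ producing the bound $\exp(-r^{-\delta})/|\tau_1-\tau_2|^2$, and at equal times an oscillatory residue over an arc of $\gamma_\tau$ with phase of order $(\chi_1-\chi_2)/r$, estimated by integration by parts to give $r/|\chi_1-\chi_2|$.

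One step, as written, would fail. In the equal-time case you propose to bound ``one cross entry by the sine-kernel decay and the other by the uniform bound.'' Neither cross entry is individually bounded: both the residue and the bulk double integral for $K_q\left(\frac{1}{r}(\tau,\chi_1)+\cdot\,;\frac{1}{r}(\tau,\chi_2)+\cdot\right)$ carry a factor $\exp\left(\frac{\tau}{2r}(\chi_2-\chi_1)\right)$ (and its reciprocal for the entry in the reverse direction), coming from the constant values $-\frac{\tau}{2}(\tau/2+\chi_i)$ of $\Re S$ on $\gamma_\tau$; depending on the sign of $\chi_2-\chi_1$, one of the two entries is exponentially \emph{large} in $1/r$. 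This is precisely the asymmetry that Proposition \ref{prop:covdet} is meant to neutralize: the paper expands the product $(\text{bulk}+\text{residue})\times(\text{bulk}+\text{residue})$ and verifies in (\ref{ineqres})--(\ref{ineqint2}) that the exponential factors cancel pairwise, leaving $r^2/|\chi_1-\chi_2|^2$ from residue$\times$residue and $Cr/|\chi_1-\chi_2|$ from the cross terms. Your per-entry phrasing is repairable --- perform first the gauge transformation of Remark \ref{rem:gaugetransform} removing the $e^{\frac{\tau}{2}\Delta h}$-type factor, noting that every covariance term from Proposition \ref{prop:covdet} is gauge invariant --- but without that normalization the claimed ``uniform bound'' on a single entry does not exist. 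A second, smaller discrepancy: for $\tau_1\neq\tau_2$ you allow a residue to be picked up and assert that $\Re\left(S(\cdot;\tau_1,\chi_1)-S(\cdot;\tau_2,\chi_2)\right)$ can be made strictly negative on the residue contour; this is unjustified, as the difference of the two actions has no definite sign off their respective circles. In the paper this issue does not arise: the deformations are performed in separate variables onto descent contours near the two distinct circles, so the pole $z=w$ is never crossed and no residue appears in the unequal-time case.
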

The next lemma we need is obtained as a simple corollary of Proposition \ref{propcont} below.
\begin{lem}\label{lemdiag}
For any compact $K \subset \R$ and any finite subsets $m,m' \subset E$, there exists $C$ such that for any $(\tau,\chi) \in A\cap K$, and any sufficiently small $r>0$ 
\begin{align*}
\left| \E_r \left[ c_{\frac{1}{r}(\tau,\chi)+m} c_{\frac{1}{r}(\tau,\chi)+m'}\right] - \E_r \left[ c_{\frac{1}{r}(\tau,\chi)+m}\right] \E_r \left[ c_{\frac{1}{r}(\tau,\chi)+m'} \right] \right| \leq C. 
\end{align*}
\end{lem}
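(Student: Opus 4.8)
The plan is to bound the covariance of two coincident-time patterns uniformly, using the determinantal structure together with Proposition \ref{prop:covdet} and the claimed continuity/boundedness statement of Proposition \ref{propcont}. Since $\P_r$ is a determinantal point process with kernel $K_{e^{-r}}$, whenever the patterns $\frac{1}{r}(\tau,\chi)+m$ and $\frac{1}{r}(\tau,\chi)+m'$ are disjoint, Proposition \ref{prop:covdet} expresses the covariance as a finite sum (the number of terms depending only on $|m|$ and $|m'|$) of products of entries of the form $K_{e^{-r}}(p_i,q_j)K_{e^{-r}}(q_{j'},p_{i'})$, with $p$'s drawn from one shifted pattern and $q$'s from the other, times other bounded kernel entries. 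Thus the covariance is controlled by a fixed number of products of kernel values, and the whole estimate reduces to a uniform bound on $|K_{e^{-r}}(\cdot,\cdot)|$ for the relevant (boundedly many) pairs of arguments as $(\tau,\chi)$ ranges over the compact $A\cap K$ and $r\to 0$.

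The key step is therefore to establish that these kernel entries stay uniformly bounded. I would invoke Proposition \ref{propcont} (stated to follow), which I expect to assert precisely that for $(\tau,\chi)$ in a compact subset of $A$ the scaled kernel values $K_{e^{-r}}\bigl(\frac{1}{r}(\tau,\chi)+a,\frac{1}{r}(\tau,\chi)+b\bigr)$, for $a,b$ in a fixed finite set, converge (and in particular remain bounded) as $r\to 0$. The mechanism behind such a proposition is the double-contour-integral formula (\ref{defKq}): after the scaling substitution, the integrand concentrates via a saddle-point/steepest-descent analysis governed by the action whose critical points are $z(\tau,\chi)$ and its conjugate, and the leading contribution is exactly the extended sine kernel $\mathcal{S}_{z(\tau,\chi)}$ of (\ref{defdynsin}), which is continuous in $(\tau,\chi)\in A$. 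Compactness of $A\cap K$ then upgrades pointwise boundedness to a uniform bound, yielding a constant $C$ independent of $(\tau,\chi)$ and of small $r$.

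With that uniform kernel bound in hand, the covariance is a finite (pattern-size-dependent) sum of products of uniformly bounded kernel entries, hence bounded by a constant $C$ depending only on $K$, $m$, and $m'$, which is exactly the claim. The one case requiring separate care is when the two shifted patterns are \emph{not} disjoint, i.e.\ $\frac{1}{r}(\tau,\chi)+m$ and $\frac{1}{r}(\tau,\chi)+m'$ share points; there $c_{m}c_{m'}\neq c_{m\sqcup m'}$ and Proposition \ref{prop:covdet} does not apply verbatim. In that situation I would note that $\E_r[c_{\frac{1}{r}(\tau,\chi)+m}c_{\frac{1}{r}(\tau,\chi)+m'}]=\E_r[c_{\frac{1}{r}(\tau,\chi)+(m\cup m')}]$ is itself a single determinant of uniformly bounded kernel entries, and each of the two product terms is likewise a bounded determinant, so the covariance is trivially bounded by a constant; for small enough $r$ the overlap structure is fixed, so only finitely many such configurations arise.

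The main obstacle is the uniformity of the kernel bound over the compact set $A\cap K$, delivered by Proposition \ref{propcont}: the steepest-descent estimate must be controlled uniformly as $(\tau,\chi)$ approaches the boundary of $A$, where the two saddle points $z(\tau,\chi)$ and $\overline{z(\tau,\chi)}$ coalesce onto the real axis and the simple saddle-point regime degenerates. Restricting to a compact $K$ (so that $A\cap K$ stays a positive distance from the degenerate boundary, or handling the near-boundary regime via the explicit continuity of $\mathcal{S}_{z(\tau,\chi)}$) is what makes the argument go through, and this is precisely why the lemma is phrased over compact sets and deferred to Proposition \ref{propcont}.
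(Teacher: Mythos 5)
Your proof is correct, but it takes a genuinely different route from the paper's. The paper's argument is two lines and stays entirely at the level of correlation functions: since $c_{\frac{1}{r}(\tau,\chi)+m}\,c_{\frac{1}{r}(\tau,\chi)+m'}=c_{\frac{1}{r}(\tau,\chi)+(m\cup m')}$, Proposition \ref{prop:cvpp} applied to the three patterns $m\cup m'$, $m$, $m'$ replaces the finite-$r$ covariance by the limit covariance $\E_{(\tau,\chi)}[c_m c_{m'}]-\E_{(\tau,\chi)}[c_m]\E_{(\tau,\chi)}[c_{m'}]$ up to $O(r)$ uniformly on $A\cap\mathcal{K}$, and Proposition \ref{propcont} (continuity of $(\tau,\chi)\mapsto\mathcal{S}_{\tau,\chi}(\Delta t,\Delta h)$, hence of the limit correlation functions) bounds that limit covariance on the compact. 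You instead expand the covariance via Proposition \ref{prop:covdet} and reduce everything to uniform bounds on individual kernel entries $K_{e^{-r}}\bigl(\frac{1}{r}(\tau,\chi)+a,\frac{1}{r}(\tau,\chi)+b\bigr)$; this works, and you correctly isolate and repair the non-disjoint case (where Proposition \ref{prop:covdet} does not apply and one uses $c_m c_{m'}=c_{m\cup m'}$ directly) — a case the paper's formulation absorbs automatically. Two caveats on your version: first, Proposition \ref{propcont} as actually stated asserts only continuity of the limit kernel, not the uniform convergence of scaled kernel entries you attribute to it; the entrywise asymptotics you need, including the bounded gauge factor $e^{-\tau(\Delta h-\Delta t/2)}$, are contained in the proof of Proposition \ref{prop:cvpp}, so your invocation is fixed by citing that proof instead — and note this step is slightly delicate because $K_q$ is not symmetric, which is precisely why the paper reserves Proposition \ref{prop:covdet} for the off-diagonal Lemma \ref{lemdecK}, where only the paired cross products of entries decay. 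Second, your parenthetical that compactness keeps $A\cap K$ a positive distance from $\partial A$ is false in general ($A$ is open and $K$ arbitrary), though harmless here since Proposition \ref{prop:cvpp} is claimed uniformly over $A\cap\mathcal{K}$ and only boundedness, not a rate, is required. Finally, a sanity check that puts both arguments in perspective: the $c$'s are $\{0,1\}$-valued, so any covariance of two of them is trivially at most $1$ in absolute value; the lemma is organizational, and what your longer route buys, at the price of kernel-level estimates, is independence from the uniformity claim in Proposition \ref{prop:cvpp} near $\partial A$, whereas the paper's route buys brevity and immunity to the non-symmetry of $K_q$.
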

\subsection{Proof of Theorem \ref{thm:ppp}} \label{sec:proofllnpp}
Let $\mathcal{K} \subset \R^2$ be a compact containing the support of $f$. We denote by $A_{r,\mathcal{K}}$ the set 
\begin{align*}
A_{r,\mathcal{K}}=r^{-1}(A \cap \mathcal{K} ) \cap E.
\end{align*}
By Proposition \ref{prop:cvpp}, there exists $C>0$ such that
\begin{align*}
 \left| \E_r \Sigma( f,m,r) - r^2 \sum_{(t,h) \in A_{r,\mathcal{K}}} f(rt,rh)  \E_{(rt,rh)} \left( c_{m} \right) \right| \lesssim C |A_{r,\mathcal{K}}| r^{3},
\end{align*}
where $|A_{r,\mathcal{K}}|$ is the cardinality of $A_{r,\mathcal{K}}$. We first remark that
\begin{align} \label{estimArK}
|A_{r,\mathcal{K}}| = O \left( r^{-2} \right),
\end{align}
which implies 
\begin{align*}
\E_{r} \Sigma (f,m,r)= r^2 \sum_{(t,h) \in A_{r,\mathcal{K}}} f(rt,rh) \E_{(rt,rh)} \left(c_{m}\right) + O(r).
\end{align*}
Observing then that 
\begin{align*}
r^2 \sum_{(t,h) \in A_{r,\mathcal{K}}} f(rt,rh) \E_{(rt,rh)} \left(c_{m}\right)
\end{align*}
is a Riemann sum for the integral $I(f,m)$, we obtain that 
\begin{align*}
\lim_{r \rightarrow 0} \E_{r} \Sigma(f,m,r) = I(f,m).
\end{align*}
By the Chebyshev inequality, it suffices now to prove that 
\begin{align} \label{var0}
\text{Var}_{r} \left( \Sigma(f,m,r) \right) \rightarrow 0 
\end{align}
as $r$ tends to $0$, where 
\begin{multline} \label{variance}
\text{Var}_{r} \left( \Sigma(f,m,r) \right) = \E_r \left[ \left(\Sigma(f,m,r) - \E_r \Sigma(f,m,r) \right)^2 \right] \\
=r^4 \sum_{(t_1h_1),(t_2,h_2) \in A_{r,\mathcal{K}}}  f(rt_1,rh_1)f(rt_2,rh_2) \\
\times \left(\E_r ( c_{(t_1,h_1)+m}c_{(t_2,h_2)+m} ) - \E_r ( c_{(t_1,h_1)+m} ) \E_r ( c_{(t_2,h_2)+m} ) \right).
\end{multline}
We set $ \overline{m}=\max\{ |t|,|h|, \hspace{0.1cm} (t,h) \in m \}$, and we partition $A_{r,\mathcal{K}}^2$ into three sets 
\begin{align*}
A_{r,\mathcal{K}}^2=A_{r,\mathcal{K}}^{>} \sqcup A_{r,\mathcal{K}}^{>=} \sqcup A_{r,\mathcal{K}}^{\leq},
\end{align*}
where :
\begin{align*}
A_{r,\mathcal{K}}^{>} &=\left\{ (t_1,h_1), (t_2,h_2) \in A_{r,\mathcal{K}}, \hspace{0.1cm} \max\{ |t_1-t_2|, |h_1-h_2|\} > \overline{m}, \hspace{0.1cm} t_1 \neq t_2 \right\}, \\
A_{r,\mathcal{K}}^{>=} &= \left\{ (t,h_1), (t,h_2) \in A_{r,\mathcal{K}}, \hspace{0.1cm} |h_1-h_2| > \overline{m} \right\},\\
A_{r,\mathcal{K}}^{\leq} &= A_{r,\mathcal{K}}^2 \setminus \left(A_{r,\mathcal{K}}^{>} \sqcup A_{r,\mathcal{K}}^{>=} \right) = \left\{ (t_1,h_1),(t_2,h_2) \in A_{r,\mathcal{K}}, \hspace{0.1cm} \max\{ |t_1-t_2|, |h_1-h_2|\} \leq \overline{m} \right\}.
\end{align*}
We first estimate the variance (\ref{variance}) by 
\begin{multline} \label{estimvar}
\text{Var}_{r} \left( \Sigma(f,m,r) \right) 
\leq C r^4 \left( \sum_{\left((t_1h_1),(t_2,h_2)\right) \in A_{r,\mathcal{K}}^>}  \left|\E_r ( c_{(t_1,h_1)+m}c_{(t_2,h_2)+m} ) - \E_r ( c_{(t_1,h_1)+m} ) \E_r ( c_{(t_2,h_2)+m} ) \right| \right. \\
+ \sum_{ \left((t,h_1),(t,h_2)\right) \in A_{r,\mathcal{K}}^{>=} } \left|\E_r ( c_{(t,h_1)+m}c_{(t,h_2)+m} ) - \E_r ( c_{(t,h_1)+m} ) \E_r ( c_{(t,h_2)+m} ) \right| \\
\left. + \sum_{\left((t_1h_1),(t_2,h_2)\right) \in A_{r,\mathcal{K}}^{\leq}}  \left|\E_r ( c_{(t_1,h_1)+m}c_{(t_2,h_2)+m} ) - \E_r ( c_{(t_1,h_1)+m} ) \E_r ( c_{(t_2,h_2)+m} ) \right|\right),
\end{multline}
where $C$ only depends on $f$. Let $(t_1,h_1),(t_2,h_2) \in A_{r,\mathcal{K}}$. By definition, there exists $(\tau_1,\chi_1),(\tau_2,\chi_2) \in A \cap K \cap rE$ such that :
\begin{align*}
(t_1,h_1)=\frac{1}{r}(\tau_1,\chi_1), \hspace{0.1cm} (t_2,h_2) =\frac{1}{r}(\tau_2,\chi_2).
\end{align*}
We first consider the case when $\left((t_1,h_,),(t_2,h_2)\right) \in A_{r,\mathcal{K}}^>$. The corresponding points $(\tau_1,\chi_1),(\tau_2,\chi_2)$ satisfy condition (\ref{conddist}), and thus by Lemma \ref{lemdecK}, we have in particular the estimate 
\begin{align*}
\left| \E_r \left[ c_{\frac{1}{r}(\tau_1,\chi_1)+m}c_{\frac{1}{r}(\tau_2,\chi_2)+m} \right]-\E_r \left[ c_{\frac{1}{r}(\tau_1,\chi_1)+m}\right]\E_r\left[c_{\frac{1}{r}(\tau_2,\chi_2)+m} \right] \right | \leq Cr,
\end{align*}
where $C$ is uniform. Since 
\begin{align*}
A_{r,\mathcal{K}}^> = O \left( r^{-4} \right),
\end{align*}
we obtain that 
\begin{align} \label{estimcov1}
\sum_{\left((t_1h_1),(t_2,h_2)\right) \in A_{r,\mathcal{K}}^>}  \left|\E_r ( c_{(t_1,h_1)+m}c_{(t_2,h_2)+m} ) - \E_r ( c_{(t_1,h_1)+m} ) \E_r ( c_{(t_2,h_2)+m} ) \right| \leq Cr^{-3}
\end{align}
where $C$ only depends on $\mathcal{K}$ and $m$. 
\par
In the case when $\left((t_1,h_,),(t_2,h_2)\right) \in A_{r,\mathcal{K}}^{>=}$, we have by Lemma \ref{lemdecK} that 
\begin{align*}
\left| \E_r \left[ c_{\frac{1}{r}(\tau_1,\chi_1)+m}c_{\frac{1}{r}(\tau_2,\chi_2)+m} \right]-\E_r \left[ c_{\frac{1}{r}(\tau_1,\chi_1)+m}\right]\E_r\left[c_{\frac{1}{r}(\tau_2,\chi_2)+m} \right] \right | \leq C
\end{align*}
where $C$ is uniform. Since 
\begin{align*}
|A_{r,\mathcal{K}}^{<=}| = O \left(r^{-3} \right), 
\end{align*}
we have 
\begin{align}\label{estimcov2}
\sum_{(t,h_1),(t,h_2) \in A_{r,\mathcal{K}}^{>=}}  \left|\E_r ( c_{(t,h_1)+m}c_{(t,h_2)+m} ) - \E_r ( c_{(t,h_1)+m} ) \E_r ( c_{(t,h_2)+m} ) \right| \leq Cr^{-3},
\end{align}
where $C$ only depends on $\mathcal{K}$ and $m$.\\

When $\left((t_1,h_1),(t_2;h_2)\right) \in A_{r,\mathcal{K}}^{\leq}$, there exists finite subsets $m',m'' \subset E$ and $(\tau,\chi) \in A\cap K$ such that 
\begin{align*}
(t_1,h_1)+m= \frac{1}{r}(\tau,\chi) +m', \hspace{0.1cm} (t_2,h_2)+m = \frac{1}{r}(\tau,\chi) +m''.
\end{align*}
Observe that there is only a finite number of possible sets $m'$ and $m''$.Thus, by Lemma \ref{lemdiag}, we have 
\begin{align*}
\left| \E_r \left[ c_{\frac{1}{r}(\tau,\chi)+m'} c_{\frac{1}{r}(\tau,\chi)+m''}\right] - \E_r \left[ c_{\frac{1}{r}(\tau,\chi)+m'}\right] \E_r \left[ c_{\frac{1}{r}(\tau,\chi)+m''} \right] \right| \leq C
\end{align*}
where $C$ is uniform. Since 
\begin{align*}
|A_{r,\mathcal{K}}^{\leq} |= O \left( r^{-2} \right),
\end{align*}
we have 
\begin{align} \label{estimcov3}
\sum_{(t,h_1),(t,h_2) \in A_{r,\mathcal{K}}^{\leq} }\left|   \E_r ( c_{(t,h_1)+m}c_{(t,h_2)+m} ) - \E_r ( c_{(t,h_1)+m} ) \E_r ( c_{(t,h_2)+m} ) \right|  \leq Cr^{-2}
\end{align}
Thus, recalling the estimation (\ref{estimvar}), the inequalities (\ref{estimcov1}), (\ref{estimcov2}) and (\ref{estimcov3}) establish (\ref{var0}). Theorem \ref{thm:ppp} is proved, assuming Proposition \ref{prop:cvpp} and Lemmas \ref{lemdecK} and \ref{lemdiag}.
\subsection{Proof of Proposition \ref{prop:cvpp}} \label{sec:proofproplimpp}We here follow the proof of \cite{okounkovreshetikhin}, giving the error terms in the asymptotics we use. We define the dilogarithm function as being the analytic continuation of the series 
\begin{align*}
\text{dilog}(1-z)=\sum_{n \geq 1} \frac{z^n}{n^2}, \quad |z|<1,
\end{align*}
with a cut along the half-line $(1,+\infty)$. We first prove that 
\begin{align} \label{estimdilog}
-\log(z,q)_\infty = r^{-1}\text{dilog}(1-z) + O(1).
\end{align}
as $q=e^{-r}$ tends to $1^-$. Indeed, we have 
\begin{align*}
\log(z,q)_{\infty}&=\sum_{k \geq 0} \log(1-zq^k) = -\sum_{k \geq 0} \sum_{n \geq 1} \frac{z^nq^{nk}}{n} \\
&=-\sum_{n \geq 1} \frac{z^n}{n} \sum_{k \geq 0} q^{nk} =-\sum_{n \geq 0} \frac{z^n}{n} \frac{1}{1-q^n}.
\end{align*}
With $q=e^{-r}$, we have 
\begin{align*}
\frac{r}{1-e^{-nr}}=\frac{1}{n -n^2r + ...}=\frac{1}{n}(1+nr+...),
\end{align*}
and thus 
\begin{align*}
\left| \frac{z^n}{n}\left(\frac{1}{1-e^{-nr}}- \frac{1}{n}\right)\right| \leq rn|z|^n,
\end{align*}
which establishes (\ref{estimdilog}). 

Let $\mathcal{K} \subset \R^2$ be compact and let $(\tau,\chi) \in A\cap \mathcal{K}$. We assume that $\tau \geq 0$, see \ref{secrk} below for the case $\tau \leq 0$.  We introduce the function 
\begin{align*}
S(z;\tau,\chi)=-(\tau/2+ \chi)\log(z)-\dilog(1-1/z)+ \dilog(1-e^{-\tau}z),
\end{align*}
and denote by  $\gamma_\tau$ the circle 
\begin{align*}
\gamma_\tau = \{ z \in \C, \hspace{0.1cm} |z|=e^{\tau/2} \}.
\end{align*}
By the estimate (\ref{estimdilog}) and formula (\ref{defKq}), we have that, for all $z$ and $w$ sufficiently closed to $\gamma_\tau$ 
\begin{multline} \displaystyle
\left| \frac{\Phi(\tau/r+ t_1,z)}{\Phi(\tau/r+t_2,w)}\frac{1}{z^{\chi/r+h_1+\tau/2r+(t_1+1)/2}w^{-\chi/r-h_2-\tau/2r-(t_2+1)/2}} \right| \\
=  \exp \left( \frac{1}{r} \left( \mathfrak{R} S(z;\tau,\chi)-\mathfrak{R}S(w;\tau,\chi) \right) + O(1)  \right) ,
\end{multline}
where the $O(1)$ term only depens on $\mathcal{K}$, $(t_1,h_1)$ and $(t_2,h_2)$. An observation made in \cite{okounkovreshetikhin} states that the real part of $S$ on the circle $\gamma_\tau$ is constant, namely 
\begin{align} \label{reS}
\mathfrak{R}S(z;\tau,\chi)=-\frac{\tau}{2}(\tau/2+\chi),\quad z \in \gamma_\tau.
\end{align}
It is also shown in \cite{okounkovreshetikhin} that, since $(\tau,\chi) \in A$, the function $S$ has two distinct critical points on $\gamma_\tau$ : $e^{\tau}z(\tau,\chi)$ and its complex conjugate. The computation of the gradient of the real part of $S$ on $\gamma_\tau$ lead then the authors of \cite{okounkovreshetikhin} to deform the circle $\gamma_\tau$ into simple contours $\gamma_\tau^>$ and $\gamma_\tau^<$, both crossing the two critical points and verifying 
\begin{equation}
\begin{split}
z \in \gamma_\tau^> \Rightarrow \mathfrak{R}S(z;\tau,\chi) \geq -\frac{\tau}{2}(\tau/2+\chi), \\
z \in \gamma_\tau^< \Rightarrow \mathfrak{R}S(z;\tau,\chi) \leq -\frac{\tau}{2}(\tau/2+\chi),
\end{split}
\end{equation}
with equality only for $z\in \left\{e^\tau z(\tau,\chi), e^\tau \overline{z(\tau,\chi)} \right\}$, see Figure 6.
\begin{figure}[!h] \label{fig1}
\centering
\includegraphics[scale=0.5,clip=true,trim=0cm 0cm 0cm 0cm]{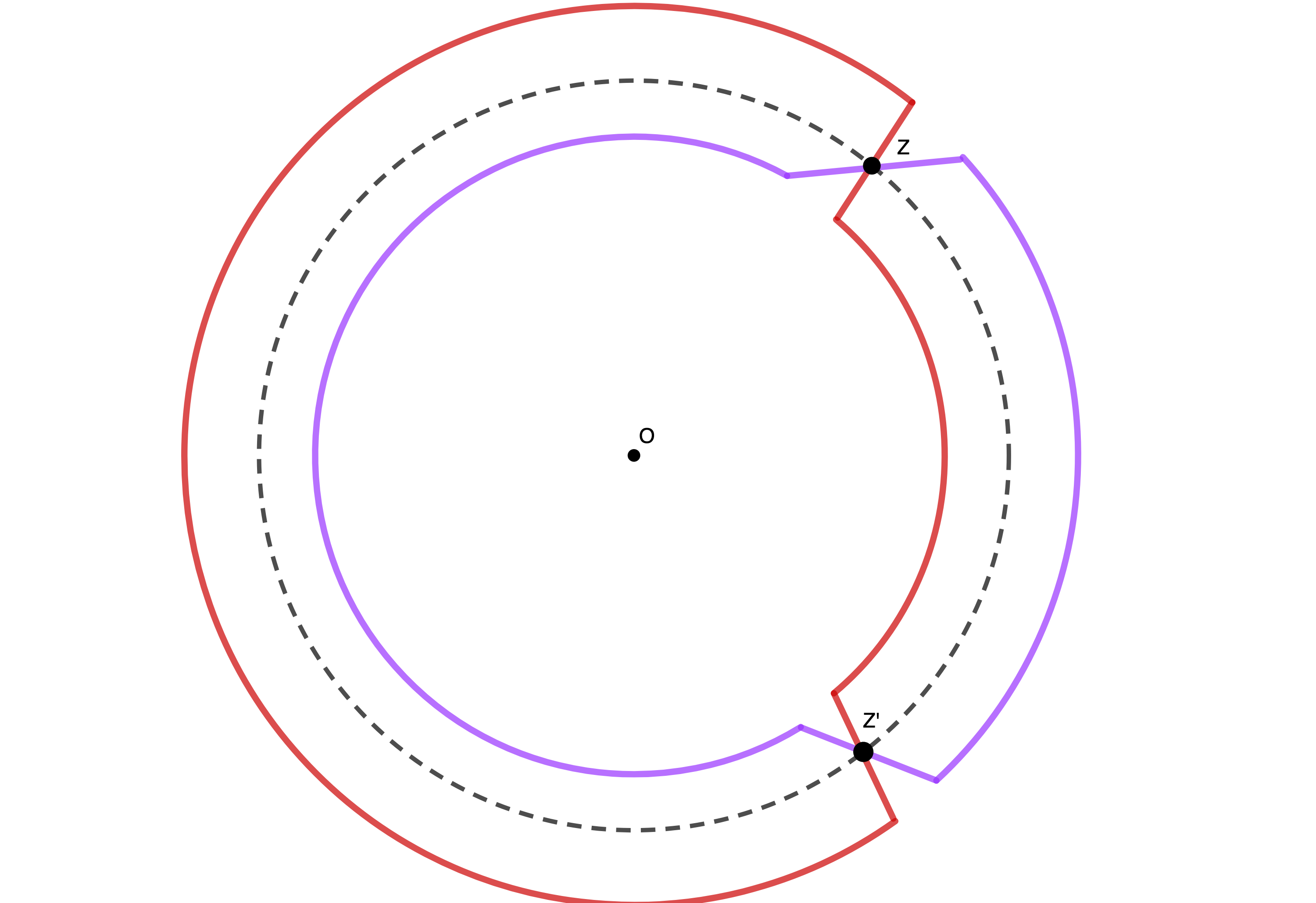}
\caption{The contours $\gamma_\tau^>$ and $\gamma_\tau^<$ are the thick contours and the circle $\gamma_\tau$ is the dotted circle.}
\end{figure}
These simple facts imply that the integral 
\begin{align} \label{intdef}
 \int_{z \in \gamma_\tau^<} \int_{w \in \gamma_\tau^>} \exp \left( \frac{1}{r} \left( S(z;\tau,\chi)-S(w;\tau,\chi) \right)\right) \frac{dzdw}{z-w}
\end{align}
goes to zero as $r$ tends to zero. Actually, the dominated convergence theorem implies that the integral (\ref{intdef}) is $O\left(\exp\left(-r^{-\delta}\right)\right)$ for any $\delta \in (0,1)$.

Picking the residue at $z=w$, we arrive at 
\begin{multline} \label{sumKq}
\mathcal{K}_{e^{-r}} \left( \frac{\tau}{r} + t_1, \frac{\chi}{r} +h_1 ; \frac{\tau}{r} + t_2, \frac{\chi}{r} +h_2\right)\\
 = \frac{1}{(2i\pi)^2} \int_{z \in \gamma_\tau^<} \int_{w \in \gamma_\tau^>} \exp \left( \frac{1}{r} \left( S(z;\tau,\chi)-S(w;\tau,\chi) \right)+ O(1) \right) \frac{dzdw}{z-w} \\
+ \frac{1}{2i\pi} \int_{e^\tau \overline{z(\tau,\chi)}}^{e^\tau z(\tau,\chi)} \frac{(q^{1/2+\tau/r+t_2}w;q)_\infty}{(q^{1/2+ \tau/r +t_1};q)_\infty} \frac{dw}{w^{h_1-h_2+(t_1-t_2)/2}},
\end{multline}
where the path of integration for the second integral crosses the interval $(0,e^\tau)$ for $t_1\geq t_2$ and the half-line $(-\infty,0)$ otherwise. By the preceding discussion, the first integral rapidly tends to zero. Observe now that 
\begin{align*}
\frac{(q^{1/2+\tau/r+t_2}w;q)_\infty}{(q^{1/2+ \tau/r +t_1};q)_\infty} = (1+O(r))\left(1-e^{-\tau}w\right)^{t_1-t_2},
\end{align*}
where the $O(r)$ term only depends on $\mathcal{K}$, $t_1$ and $t_2$. Performing the change of variable $w \mapsto e^{-\tau}w$ in the second integral of (\ref{sumKq}), we arrive at 
\begin{multline*} 
\frac{1}{2i\pi} \int_{e^\tau \overline{z(\tau,\chi)}}^{e^\tau z(\tau,\chi)} \frac{(q^{1/2+\tau/r+t_2}w;q)_\infty}{(q^{1/2+ \tau/r +t_1};q)_\infty} \frac{dw}{w^{h_1-h_2+(t_1-t_2)/2}} \\
= \left( 1 +O(r) \right) e^{-\tau(h_1-h_2-(t_1-t_2)/2)} \mathcal{S}_{\tau,\chi }( t_1-t_2,h_1-h_2).
\end{multline*}
The factor $e^{-\tau(h_1-h_2-(t_1-t_2)/2)}$ can be ignored, see Remark \ref{rem:gaugetransform}. Proposition \ref{prop:cvpp} is proved.
\subsection{Proof of Lemmas \ref{lemdecK} and  \ref{lemdiag}} \label{sec:proofcovpp}
\subsubsection{A remark and a proposition} \label{secrk}
For $\tau <0$, one has to replace the function $S$ by 
\begin{align*}
\tilde{S}(z,\tau,\chi)=-(|\tau|/2+\chi)\log(z)-\dilog(1-z)+\dilog(1-e^{-|\tau|}/z).
\end{align*}
The function $\tilde{S}$ innerhits the same properties than the function $S$: it is constant on the circle $\gamma_{|\tau|}$ and has two complex conjugated critical points on this circle provided $(\tau,\chi) \in A$. This is why we will only consider positve values of $\tau$ in the sequel.\\

The critical points of $S$ are the roots of the quadratic polynomial 
\begin{align*}
(1-1/z)(1-e^{-\tau}z)=e^{-\tau/2-\chi}.
\end{align*}
For this reason, we have the following proposition :
\begin{prop} \label{propcont}
For any fixed $(\Delta t, \Delta h) \in E$, the function :
\begin{align*}
(\tau,\chi) \mapsto \mathcal{S}_{\tau,\chi}(\Delta t, \Delta h)
\end{align*}
is continuous.
\end{prop}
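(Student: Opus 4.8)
The plan is to factor the continuity of $(\tau,\chi)\mapsto \mathcal{S}_{z(\tau,\chi)}(\Delta t,\Delta h)$ through two independent steps: first the continuity of the endpoint $(\tau,\chi)\mapsto z(\tau,\chi)$, and then the continuity of the contour integral (\ref{defdynsin}) as a function of its two endpoints. The composition of the two then gives the proposition.

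First I would check that $(\tau,\chi)\mapsto z(\tau,\chi)$ is continuous on $A$. Since $z(\tau,\chi)$ is the intersection point with positive imaginary part of the circles $C(0,e^{-\tau/2})$ and $C(1,e^{-\tau/4-\chi/2})$, writing $z=x+iy$ the defining equations are $x^2+y^2=e^{-\tau}$ and $(x-1)^2+y^2=e^{-\tau/2-\chi}$. Subtracting them gives
\[
x=\tfrac12\bigl(1+e^{-\tau}-e^{-\tau/2-\chi}\bigr),\qquad y=\sqrt{e^{-\tau}-x^2},
\]
both of which are real-analytic in $(\tau,\chi)$; this is the same data as the critical-point equation $(1-1/z)(1-e^{-\tau}z)=e^{-\tau/2-\chi}$ recorded above, after rescaling the critical points $e^{\tau}z(\tau,\chi)$ by $e^{-\tau}$. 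One then verifies that the defining inequality of $A$ is exactly $e^{-\tau}-x^2>0$, i.e. that the two circles meet in a genuinely non-real conjugate pair. Hence on $A$ we have $y>0$, the selection of the root with positive imaginary part is unambiguous, and $(\tau,\chi)\mapsto z(\tau,\chi)=x+iy$ is continuous on $A$ with $\Im z(\tau,\chi)>0$ throughout.

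Next I would treat the integral as a function of its endpoints. For fixed $(\Delta t,\Delta h)\in E$ the integrand $g(w)=(1-w)^{\Delta t}w^{-\Delta h-\frac{\Delta t}{2}-1}$ is a single fixed function, holomorphic away from $w=0$ and $w=1$ once a branch of the power of $w$ is chosen. For $\Delta t\ge 0$ the factor $(1-w)^{\Delta t}$ is a polynomial and the prescribed path crosses $(0,1)$, so I would place the cut of $w^{-\Delta h-\Delta t/2}$ along $(-\infty,0]$; then $g$ is holomorphic on the simply connected domain $\C\setminus(-\infty,0]$, which contains a path crossing $(0,1)$ together with the endpoints $z(\tau,\chi),\overline{z(\tau,\chi)}$, as these lie off the real axis. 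For $\Delta t<0$ the factor $(1-w)^{\Delta t}$ has a pole at $w=1$ and the path crosses $(-\infty,0)$, so I would instead cut along $[0,+\infty)$, obtaining the simply connected domain $\C\setminus[0,+\infty)$ which again contains a crossing path and both endpoints. On each such domain $g$ has a holomorphic antiderivative $G$, and by simple connectedness the integral is path-independent, so it equals $G(z(\tau,\chi))-G(\overline{z(\tau,\chi)})$, which is holomorphic, in particular continuous, in the endpoints.

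Composing the two steps yields the continuity claimed. The only delicate point is the second one: one must identify, for each sign of $\Delta t$, the branch cut and simply connected domain compatible with the prescribed crossing interval, and confirm that the endpoints remain inside it. This is exactly where the strict inequality $\Im z(\tau,\chi)>0$ from the first step is used: near any fixed point of $A$ the endpoints stay uniformly bounded away from the real axis, hence from the singularities $w=0,1$ and from the chosen cut, so the contour never needs to cross a cut and the representation by a single antiderivative remains valid throughout. This completes the proof of Proposition \ref{propcont}.
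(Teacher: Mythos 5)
Your proof is correct and follows essentially the same route as the paper, which deduces the proposition from the fact that the critical points $e^{\tau}z(\tau,\chi)$ are roots of the quadratic $(1-1/z)(1-e^{-\tau}z)=e^{-\tau/2-\chi}$, whose coefficients depend analytically on $(\tau,\chi)$. Your explicit circle-intersection formulas and the branch-cut/antiderivative argument for endpoint-continuity of the integral (\ref{defdynsin}) simply spell out in full detail the two steps the paper leaves implicit in its one-line justification.
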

\subsubsection{Proof of lemma \ref{lemdecK}}
Let $m \subset E$ be a pattern, of cardinality $l$, let $\mathcal{K} \subset \R^2$ be a compact set and let $(\tau_1,\chi_1), (\tau_2,\chi_2) \in A\cap \mathcal{K}$ be as in the statement of the lemma. The condition (\ref{conddist}) implies that the sets $\frac{1}{r}(\tau_1,\chi_1)+m$ and $\frac{1}{r}(\tau_2,\chi_2)+ m$ are disjoints. By Proposition \ref{prop:covdet}, the covariance
\begin{align*}
\mathrm{cov}_r(c_{ \frac{1}{r}(\tau_1,\chi_1) + m}, c_{m \frac{1}{r} (\tau_2,\chi_2) + m}) = \E_r [c_{ \frac{1}{r}(\tau_1,\chi_1) + m} c_{m \frac{1}{r} (\tau_2,\chi_2) + m}] - \E_r [c_{ \frac{1}{r}(\tau_1,\chi_1) + m} ] \E_r [c_{m \frac{1}{r} (\tau_2,\chi_2) + m}] 
\end{align*}
is a sum of $(2l)! - (l!)^2$ terms, each of them containing a factor of the form
\begin{align} \label{prodK}
\K_{e^{-r}}\left(\frac{1}{r}(\tau_1,\chi_1)+m_{i_1}; \frac{1}{r}(\tau_2,\chi_2)+m_{j_1}\right)\K_{e^{-r}}\left(\frac{1}{r}(\tau_2,\chi_2)+m_{j_2}; \frac{1}{r}(\tau_1,\chi_1)+m_{i_2}\right).
\end{align}
By Propositions \ref{prop:cvpp} and \ref{propcont}, the other factors are bounded by a bound only depending on $\mathcal{K}$ and $m$. By similar methods as in the proof of Proposition \ref{prop:cvpp}, we will show that the product (\ref{prodK}) is small. 

The product (\ref{prodK}) can be written as a quadruple integral 
\begin{multline*} \displaystyle
\K_{e^{-r}}\left(\frac{1}{r}(\tau_1,\chi_1)+m_{i_1}; \frac{1}{r}(\tau_2,\chi_2)+m_{j_1}\right)\K_{e^{-r}}\left(\frac{1}{r}(\tau_2,\chi_2)+m_{j_2}; \frac{1}{r}(\tau_1,\chi_1)+m_{i_2}\right) \\
= \frac{1}{(2i\pi)^4} \int_{z \in (1+\varepsilon)\gamma_{\tau_1}} dz\int_{w \in (1-\varepsilon)\gamma_{\tau_2}} dw\int_{z' \in (1-\varepsilon)\gamma_{\tau_2}}dz' \int_{w' \in (1+\varepsilon)\gamma_{\tau_1}} dw' \\
\frac{\exp\left(\frac{1}{r}\left(S(z;\tau_1,\chi_1)-S(w;\tau_2,\chi_2)+S(z';\tau_2,\chi_2)-S(w';\tau_1,\chi_1)\right)+O(1)\right)}{(z-w)(z'-w')}
\end{multline*}
We first consider the case when $\tau_1 \neq \tau_2$, and by symmetry, we assume that $\tau_1 > \tau_2$. One can then deform the contours as previously. Precisely, we now integrate over 
\begin{align*}
z \in \gamma_{\tau_1}^<, \hspace{0.1cm} w \in \gamma_{\tau_2}^>, \hspace{0.1cm} z' \in \gamma_{\tau_2}^<, \hspace{0.1cm} w' \in \gamma_{\tau_1}^>
\end{align*}
in order to have
\begin{align*}
\mathfrak{R}\left(S(z;\tau_1,\chi_1)-S(w';\tau_1,\chi_1) \right) <0, \quad \text{and} \quad \mathfrak{R}\left(S(z';\tau_2,\chi_2)-S(w;\tau_2,\chi_2) \right) <0,
\end{align*}
see Figure 7.
\begin{figure}[!h] \label{fig2}
\centering
\includegraphics[scale=0.3,clip=true,trim=0cm 0cm 0cm 0cm]{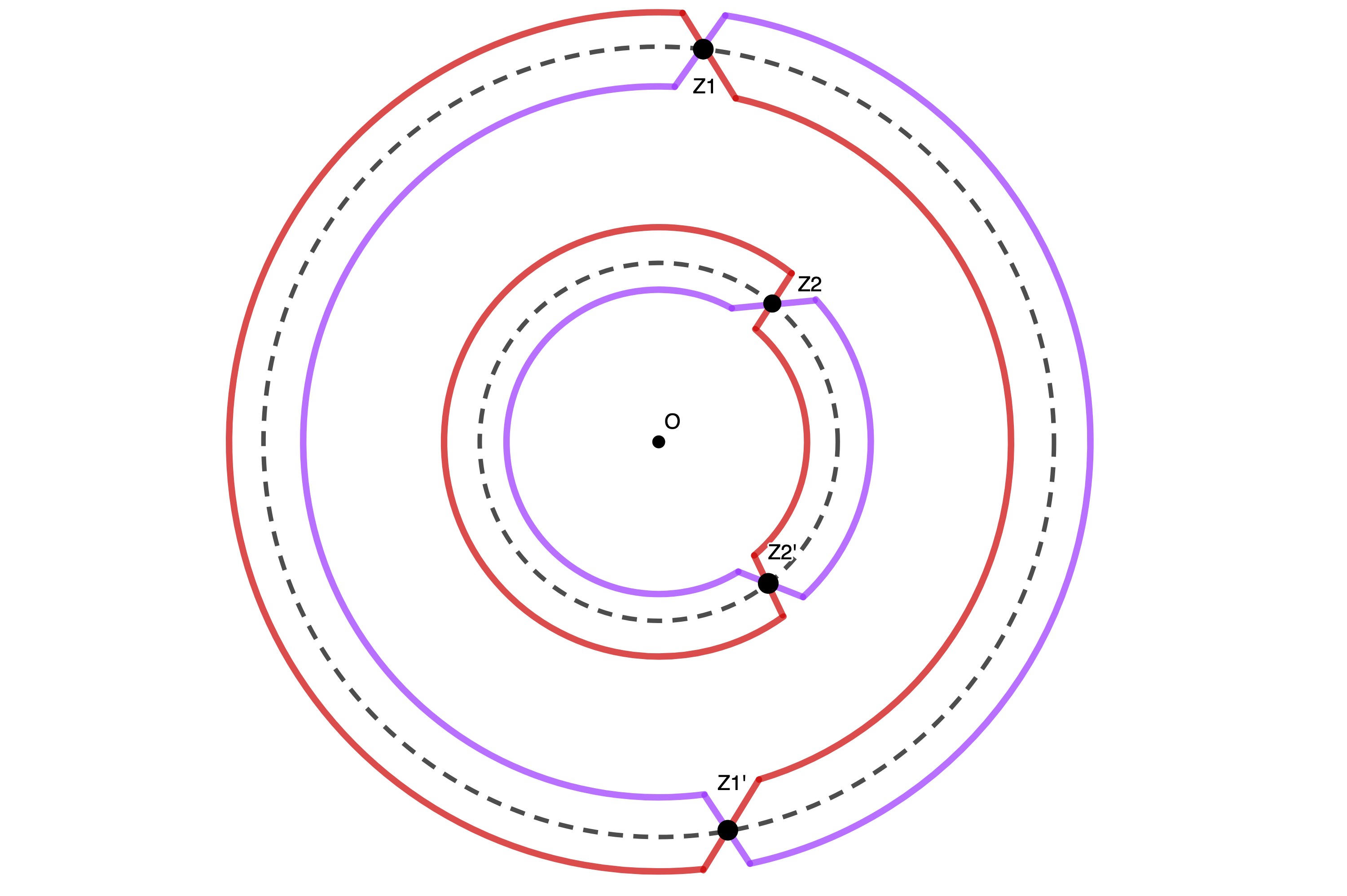}
\caption{The contours $\gamma_{\tau_1}^>$, $\gamma_{\tau_1}^<$ are the thick contours near the dotted circle with the largest radius, the circle $\gamma_{\tau_1}$; the contours $\gamma_{\tau_2}^>$ and $\gamma_{\tau_2}^<$ are the thick contours near the dotted circle with the smallest radius $\gamma_{\tau_2}$.}
\end{figure}
These deformations do not affect the value of the integrals, because they involve separate variables. Since, for all $\delta \in (0,1)$, we have 
\begin{align*}
\frac{\exp\left(\frac{1}{r}\left(S(z;\tau_1,\chi_1)-S(w;\tau_2,\chi_2)+S(z';\tau_2,\chi_2)-S(w';\tau_1,\chi_1)\right)\right)}{\exp(r^{-\delta})} \rightarrow 0
\end{align*}
as $r \rightarrow 0$, for all $z,z',w,w'$ in the new contours except at a finite number of points, and since 
\begin{align*}
\left| \frac{\exp\left(\frac{1}{r}\left(S(z;\tau_1,\chi_1)-S(w;\tau_2,\chi_2)+S(z';\tau_2,\chi_2)-S(w';\tau_1,\chi_1)\right)\right)}{(z-w)(z'-w')} \right| \leq \frac{1}{|e^{\tau_1}-e^{\tau_2}|^2}
\end{align*}
for all $z,z',w,w'$ in the new contours except at a finite number of points, the for the case $\tau_1 >\tau_2$ is complete.\\

For $\tau_1=\tau_2=\tau$, the proof is as follows. One deform the contours as for the preceding case, but now, the deformations affect the value of the kernel since we can avoid the residues at $z=w$ and $z'=w'$. We have for example the following case 
\begin{equation} \label{prodKq2}
\begin{split}
\K&_{e^{-r}}\left(\frac{1}{r}(\tau,\chi_1)+(t_1^1,h_1^1); \frac{1}{r}(\tau,\chi_2)+(t_2^1,h_2^1)_1\right)\K_{e^{-r}}\left(\frac{1}{r}(\tau,\chi_2)+(t_1^2,h_1^2)_2; \frac{1}{r}(\tau,\chi_1)+(t_2^2,h_2^2)_2\right) \\
&=(1+O(1))\left( \frac{1}{(2i\pi)^2}\int_{z \in \gamma_\tau^{<,1}}\int_{w \in \gamma_\tau^{>,2}}... +\frac{1}{2i\pi}\int_w Res_{z=w} f(z,w;\tau,\chi_1,\chi_2)dw \right) \\
&\times
\left(\frac{1}{(2i\pi)^2} \int_{z' \in \gamma_\tau^{<,2}}\int_{w' \in \gamma_\tau^{>,1}}... + \frac{1}{2i\pi}\int_{w'} Res_{z'=w'}f(z',w';\tau,\chi_2,\chi_1)dw' \right),
\end{split}
\end{equation} 
where 
\begin{multline} \label{res}
\int_w Res_{z=w}f(z,w;\tau,\chi_1,\chi_2)dw\\
=\int_{|w|=e^{\tau/2}, \hspace{0.1cm} |\arg(w)|<\phi_{\tau,\chi_2}} \frac{(q^{1/2+\tau/r+t_2}w;q)_\infty}{(q^{1/2+\tau/r+t_1}w;q)_\infty}\frac{dw}{w^{1/r(\chi_1-\chi_2)+h_1^1-h_2^1 + t_1^1-t_2^1}}, \\
\int_{w'} Res_{z'=w'}f(z',w';\tau,\chi_2,\chi_1)dw\\
=\int_{|w'|=e^{\tau/2}, \hspace{0.1cm} |\arg(w')|<\phi_{\tau,\chi_2}} \frac{(q^{1/2+\tau/r+t_2}w';q)_\infty}{(q^{1/2+\tau/r+t_1}w';q)_\infty}\frac{dw'}{w'^{1/r(\chi_2-\chi_1)+ h_1^2-h_2^2 +  t_1^2-t_2^2}},
\end{multline}
the argument $\phi_{\tau,\chi_2}$ being an argument of $z(\tau,\chi_2)$, see Figure 8. Equality (\ref{prodKq2}) is valid when 
\begin{align*}
t_1^1 \geq t_2^1, \hspace{0.1cm} t_1^2 \geq t_2^2, \hspace{0.1cm} \text{and } \arg \left( z(\tau,\chi_1) \right) <\arg \left( z(\tau,\chi_2) \right),
\end{align*}
and the other cases can be treated in a similar way.
\begin{figure}[!h]
\centering
\includegraphics[scale=0.5,clip=true,trim=0cm 0cm 0cm 0cm]{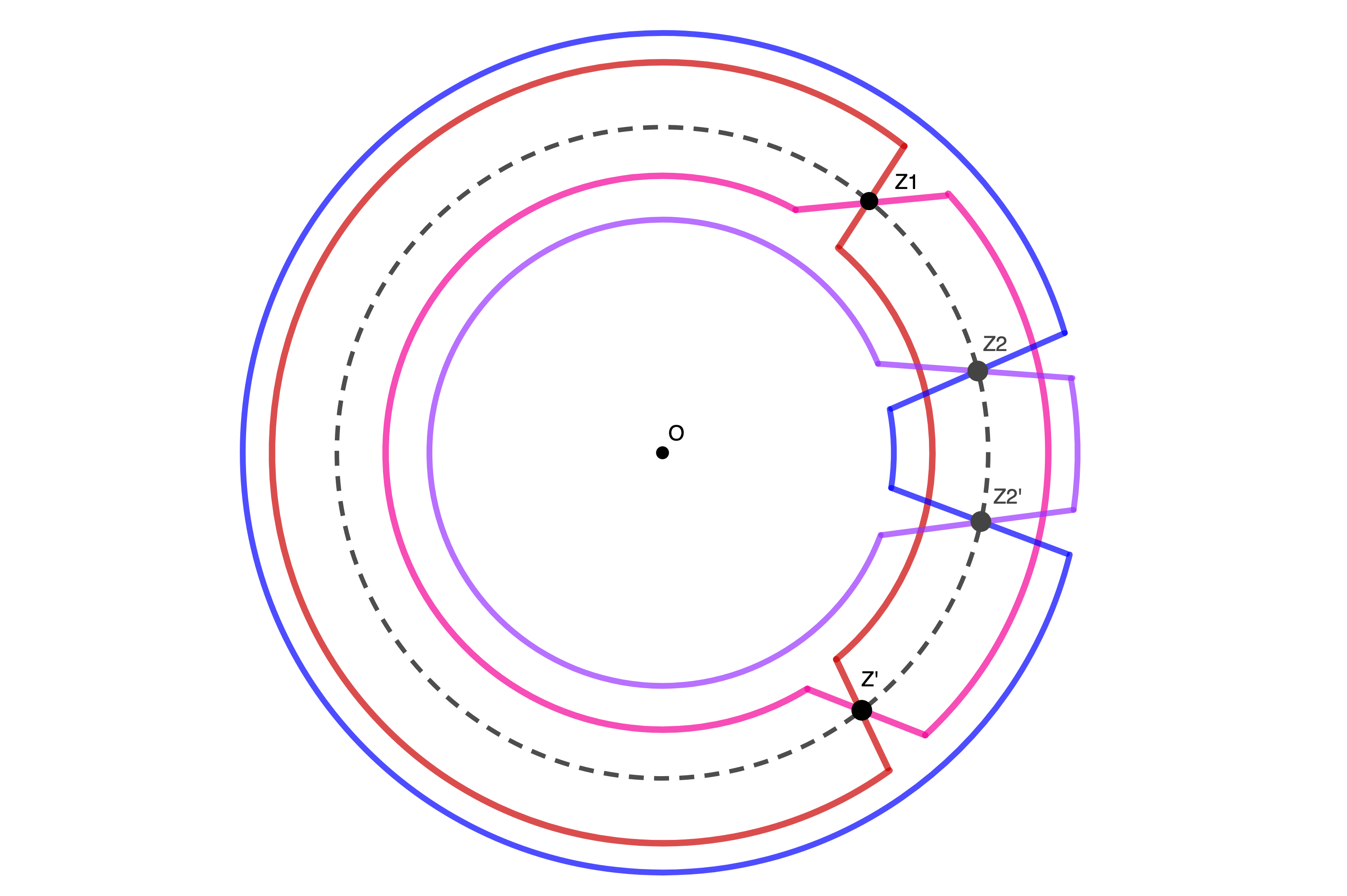}
\caption{The thick contours $\gamma_{\tau}^{>,1}$ and $\gamma_\tau^{<,1}$ cross the dotted circle $\gamma_\tau$ at points $z1=e^\tau z(\tau,\chi_1)$ and $z1'=e^\tau\overline{z(\tau,\chi_1)}$, while the thick contours $\gamma_{\tau}^{>,2}$ and $\gamma_{\tau}^{<,2}$ cross the circle $\gamma_\tau$ at points $z2=e^\tau z(\tau,\chi_2)$ and $z2'=e^\tau \overline{z(\tau,\chi_2)}$.}
\end{figure}
Note that the factor 
\begin{align*}
f_r(w):=\frac{(q^{1/2+\tau/r+t_2}w;q)_\infty}{(q^{1/2+\tau/r+t_1}w;q)_\infty}
\end{align*}
is bounded, since it tends to 
\begin{align*}
(1-e^{-\tau}w)^{t_1-t_2}
\end{align*}
as $r$ tends to $0$. Integrating (\ref{res}) by parts leads to 
\begin{multline} \label{ineqres}
\left| \int_w Res_{z=w}f(z,w;\tau,\chi_1,\chi_2)dw \right| \\
\leq  C \frac{\exp\left(\frac{\tau}{2r}(\chi_2-\chi_1)\right)}{1/r|\chi_1-\chi_2|}
+\frac{\exp\left(\frac{\tau}{2r}(\chi_2-\chi_1)\right)}{1/r|\chi_1-\chi_2|}\left| \int_{|w|=1, \hspace{0.1cm} \arg(w)<\phi_{\tau,\chi_2}}f_r'(e^{-\tau/2}w)\frac{dw}{w^{1/r(\chi_1-\chi_2)+\Delta h + \Delta t-1}}\right| \\
 \leq C\frac{r}{|\chi_1-\chi_2|}\exp \left( \frac{\tau}{2r}(\chi_2-\chi_1)\right),
\end{multline}
and 
\begin{equation} \label{ineq2}
\left| \int_{w'} Res_{z'=w'}f(z',w';\tau,\chi_2,\chi_1)dw \right|  \leq C\frac{r}{|\chi_1-\chi_2|}\exp \left( \frac{\tau}{2r}(\chi_1-\chi_2)\right).
\end{equation}
It is clear that, by construction, we have 
\begin{equation}\label{ineqint}
\left|\int_{z \in \gamma_\tau^{<,1}}\int_{w \in \gamma_\tau^{>,2}}... \right| \leq C \exp \left( \frac{\tau}{2r}(\chi_2-\chi_1)\right),
\end{equation}
and 
\begin{equation}\label{ineqint2}
\left|\int_{z' \in \gamma_\tau^{<,2}}\int_{w' \in \gamma_\tau^{>,1}}... \right| \leq C \exp \left( \frac{\tau}{2r}(\chi_1-\chi_2)\right).
\end{equation}
We now expand the product in (\ref{prodKq2}). The term 
\begin{align*}
\int_{z \in \gamma_\tau^{<,1}}\int_{w \in \gamma_\tau^{>,2}}... \times \int_{z' \in \gamma_\tau^{<,2}}\int_{w' \in \gamma_\tau^{>,1}}...
\end{align*}
is by construction dominated by any polynomial in $r$. The estimates (\ref{ineqres}) and (\ref{ineq2}) imply that the product of the integrals of the residues is smaller than 
\begin{align*}
\frac{Cr^2}{|\chi_1-\chi_2|^2},
\end{align*}
while the combinations of (\ref{ineqres}) and (\ref{ineqint2}), and (\ref{ineq2}) and (\ref{ineqint}) entail that the remaining terms are smaller than 
\begin{align*}
\frac{Cr}{|\chi_1-\chi_2|}.
\end{align*}
Lemma \ref{lemdecK} is proved.
\subsubsection{Proof of Lemma \ref{lemdiag}}
Lemma \ref{lemdiag} is proved using Propositions \ref{prop:cvpp} and \ref{propcont}. Let $\mathcal{K} \subset \R^2$ be a compact set, and let $m,m' \subset E$ be finite. Let $(\tau,\chi) \in A\cap \mathcal{K}$. By Proposition \ref{prop:cvpp}, we have
\begin{multline*}
 \E_r \left[ c_{\frac{1}{r}(\tau,\chi)+m} c_{\frac{1}{r}(\tau,\chi)+m'}\right] - \E_r \left[ c_{\frac{1}{r}(\tau,\chi)+m}\right] \E_r \left[ c_{\frac{1}{r}(\tau,\chi)+m'} \right] \\
 =  \E_{(\tau,\chi)} \left[ c_{m} c_{m'}\right] - \E_{(\tau,\chi)} \left[ c_m\right] \E_{(\tau,\chi)} \left[ c_{m'} \right]+O(r).
\end{multline*} 
Now, by Proposition \ref{propcont}, the function 
\begin{align*}
(\tau,\chi) \mapsto  \E_{(\tau,\chi)} \left[ c_{m} c_{m'}\right] - \E_{(\tau,\chi)} \left[ c_m\right] \E_{(\tau,\chi)} \left[ c_{m'} \right]
\end{align*}
is bounded, as long as $(\tau, \chi)$ belong to a compact set. Lemma \ref{lemdiag} is proved, and Theorem \ref{thm:ppp} is completely proved.

\end{document}